\def\urlfont{\DeclareFontFamily{OT1}{cmtt}{\hyphenchar\font='057}
              \normalfont\ttfamily \hyphenpenalty=10000}
\DeclareFontFamily{OT1}{rsfs10}{}
\DeclareFontShape{OT1}{rsfs10}{m}{n}{ <-> rsfs10 }{}
\DeclareMathAlphabet{\mathscript}{OT1}{rsfs10}{m}{n}
\DeclareMathOperator{\im}{Im}       
\DeclareMathOperator{\Spec}{Spec}   
\DeclareMathOperator{\Hom}{Hom}     
\DeclareMathOperator{\Tors}{Tors}    
\DeclareMathOperator{\Exc}{Exc}     
\DeclareMathOperator{\Pic}{Pic}     
\DeclareMathOperator{\Cl}{Cl}       
\DeclareMathOperator{\Div}{Div}     
\DeclareMathOperator{\Cox}{Cox}     
\DeclareMathOperator{\rk}{rk}       
\DeclareMathOperator{\Supp}{Supp}   
\DeclareMathOperator{\Mov}{Mov}     
\DeclareMathOperator{\Nef}{Nef}     
\DeclareMathOperator{\Eff}{Eff}     
\DeclareMathOperator{\Relint}{Relint}  
\DeclareMathOperator{\codim}{codim} 
\DeclareMathOperator{\Char}{char}   
\def \wrt{with respect to }
\def \d{\delta }
\def \s{\sigma }
\def \Ga{\Gamma }
\def \Si{\Sigma }
\def \g{\gamma}
\def \vf{\varphi}
\def \ve{\varepsilon}
\def \q{\mathbf{q}}
\def \v{\mathbf{v}}
\def \m{\mathbf{m}}
\def \x{\mathbf{x}}
\def \1{\mathbf{1}}
\def \0{\mathbf{0}}
\def\P{{\mathbb{P}}}
\def\p2{\mathbb{P}^2}
\def\p3{\mathbb{P}^3}
\def\p4{\mathbb{P}^4}
\def\cO{\mathcal{O}}
\def\rk{\operatorname{rk}}
\def\Z{\mathbb{Z}}
\def\C{\mathbb{C}}
\def\K{\mathbb{K}}
\def\R{\mathbb{R}}
\def\Q{\mathbb{Q}}
\def\N{\mathbb{N}}
\def\T{\mathbb{T}}
\def\B{\mathcal{B}}
\def\cD{\mathcal{D}}
\def\X{\frak{X}}
\def\cox{\mathcal{C}ox}
\def\irr{\mathcal{I}rr}
\def\cS{\mathcal{S}}
\def\SF{\mathcal{SF}}
\def\G{\mathcal{G}}
\def\I{\mathcal{I}}
\def\Ls{\mathcal{L}}
\def\Ga{\Gamma}
\def\De{\Delta}
\def\sqm{s\,$\Q$m }
\newtheorem{remarks}{Remarks}
\newcommand{\halfline}{\vskip6pt}
\begin{document}

\title{Embedding non-projective Mori Dream Space}

\author{Michele Rossi}

\thanks{The author was partially supported by I.N.D.A.M. (Istituto Nazionale d'Alta Matematica) as a member of the G.N.S.A.G.A. (Gruppo Nazionale per le Strutture Algebriche, Geometriche e loro Applicazioni)}
\titlerunning{Embedding non-projective Mori Dream Spaces}        

\authorrunning{M. Rossi} 

\institute{M. Rossi \at
              Dipartimento di Matematica, Universit\`a di Torino,
via Carlo Alberto 10, 10123 Torino \\
              Tel.: +39 011 670 2813\\
              Fax: +39 011 670 2878\\
              \email{michele.rossi@unito.it}    }

\date{Received: date / Accepted: date}

\maketitle

\begin{abstract}
This paper is devoted to extend some  Hu-Keel results on Mori dream spaces (MDS) beyond the projective setup. Namely, $\Q$-factorial algebraic varieties with finitely generated class group and Cox ring, here called \emph{weak} Mori dream spaces (wMDS), are considered.  Conditions guaranteeing the existence of a neat embedding of a (completion of a) wMDS into a  complete toric variety are studied, showing that, on the one hand, those which are complete and admitting low Picard number are always projective, hence Mori dream spaces in the sense of Hu-Keel. On the other hand, an example of a wMDS that does not admit any neat embedded \emph{sharp} completion (i.e. Picard number preserving) into a  complete toric variety is given, on the contrary of what Hu and Keel exhibited for a MDS.

Moreover, termination of the Mori minimal model program (MMP) for every divisor and a classification of rational contractions for a complete wMDS are studied, obtaining analogous conclusions as for a MDS.

Finally, we give a characterization of wMDS arising from a small $\Q$-factorial modification of a projective weak $\Q$-Fano variety.

\keywords{Mori dream space \and  Cox ring \and  Class group \and  toric varieties \and  Gale duality \and  the secondary fan \and  GKZ decomposition \and  good and geometric quotient \and  fan matrix \and  weight matrix \and  nef cone \and  moving cone \and  pseudo-effective cone \and  Picard number \and  bunch of cones \and  irrelevant ideal and locus \and  completion \and  completion of fans \and  minimal model program \and  small modification \and  rational contraction}
\subclass{14E25 \and 14M25 \and 14C20 \and 14E30 \and 14L24 \and 14L30}
\end{abstract}

\section*{Introduction}

A Mori dream space (MDS) is a projective variety for which Mori's minimal model program (MMP) terminates for every divisor. Historically Mori dream spaces were introduced in the pivotal paper \cite{Hu-Keel} by Hu and Keel, as $\Q $-factorial projective varieties admitting a good Mori chambers decomposition of the Neron-Severi group \cite[Def.~1.10]{Hu-Keel}. This turns out to be equivalent to requiring that a MDS is a $\Q$-factorial projective variety $X$ whose class group $\Cl(X)$ is a finitely generated abelian group and whose Cox ring $\Cox(X)$ is a finitely generated algebra \cite[Prop.~2.9]{Hu-Keel}.

Later many authors studied Mori dream spaces mainly emphasizing their combinatorial properties, rather than their birational behaviour. From this point of view, their projective embedding or completeness or sometimes even their $\Q$-factoriality turned out to be unnecessary hypotheses. In fact, normality and the finite gene\-ra\-tion of both $\Cl(X)$ and $\Cox(X)$ are sufficient hypotheses for obtaining the most interesting combinatorial properties, summarized by a good Mori chambers decomposition and giving rise to the main computational aspects of a MDS: e.g. this is what is assumed to be a  MDS by Hausen and Keicher in their Maple package \texttt{MDSpackage}  \cite[Def.~2.3.2]{KeicherTh}, \cite{MDSpackage}. Such a combinatorial nature of a MDS has been extrapolated by Arzhantsev, Derenthal, Hausen and Laface in their recent book \cite{ADHL} with the concept of a \emph{variety arising from a bunched ring} \cite[\S~3.2]{ADHL}, which can be understood as the combinatorial quintessence of a MDS. It makes then sense asking if and why Hu-Keel results on a MDS can be extended beyond the projective setup.

To avoid confusion with the definition of a MDS given by Hu and Keel,  in this paper a $\Q$-factorial algebraic variety with finite generated $\Cl(X)$ and $\Cox(X)$ will be called a \emph{weak} Mori dream space (wMDS): see Definition~\ref{def:wMDS}. The focus is on a possible extension  to a non-projective setup of
Propositions~1.11 and 2.11 in \cite{Hu-Keel}; namely:
\begin{enumerate}
  \item a MDS $X$ admits a \emph{neat} embedding (see Definition~\ref{def:neat}) into a projective toric variety $Z$ \cite[Prop.~2.11]{Hu-Keel}; moreover, there are as many non isomorphic neat projective embeddings of a MDS as pullbacks of Mori chambers of $Z$ contained in the Nef cone $\Nef(X)$;
  \item Mori's MMP can be carried out for any divisor on a MDS \cite[Prop.~1.11~(1)]{Hu-Keel};
  \item a MDS admits a finite number of \emph{rational contractions} (see Definition~\ref{def:rat-contr}) associated with a fan structure on its Mov cone \cite[Prop.~1.11~(2),(3)]{Hu-Keel}.
\end{enumerate}
Extending (2) and (3) to a wMDS is an application of \cite[Thm.~4.3.3.1]{ADHL}, here recalled by Lemma~\ref{lem:sQm}. Section \S~\ref{sez:birazionale} is entirely devoted to give a proof of (2) and (3), namely in \S~\ref{ssez:MMP} and \S~\ref{ssez:contrraz}, respectively.

The attempt to extend (1) to the case of a wMDS is more interesting. Since in general a wMDS is not a (quasi)-projective algebraic variety, (1) should be understood in terms of a neat embedding of (a suitable closure of) a wMDS into a complete toric variety: this fact does not seem to hold in general! More precisely, section \S~\ref{sez:wMDS} is devoted to developing the following steps:
\begin{itemize}
\item[(a)] describing a canonical neat embedding of a wMDS $X$ into a $\Q$-factorial toric variety $W$; in general, the latter, if exixting, is not even complete;
\item[(b)] finding conditions guaranteeing the existence of a \emph{sharp} completion $Z$ of $W$, where sharp means \emph{Picard number preserving};
\item[(c)] perform a closure of $X$ inside $Z$, when the latter exists, so obtaining a  neat embedding of such a closure into  $Z$; if
    $X$ is a complete wMDS, this gives a neat embedding of $X$ into the  complete toric variety $Z$.
\end{itemize}
Step (a) is known, after the canonical toric embedding of a variety arising from a bunched ring performed in \cite[\S~3.2.5]{ADHL}. Anyway, in the present paper, an alternative proof of this result, for a wMDS endowed with a \emph{Cox basis} (see Definition~\ref{def:Coxgen}) is presented in Theorem~\ref{thm:canonic-emb}, essentially for two reasons: giving an explicit construction of the canonical ambient toric variety $W$, useful for the sequel, and characterizing a wMDS and its canonical toric embedding more algebraically, by means of a suitable presentation of the Cox ring together with the so-called \emph{irrelevant ideal} $\irr(X)$ of $X$, rather than combinatorially by means of properties of the associated bunch of cones.

Step (b) represents the core of the present paper. By Nagata's compactification theorem  e\-ve\-ry algebraic variety can be embedded in a complete algebraic variety \cite[Thm.\,4.3]{Nagata}. Sumihiro proved an equivariant version of this theorem for normal algebraic varieties endowed with an algebraic group action \cite{Sumihiro1},\cite{Sumihiro2}, as toric varieties are. In particular, for toric varieties, it corresponds to a combinatorial completion procedure for fans, as explained by Ewald and Ishida \cite[Thm.~III.2.8]{Ewald96}, \cite{ES}. See also the more recent \cite{Rohrer11} where Rohrer gives a simplified approach to the Ewald-Ishida completion procedure.
In general, all these procedures require the adjunction of some new ray into the fan under completion. This is necessary in dimension $\geq 4$: there are examples of 4-dimensional fans which cannot be completed without the introduction of new rays. Consider the Remark ending up \S~III.3 in \cite{Ewald96} and references therein, for a discussion of this topic; for explicit examples consider \cite[Ex.~2.16]{RT-Pic} and the canonical ambient toric variety here presented in Example~\ref{ex:noncompletabile}.
Adding some new rays necessarily imposes an increasing of the Picard number: we  call \emph{sharp} a completion which does not increase the Picard number. What has been just observed is that, in general, a sharp completion of a  toric variety does not exist. In \cite[Prop.~2.11]{Hu-Keel} Hu and Keel, as already recalled in the previous item (1), showed that the canonical ambient toric variety $W$ of a MDS $X$ admits sharp completions, which are even projective, one for each Mori chamber whose pullback is contained in $\Nef(X)$. This result does no more hold for the ambient toric variety of a wMDS, as Example~\ref{ex:noncompletabile} shows. The key condition should be imposed, in order to guarantee the existence of a sharp completion in the non-projective set up, is the existence of particular cells inside the $\Nef$ cone, called  \emph{filling cells} (see Definition~\ref{def:filling}). This is the content of Theorem~\ref{thm:complete-emb}. Since every Mori chamber, in the sense of Hu-Keel, is a filling cell, such a condition is automatically satisfied when considering a MDS. As a byproduct, directly following from an analogous result for $\Q$-factorial complete toric varieties, jointly obtained with Lea Terracini \cite[Thm.~3.2]{RT-r2proj} and here recalled by Theorem~\ref{thm:r2}, one gets that the Hu-Keel result (1) holds for a complete wMDS with Picard number $r\leq 2$. In fact, every wMDS of this kind is actually a MDS: this is proved in Theorem~\ref{thm:r<3fillable}. The bound on the Picard number can be extended to 3 in the smooth case, as a consequence of Kleinschmidt and Sturmfels results \cite{Kleinschmidt-Sturmfels}.

Step (c) follows from (b), just by taking the closure of $X$ inside a sharp completion $Z$ of the toric ambient variety $W$.

This paper ends up by exhibiting a class of examples of weak Mori dream spaces, namely those varieties which are birational and isomorphic in codimension 1 (i.e. \emph{small $\Q$-factorial modifications}) to a projective weak $\Q$-Fano variety (see Prop.~\ref{prop:wFsQm}). Corollary~\ref{cor:wMDS} and Proposition~\ref{prop:wFsQm} give an extension, beyond the projective setup, of characterizations given by McKernan for smooth Mori dream spaces arising from log Fano
varieties \cite[Lemma~4.10]{McK} and , more recently, by Gongyo, Okawa, Sannai and Takagi for Mori dream spaces arising from Fano type varieties \cite[Thm.~1.1]{GOST}.

\halfline The present paper is organized as follows.

Section \ref{sez:preliminari} is entirely devoted to recall the most part of necessary preliminaries and introduce main notation.

Section \ref{sez:wMDS} is dedicated to study weak Mori dream spaces and their embeddings.
Section~\ref{sez:birazionale} is devoted to discussing the termination of a Mori MMP for every divisor and to classifying rational contractions of a wMDS.

Finally, Section \ref{sez:weakFano} is dedicated to characterizing small $\Q$-factorial modifications of projective weak $\Q$-Fano varieties as those weak Mori dream spaces whose total coordinate space has at most Klt singularities and admitting big and movable anti-canonical class.

\section{Preliminaries and Notation}\label{sez:preliminari}

Throughout the present paper, our ground field will be an algebraic closed field $\K=\overline{\K}$, with $\Char\K=0$.

\subsection{Toric varieties}

Throughout the present paper we will adopt the following definition of a toric variety:
\begin{definition}[Toric variety]\label{def:TV}
  A \emph{toric variety} is a tern $(X,\T,x_0)$ such that:
\begin{itemize}
  \item[(i)] $X$ is an irreducible, normal, $n$-dimensional algebraic variety over $\K$,
  \item[(ii)] $\T\cong(\K^*)^n$ is a $n$-torus freely acting on $X$,
  \item[(iii)] $x_0\in X$ is a special point called the \emph{base point}, such that the orbit map $t\in\T\mapsto t\cdot x_0\in\T\cdot x_0\subseteq X$ is an open embedding.
\end{itemize}
\end{definition}
For standard notation on toric varieties and their defining \emph{fans} we refer to the extensive treatment \cite{CLS}.

\begin{definition}[Morphism of toric varieties]\label{def:TVmorphism} Let $Y$ and $X$ be toric varieties with acting tori $\T_Y$ and $\T_X$ and base points $y_0$ and $x_0$, respectively. A morphism of algebraic varieties $\phi:Y\to X$ is called a \emph{morphism of toric varieties} if
  \begin{itemize}
    \item[(i)] $\phi(y_0)=x_0$\,,
    \item[(ii)] $\phi$ restricts to give a homomorphism of tori $\phi_\T:\T_Y\to\T_X$ by setting $$\phi_\T(t)\cdot x_0=\phi(t\cdot y_0)$$
  \end{itemize}
\end{definition}

The previous conditions (i) and (ii) are equivalent to require that $\phi$ induces a morphism between underling fans, as defined e.g in \cite[\S~3.3]{CLS}.

\subsubsection{List of notation}\label{sssez:lista}
\begin{eqnarray*}
  &M,N,M_{\R},N_{\R}& \text{denote the \emph{group of characters} of $\T$, its dual group}\\
  && \text{and their tensor products with $\R$, respectively;} \\
  &\Si\subseteq \mathfrak{P}(N_{\R})& \text{is the fan defining $X$;}\\
  &&\text{$\mathfrak{P}(N_{\R})$ denotes the power set of $N_{\R}$} \\
  &\Si(i)& \text{is the \emph{$i$--skeleton of $\Si$};}\\
  &\langle\v_1,\ldots,\v_s\rangle\subseteq\N_{\R}& \text{cone generated by $\v_1,\ldots,\v_s\in N_{\R}$;}\\
  && \text{if $s=1$ this cone is called the \emph{ray} generated by $\v_1$;} \\
  &\mathcal{L}(\v_1,\ldots,\v_s)\subseteq N& \text{sublattice spanned by $\v_1,\ldots,\v_s\in N$\,;}\\
\end{eqnarray*}
Let $A\in\mathbf{M}(d,m;\Z)$ be a $d\times m$ integer matrix, then
\begin{eqnarray*}
  &\mathcal{L}_r(A)\subseteq\Z^m& \text{is the sublattice spanned by the rows of $A$;} \\
  &\mathcal{L}_c(A)\subseteq\Z^d& \text{is the sublattice spanned by the columns of $A$;} \\
  &A_I\,,\,A^I& \text{$\forall\,I\subseteq\{1,\ldots,m\}$ the former is the submatrix of $A$ given by}\\
  && \text{the columns indexed by $I$ and the latter is the submatrix}\\
  && \text{of $A$ whose columns are indexed by the complementary }\\
  && \text{subset $\{1,\ldots,m\}\backslash I$;} \\
  &\text{\emph{positive}}& \text{a matrix (vector) whose entries are non-negative.}
\end{eqnarray*}
Given an integer matrix $V=(\v_1,\ldots,\v_{m})\in\mathbf{M}(n,m;\Z)$, then
\begin{eqnarray*}
  &\langle V\rangle=\langle\v_1,\ldots,\v_{m}\rangle\subseteq N_{\R}& \text{cone generated by the columns of $V$;} \\
  &\SF(V)=\SF(\v_1,\ldots,\v_{m})& \text{set of all rational simplicial fans $\Si$ such that}\\
  && \text{$\Sigma(1)=\{\langle\v_1\rangle,\ldots,\langle\v_{m}\rangle\}\subseteq N_{\R}$ and} \\ && \text{$|\Si|=\langle V\rangle$ \cite[Def.~1.3]{RT-LA&GD}.}\\
  & \I_\Si& :=\{I\subseteq\{1,\dots,m\}\,|\,\langle V_I\rangle\in\Si\}\\
  &\G(V)& \text{is a \emph{Gale dual} matrix of $V$ \cite[\S~3.1]{RT-LA&GD};} \\
\end{eqnarray*}

\subsection{$F$ and $W$-matrices}

\begin{definition}[$F$-matrix, Def.~3.10 in \cite{RT-LA&GD}]\label{def:Fmatrice} An \emph{$F$--matrix} is a $n\times m$ matrix  $V$ with integer entries, satisfying the conditions:
\begin{itemize}
\item[(a)] $\rk(V)=n$;
\item[(b)] $V$ is \emph{$F$--complete} i.e. $\langle V\rangle=N_{\R}\cong\R^n$ \cite[Def.~3.4]{RT-LA&GD};
\item[(c)] all the columns of $V$ are non zero;
\item[(d)] if ${\bf  v}$ is a column of $V$, then $V$ does not contain another column of the form $\lambda  {\bf  v}$ where $\lambda>0$ is real number.
\end{itemize}
An $F$--matrix $V$ is called \emph{reduced} if every column of $V$ is composed by coprime entries \cite[Def.~3.13]{RT-LA&GD}.
\end{definition}
The most significant example of a $F$-matrix is given by a matrix $V$ whose columns are    integral vectors generating the rays of the $1$-skeleton $\Sigma(1)$ of a rational complete fan $\Sigma$. In the following $V$ will be called a \emph{fan matrix} of $\Sigma$; when every column of $V$ is composed by coprime entries, it will be called a \emph{reduced fan matrix}. For a detailed definition see
\cite[Def.~1.3]{RT-LA&GD}

\begin{definition}[$W$-matrix, Def.~3.9 in \cite{RT-LA&GD}]\label{def:Wmatrice} A \emph{$W$--matrix} is an $r\times m$ matrix $Q$  with integer entries, satisfying the following conditions:
\begin{itemize}
\item[(a)] $\rk(Q)=r$;
\item[(b)] ${\mathcal L}_r(Q)$ does not have cotorsion in $\Z^{m}$;
\item[(c)] $Q$ is \emph{$W$--positive}, that is, $\mathcal{L}_r(Q)$ admits a basis consisting of positive vectors \cite[Def.~3.4]{RT-LA&GD}.
\item[(d)] Every column of $Q$ is non-zero.
\item[(e)] ${\mathcal L}_r(Q)$   does not contain vectors of the form $(0,\ldots,0,1,0,\ldots,0)$.
\item[(f)]  ${\mathcal L}_r(Q)$ does not contain vectors of the form $(0,a,0,\ldots,0,b,0,\ldots,0)$, with $ab<0$.
\end{itemize}
A $W$--matrix is called \emph{reduced} if $V=\G(Q)$ is a reduced $F$--matrix \cite[Def.~3.14, Thm.~3.15]{RT-LA&GD}
\end{definition}
The most significant example of a $W$-matrix $Q$ is given by a Gale dual matrix of a fan matrix $V$, that is $Q=\G(V)$. In this case $Q$ will also be called a \emph{weight matrix}. If $Q$ is also a reduced $W$-matrix then it is a \emph{reduced weight matrix}.

\subsection{Cox sheaf and algebra of an algebraic variety}\label{ssez:Cox}

For what concerning the present topic we will essentially adopt notation introduced in the extensive book \cite{ADHL}, to which the interested reader is referred for any further detail.

Let $X$ be an irreducible and normal, algebraic variety of dimension $n$ over $\K$. The group of Weil divisors on $X$ is denoted by $\Div(X)$\,: it is the free group generated by prime divisors of $X$. For $D_1,D_2\in\Div(X)$, $D_1\sim D_2$ means that they are linearly equivalent. The subgroup of Weil divisors linearly equivalent to 0 is denoted by $\Div_0(X)\leq\Div(X)$. The quotient group $\Cl(X):=\Div(X)/\Div_0(X)$ is called the \emph{class group}, giving the following short exact sequence of $\Z$-modules
\begin{equation}\label{Wdivisori}
  \xymatrix{0\ar[r]&\Div_0(X)\ar[r]&\Div(X)\ar[r]^-{d_X}&\Cl(X)\ar[r]&0}
\end{equation}
Given a divisor $D\in\Div(X)$, its class $d_X(D)$ is often denoted by $[D]$, when no confusion may arise.

\subsubsection{Assumption}\label{ipotesi} In the following, $\Cl(X)$ is assumed to be a \emph{finitely generated} (f.g.) abelian group of rank $r:=\rk(\Cl(X))$. Then $r$ is called either the \emph{Picard number} or the \emph{rank} of $X$.
Moreover we will assume that every invertible global function is constant i.e.
\begin{equation}\label{costanti}
  H^0(X,\cO_X^*)\cong\K^*\,.
\end{equation}
The latter condition is clearly satisfied when $X$ is complete.

\subsubsection{Choice}\label{ssez:K} Choose a f.g. subgroup $K\leq\Div(X)$ such that
$$\xymatrix{d_K:=d_X|_K:K\ar@{>>}[r]&\Cl(X)}$$
is an \emph{epimorphism}. Then $K$ is a free group of rank $m\geq r$ and (\ref{Wdivisori}) induces the following exact sequence of $\Z$-modules
\begin{equation}\label{Kdivisori}
  \xymatrix{0\ar[r]&K_0\ar[r]&K\ar[r]^-{d_K}&\Cl(X)\ar[r]&0}
\end{equation}
where $K_0:=\Div_0(X)\cap K=\ker(d_K)$.

\begin{definition}[Sheaf of divisorial algebras, Def.~1.3.1.1 in \cite{ADHL}] The \emph{sheaf of divisorial algebras} associated with the subgroup $K\leq\Div(X)$ is the sheaf of $K$-graded $\cO_X$-algebras
\begin{equation*}
  \cS:=\bigoplus_{D\in K}\cS_D\,,\quad \cS_D:=\cO_X(D)\,,
\end{equation*}
where the multiplication in $\cS$ is defined by multiplying homogeneous sections in the field of functions $\K(X)$.
\end{definition}

\subsubsection{Choice}\label{ssez:chi} Choose a character $\chi:K_0\to\K(X)^*$ such that
\begin{equation*}
  \forall\,D\in K_0\quad D=(\chi(D))
\end{equation*}
where $(f)$ denotes the principal divisor defined by the rational function $f\in\K(X)^*$. Consider the ideal sheaf $\I_\chi$ locally defined by sections $1-\chi(D)$ i.e.
\begin{equation*}
  \Ga(U,\I_\chi)=\left((1-\chi(D))|_U\,|\,D\in K_0\right)\subseteq\Ga(U,\cS)\,.
\end{equation*}
This induces the following short exact sequence of $\cO_X$-modules
\begin{equation}\label{Sdivisori}
  \xymatrix{0\ar[r]&\I_\chi\ar[r]&\cS\ar[r]^-{\pi_\chi}&\cS/\I_\chi\ar[r]&0}
\end{equation}

\begin{definition}[Cox sheaf and Cox algebra, Construction~1.4.2.1 in \cite{ADHL}]\label{def:CoxRings}
Keeping in mind the exact sequence (\ref{Sdivisori}), the \emph{Cox sheaf} of $X$, associated with $K$ and $\chi$, is the quotient sheaf $\mathcal{C}ox:=\cS/\I_\chi$ with the $\Cl(X)$-grading
\begin{equation}\label{graduazione}
  \cox:=\bigoplus_{\d\in \Cl(X)}\cox_{\d}\,,\quad \cox_{\d}:=\pi_\chi(\cS_\d)\,\quad\cS_\d:=\left(\bigoplus_{D\in d_K^{-1}(\d)}\cS_D\right)\,.
\end{equation}
Passing to global sections, one gets the following \emph{Cox algebra} (usually called Cox \emph{ring}) of $X$, associated with $K$ and $\chi$,
\begin{equation*}
  \Cox(X):=\cox(X)= \bigoplus_{\d\in \Cl(X)}\Ga(X,\cox_{\d})\,.
\end{equation*}
\end{definition}

\begin{remark}[\cite{ADHL}, Lemma 1.4.3.1]\label{rem:Ichi-omogeneo}
  With respect to the $\Cl(X)$-graded decompositions given in (\ref{graduazione}), the exact sequence (\ref{Sdivisori}) behaves coherently, that is $\I_\chi$ is a $\Cl(X)$-homogeneous sheaf of ideals in $\cS$ admitting a well-defined $\Cl(X)$-graded decomposition $\I_\chi=\bigoplus_{\d\in\Cl(X)}\I_{\chi,\d}$ such that $\cox_\d\cong\cS_\d/\I_{\chi,\d}$.
\end{remark}

\begin{proposition}\label{prop:Ichi}
  For every $D,D'\in K$, $D\sim D'$ if and only if there exists an isomorphism $\psi:H^0(X,\cO_X(D))\stackrel{\cong}{\longrightarrow}H^0(X,\cO_X(D'))$ such that
  \begin{equation*}
    \forall\,f\in H^0(X,\cO_X(D))\quad f-\psi(f)\in H^0(X,\I_\chi)\,.
  \end{equation*}
\end{proposition}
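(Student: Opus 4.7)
For the ``only if'' direction: if $D\sim D'$ then $E:=D-D'\in K_0$, and by the choice of $\chi$ made in \S~\ref{ssez:chi}, the rational function $\chi(E)\in\K(X)^*$ has principal divisor equal to $E$. The plan is to take $\psi$ to be multiplication by $\chi(E)$: a quick divisor check shows this yields an isomorphism $H^0(X,\cO_X(D))\stackrel{\cong}{\longrightarrow} H^0(X,\cO_X(D'))$, with inverse multiplication by $\chi(E)^{-1}=\chi(-E)$ (up to the convention $\chi(0)=1$). Then for every $f\in H^0(X,\cO_X(D))$ one has $f-\psi(f)=f\cdot(1-\chi(E))$, and since $1-\chi(E)$ is by construction a generator of $\I_\chi$ and $H^0(X,\I_\chi)$ is a module over $H^0(X,\cS)$, the product lies in $H^0(X,\I_\chi)$, as required.

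For the ``if'' direction: the key input is the canonical identification $\cS_D\cong\cox_{[D]}$, namely that for every $D\in K$ the composition $\cS_D\hookrightarrow\cS_{[D]}\twoheadrightarrow\cox_{[D]}$ is an isomorphism of $\cO_X$-modules. This follows from the running assumption $H^0(X,\cO_X^*)\cong\K^*$ together with \cite[Prop.~1.4.2.2]{ADHL}. Granting it, I would pick $0\neq f\in H^0(X,\cO_X(D))$ (the case $H^0(X,\cO_X(D))=0$ being handled in a vacuous way by choosing any representative of $D-D'$); by hypothesis $f-\psi(f)\in H^0(X,\I_\chi)$, and Remark~\ref{rem:Ichi-omogeneo} guarantees that $\I_\chi$ is $\Cl(X)$-homogeneous. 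Assume for contradiction $[D]\neq[D']$: since $f\in\cS_{[D]}$ and $\psi(f)\in\cS_{[D']}$, the $[D]$-homogeneous component of $f-\psi(f)$ is $f$ itself, forcing $f\in H^0(X,\I_{\chi,[D]})$. Pushing through the isomorphism $\cS_D\cong\cox_{[D]}$ recalled above then gives $f=0$, contradicting the choice of $f$. Hence $[D]=[D']$ in $\Cl(X)$, i.e.\ $D\sim D'$.

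The main technical obstacle is precisely the injectivity statement $H^0(X,\cS_D)\cap H^0(X,\I_\chi)=0$ for $D\in K$, which is what makes the $[D]$-graded extraction in the ``if'' direction conclusive. Rather than reprove it I would invoke \cite[Prop.~1.4.2.2]{ADHL}; once this is granted, the remainder is routine divisor bookkeeping together with the $\Cl(X)$-homogeneity of $\I_\chi$ established in Remark~\ref{rem:Ichi-omogeneo}.
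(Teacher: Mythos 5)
Your proof follows the same route as the paper: the forward direction is essentially verbatim the paper's argument (multiplication by $\chi(E)$ with inverse multiplication by $\chi(-E)$, and $f-\psi(f)=f\cdot(1-\chi(E))$ landing in $H^0(X,\I_\chi)$), and the backward direction rests on exactly the $\Cl(X)$-homogeneity of $\I_\chi$ from Remark~\ref{rem:Ichi-omogeneo}, which is all the paper itself invokes. If anything, you make explicit the one step the paper leaves implicit, namely that the injectivity $H^0(X,\cS_D)\cap H^0(X,\I_\chi)=0$ is what converts the graded extraction into the contradiction $f=0$.
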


\begin{proof} Assume $D\sim D'$.
  Then $E:=D-D'\in K_0$, meaning that $E=(\chi(E))$. Then define
  \begin{equation*}
    \forall\,f\in H^0(X,\cO_X(D))\quad \psi(f):=f\cdot\chi(E)\,.
  \end{equation*}
  $\psi$ is well defined:
  $$(\psi(f))+D'=(f)+E+D'=(f)+D\geq 0\ \Longrightarrow\ \psi(f)\in H^0(X,\cO_X(D'))\,.$$
  $\psi^{-1}$ is well defined by setting
  \begin{equation*}
    \forall\,g\in H^0(X,\cO_X(D'))\quad \psi^{-1}(g):=g\cdot\chi(-E)\in H^0(X,\cO_X(D))\,.
  \end{equation*}
  Then $\psi$ gives an isomorphism and
  \begin{equation*}
    \forall\,f\in H^0(X,\cO_X(D))\quad f-\psi(f)=f\cdot(1-\chi(E))\in H^0(X,\I_\chi)\,.
  \end{equation*}
  Viceversa, the necessary condition follows from Remark~\ref{rem:Ichi-omogeneo}. In fact $f-\psi(f)\in H^0(X,\I_\chi)$ means that $[D]=[D']$, by the $\Cl(X)$-graded exact sequence (\ref{Sdivisori}).
\end{proof}

\begin{remarks}\label{rems}\hfill\rm{
\begin{enumerate}
  \item \cite[Prop.~1.4.2.2]{ADHL} Depending on choices \ref{ssez:K} and \ref{ssez:chi}, both Cox sheaf and algebra are not canonically defined. Anyway, given two choices $K,\chi$ and $K',\chi'$ there is a graded isomorphism of $\cO_X$-modules $$\cox(K,\chi)\cong\cox(K',\chi')\,.$$
  \item For any open subset $U\subseteq X$, there is a canonical isomorphism
  $$\xymatrix{\Ga(U,\cS)/\Ga(U,\I_\chi)\ar[r]^-{\cong}&\Ga(U,\cox)}\,.$$
  In particular $\Cox(X)\cong H^0(X,\cS)/H^0(X,\I_\chi)$. This fact, jointly with Proposition~\ref{prop:Ichi}, explains why $\I_\chi$ is the right sheaf of ideals giving a precise meaning to the usual ambiguous writing
  $$\Cox(X)\cong \bigoplus_{[D]\in\Cl(X)}H^0(X,\cO_X(D))\,.$$
  \item Choice \ref{ssez:chi} is needed to fixing a unique section $f_D\in\Ga(X,\cS_{-D})$, for each principal divisor $D\in K_0$. Both choices \ref{ssez:K} and \ref{ssez:chi} are unnecessary if a special point $x\in X$ is chosen. In this case both $K$ and $\chi$ can be canonically assigned by setting
      \begin{eqnarray*}
        K&:=&K^x = \{D\in\Div(X)\,|\, x\not\in \Supp(D)\} \\
        \chi&:=&\chi^x:K^x_0 \longrightarrow \K(X)^*\,,\quad D=(\chi^x(D))\,,\,\chi^x(D)(x)=1\,.
      \end{eqnarray*}
      Then $\cox(X,x)$ is canonical \cite[Construction~1.4.2.3]{ADHL}. This is the case e.g. when $X$ is a normal toric variety whose fan is non-degenerate (i.e. $H^0(X,\cO^*_X)\cong\K^*$, as assumed in Assumption~\ref{ipotesi}, display (\ref{costanti})) and $x_0$ is the \emph{base point} fixing the open embedding $\T\to\T\cdot x_0\subseteq X$ of the acting torus $\T=(\K^*)^n$ \cite[Def.~2.1.1.1]{ADHL}.
  \item In case $X$ is a normal toric variety with non-degenerate fan, the canonical choices of $K^{x_0},\chi^{x_0}$ can be furtherly specialized as follows. Let $\Si$ be a fan of $X$ and $x_{\rho}$ the \emph{distinguished point} of a ray $\rho\in\Si(1)$ (for a definition see e.g. \cite[\S~3.2]{CLS}). Let $D_{\rho}$ be the associated torus invariant divisor i.e. $D_{\rho}=\overline{\T\cdot x_{\rho}}\subseteq X$.
      Then
      \begin{equation*}
        \bigcup_{\rho\in\Si(1)}D_{\rho}=X\backslash (\T\cdot x_0)\ \Rightarrow\  K^{x_0}\geq\bigoplus_{\rho\in\Si(1)}\Z\cdot D_{\rho}=:\Div_{\T}(X)\leq \Div(X)
      \end{equation*}
      $\Div_{\T}(X)$ is called the subgroup of \emph{torus invariant Weil divisors} of $X$. It is a well known fact that every class in $\Cl(X)$ admits a tours invariant representative (see e.g. \cite[Thm.~4.1.3]{CLS}), giving a surjection $$\xymatrix{d_X|_{\Div_{\T}(X)}:\Div_{\T}(X)\ar@{->>}[r]&\Cl(X)}\,.$$
      Then we get the canonical choice $K=\Div_{\T}(X)$. In this case $K_0=\ker(d_X)\cap\Div_{\T}$ is the subgroup of torus invariant principal divisors, which are principal divisors $D$ admitting, as a defining function, a Laurent monomial associated with an exponent $\m\in M=\Hom(\T,\Z)$ and well defined up to a factor $k\in H^0(X,\cO_X^*)\cong\K^*$. Namely, in Cox coordinates
      \begin{equation*}
      D=\left(k\prod_{j=1}^{|\Si(1)|}x_j^{<\m,\v_j>}\right)
      \end{equation*}
 where $\v_j$ is a primitive generator of the ray $\langle\v_j\rangle\in \Si(1)$ i.e. a column of a fan matrix $V$ of $X$. Normalizing $k$ on the base point $x_0$ of $X$, one gets the natural choice:
\begin{equation*}
        \xymatrix{\chi:=\chi^{x_0}|_{K_0}: D\in K_0\ar@{|->}[r]&\x^\m:=\prod_{j=1}^{|\Si(1)|}x_j^{<\m,\v_j>}\in\K(X)^*}
\end{equation*}
Then $\cS=\bigoplus_{D\in \Div_{\T}(X)}\cO_X(D)$ and $\I_\chi(U)=(\{(1-\x^\m)|_U\,|\,\m\in M\})$, for every open subset $U\subseteq X$.\\
      Moreover the exact sequence (\ref{Kdivisori}) gives rise, under our hypothesis, to the following well known exact sequence of abelian groups
      \begin{equation}\label{Tdivisori}
  \xymatrix{0\ar[r]&M\ar[r]^-{div_X}_-{V^T}&K=\Div_\T(X)\ar[r]^-{d_K}_-{Q\oplus\Ga}&\Cl(X)\ar[r]&0}
\end{equation}
      where $V$ and $Q=\G(V)$ are a fan matrix and a weight matrix, respectively, of $X$, as defined after Definitions \ref{def:Fmatrice} and \ref{def:Wmatrice}:
      \begin{itemize}
        \item  the  transposed $V^T$ represents the morphism $div_X$ assigning to a character $\m\in M$ the associated principal torus invariant divisor $div_X(\m)=(\x^\m)$, with respect to a suitable basis of $M$ and the basis of torus invariant divisors $\{D_\rho\}_{\rho\in \Si(1)}$; if $X$ is complete then $V$ is an $F$-matrix;
        \item $Q$ is a Gale dual matrix to $V$, and representing, together with a \emph{torsion matrix} $\Ga$, the morphism $d_K$ assigning a linear equivalence class $d_K(D)=[D]$ to every torus invariant divisor $D\in \Div_\T(X)$, with respect to the basis $\{D_\rho\}$ and a suitable choice of generators of $\Cl(X)$: namely, given a decomposition $\Cl(X)=F\oplus\Tors(X)$, where $\Tors(X)$ is the canonical torsion subgroup of $\Cl(X)$ and $F$ a free part, also the class morphism splits as
            \begin{equation*}
              d_K=f_K\oplus\tau_K\quad\text{with}\quad d_K:\Div_\T(X)\to F\ ,\ \tau_K:\Div_\T(X)\to\Tors(X)
            \end{equation*}
            and $Q$ and $\Ga$ are their representative matrices; if $X$ is complete then $Q$ is a $W$-matrix.
      \end{itemize}
      Therefore, the kernel $K_0$ admits the following interpretation
      \begin{equation*}
        K_0=\im(div_X)=\{V^T\m\,|\,\m\in M\}\,\ \text{and}\ \xymatrix{\chi:V^T\m \in K_0\ar@{|->}[r]&\x^{\m}\in\K(X)^*}\,.
      \end{equation*}
\end{enumerate}}
\end{remarks}

\begin{proposition}\label{prop:Coxtorico}
  The Cox algebra of a toric variety without torus factor $X(\Si)$ is a polynomial algebra formally generated by the rays of its 1-scheleton $\Si(1)$, that is
\begin{eqnarray*}
  \Cox(X)&\cong&\frac{\bigoplus_{D\in\Div_\T(X)}H^0(X,\cO_X(D))}{H^0(X,\I_\chi)}\\
  &\cong&\bigoplus_{\d\in\Cl(X)}\left(\bigoplus_{D\in\d\,,\,D\geq 0}\K\cdot\x^D\right)\ \cong\ \K\left[\{x_\rho\}_{\rho\in\Si(1)}\right]
\end{eqnarray*}
\end{proposition}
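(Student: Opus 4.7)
The plan is to work through the $\Cl(X)$-graded decomposition (\ref{graduazione}) of $\cox$ via the description $\Cox(X) \cong H^0(X,\cS)/H^0(X,\I_\chi)$ of Remark~\ref{rems}(2), applied to the canonical toric setup of Remark~\ref{rems}(4). That is, take $K = \Div_\T(X)$ and $\chi(V^T\m) = \x^\m$, so that $\cS = \bigoplus_{D \in \Div_\T(X)} \cO_X(D)$, and for $D = \sum_\rho a_\rho D_\rho$ the classical polytope description gives
\begin{equation*}
H^0(X, \cO_X(D)) = \bigoplus_{\substack{\m\in M\\ \langle\m,\v_\rho\rangle + a_\rho \geq 0\ \forall\rho}} \K \cdot \x^\m\,,
\end{equation*}
each character $\x^\m$ corresponding to the effective torus-invariant divisor $D+V^T\m \in [D]$.

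Fix a class $\d \in \Cl(X)$ and, using surjectivity of $d_K$ in (\ref{Tdivisori}), pick $D_0 \in \d \cap \Div_\T(X)$. For every $D \in \d \cap \Div_\T(X)$ Proposition~\ref{prop:Ichi} yields an isomorphism $\psi_{D,D_0} : H^0(X,\cO_X(D)) \to H^0(X,\cO_X(D_0))$, namely $f \mapsto f \cdot \chi(D-D_0)$. Assembling these produces a surjective $\K$-linear map $\Ga(X, \cS_\d) \twoheadrightarrow H^0(X, \cO_X(D_0))$. I would then show its kernel equals $\Ga(X, \I_{\chi,\d})$: the inclusion $\subseteq$ is direct from the definition of $\psi$, and the reverse inclusion uses that $\I_\chi$ is locally generated by elements $1 - \chi(E)$ with $E \in K_0$, so every homogeneous element of $\Ga(X, \I_{\chi,\d})$ decomposes into pieces of the form $f(1-\chi(E))$ on which the assembled map vanishes. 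This identifies $\Cox(X)_\d \cong H^0(X, \cO_X(D_0))$, whose character basis is in bijection, via $\m \mapsto D_0 + V^T\m$, with the effective torus-invariant representatives $D\in\d$. Assigning to each such $D = \sum a_\rho D_\rho$ the formal monomial $\x^D := \prod_{\rho\in\Si(1)} x_\rho^{a_\rho}$ yields the second claimed isomorphism.

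Summing over $\d \in \Cl(X)$ sweeps over all effective torus-invariant divisors of $X$, and since divisor addition corresponds to monomial multiplication under $D \mapsto \x^D$, the graded pieces assemble into the polynomial algebra $\K[\{x_\rho\}_{\rho \in \Si(1)}]$. The main obstacle is the kernel computation in the middle step; here the hypothesis that $X$ has no torus factor plays its role, being equivalent to injectivity of $V^T : M \to \Div_\T(X)$ in (\ref{Tdivisori}). This injectivity guarantees that distinct effective representatives $D \neq D' \in \d$ correspond to distinct characters in $H^0(X, \cO_X(D_0))$, so they remain $\K$-linearly independent modulo $\I_\chi$ and no collapse occurs in the quotient.
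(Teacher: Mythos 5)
Your argument is correct, but there is nothing in the paper to compare it against: Proposition~\ref{prop:Coxtorico} is stated without proof, and the Remark immediately following it simply attributes the result to Batyrev--Mel'nikov and to Cox's quotient construction. What you have written is therefore a self-contained derivation along the classical lines, and a natural one here because it reuses exactly the machinery the paper sets up in \S~\ref{ssez:Cox}: the canonical choices $K=\Div_\T(X)$ and $\chi(V^T\m)=\x^\m$ of Remark~\ref{rems}~(4), the identification $\Cox(X)\cong H^0(X,\cS)/H^0(X,\I_\chi)$ of Remark~\ref{rems}~(2), and Proposition~\ref{prop:Ichi} for the isomorphisms $\psi_{D,D_0}$. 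Two refinements. First, in the kernel computation the inclusion of $\Ga(X,\I_{\chi,\d})$ into the kernel is cleanest if you observe that your assembled map is the global-sections level of a sheaf morphism $\cS_\d\to\cO_X(D_0)$ that kills the local generators $f(1-\chi(E))$ and hence the whole subsheaf $\I_{\chi,\d}$; a global section of $\I_{\chi,\d}$ need not decompose globally into such pieces, so arguing only with global decompositions leaves a small gap there. The opposite inclusion is where the explicit global identity $(f_D)_D=\sum_D\bigl(f_D-\psi_{D,D_0}(f_D)\bigr)$ does the work, each summand lying in $\Ga(X,\I_\chi)$. Second, the no-torus-factor hypothesis is not really used in the kernel computation itself, but exactly where you then say it is: injectivity of $V^T$ in (\ref{Tdivisori}) makes $\m\mapsto D_0+V^T\m$ a bijection between the character basis of $H^0(X,\cO_X(D_0))$ and the effective torus-invariant divisors in $\d$, which is what the middle term of the statement and the monomial count in $\K\left[\{x_\rho\}_{\rho\in\Si(1)}\right]$ require. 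With these points made precise, your proof is complete and consistent with the references the paper cites.
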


\begin{remark}
  The previous Proposition \ref{prop:Coxtorico} was proved earlier by Batyrev and Mel'nikov \cite{Batyrev-Melnikov} and it is clearly implicit in the Cox's quotient construction given in \cite{Cox}. Proposition~\ref{prop:Coxtorico} admits also a converse statement, so giving an ``if and only if'' characterization when:
  \begin{itemize}
    \item $X$ is smooth with $\Cl(X)$ f.g. \cite[Thm.~3.1]{Jaczewski}, \cite[Cor.~2.10]{Hu-Keel},
    \item $X$ is normal and projective with a lattice contained in $\Pic(X)$ \cite[Thm.~1.5]{Kedzierski-Wisniewski}
    \item see also the recent \cite[Thm.~1.2]{B&McK&S&Z} for a log relative version with $(X,\Delta)$ a toric log pair admitting log canonical singularities such that $-(K_X+\De)$ is nef and $\De$ admits a \emph{decomposition of complexity less than 1} \cite[Def.~1.1]{B&McK&S&Z}.
  \end{itemize}
\end{remark}

\section{Weak Mori dream spaces (wMDS) and their embedding}\label{sez:wMDS}

In the literature Mori dream spaces (MDS) come with a required projective embedding essentially for their optimal behavior with respect to the termination of Mori program. Actually this assumption is not necessary to obtain main properties of MDS, like e.g. their toric embedding, chamber decomposition of their moving and pseudo-effective cones and even termination of Mori program, for what this fact could mean for a  non-projective algebraic variety.

\begin{definition}[wMDS]\label{def:wMDS} An irreducible and \emph{$\Q$-factorial} algebraic va\-rie\-ty $X$ (i.e. normal and such that a suitable integer multiple of a Weil divisor is a Cartier divisor) sa\-ti\-sfying assumption~\ref{ipotesi} is called a \emph{weak Mori dream space} (wMDS) if $\Cox(X)$ is a finitely generated $\K$-algebra. A projective (hence complete) wMDS is called a \emph{Mori dream space} (MDS).
\end{definition}

\subsection{Total coordinate and characteristic spaces}\label{ssez:quotient}

Let us open the present section with the following

\begin{remarks}\label{rems:wMDS}\hfill\rm{
\begin{enumerate}
  \item The Cox sheaf $\cox=\cS/\I_\chi$ of a wMDS $X$ is \emph{locally of finite type}, that is there exists a finite affine covering $\bigcup_iU_i=X$ such that $\cox(U_i)$ are finitely generated $\K$-algebras \cite[Propositions~1.6.1.2, 1.6.1.4]{ADHL}.
  \item As a consequence of the previous part (1), the \emph{relative spectrum} of $\cox$ \cite[Ex.~II.5.17]{Hartshorne},
      \begin{equation}\label{relspectra}
        \widehat{X}=\Spec_X(\cox)\stackrel{p_X}{\longrightarrow} X
      \end{equation}
        is an irreducible normal and quasi-affine variety, coming with an actions of the quasi-torus $G:=\Hom(\Cl(X),\K^*)$, whose quotient map is realized  by the canonical morphism in (\ref{relspectra}) \cite[\S~1.3.2 , Construction~1.6.1.5]{ADHL}.
   \item Consider $\overline{X}:=\Spec(\Cox(X))$ which is an irreducible and normal, affine variety. Then there exists an open embedding
       \begin{equation*}
          j_X:\widehat{X}\hookrightarrow\overline{X}
       \end{equation*}
       The action of the quasi-torus $G$ extends to $\overline{X}$ in such a way that $j_X$ turns out to be equivariant.
   \item Since $\Cox(X)$ is a finitely generated $\K$-algebra, up to the choice of a set of generators $\frak{X}=(x_1,\ldots,x_m)$, we get
       \begin{equation*}
         \Cox(X)\cong\K[\X]/I
       \end{equation*}
       being $I\subseteq\K[\frak{X}]:=\K[x_1,\ldots,x_m]$ a suitable ideal of relations. Calling $\overline{W}:=\Spec\K[\X]\cong\K^m$, the canonical surjection
       \begin{equation}\label{can-surj}
         \xymatrix{\pi_\X:\K[\X]\ar@{->>}[r]&\Cox(X)}
       \end{equation}
       gives rise to a closed embedding $\overline{i}:\overline{X}\hookrightarrow\overline{W}\cong\K^m$, depending on the choice of $(K,\chi,\X)$.
\end{enumerate}}
\end{remarks}

In the following definitions we will consider a quasi-torus $G$ acting on an irreducible and normal algebraic variety $Y$.

\begin{definition}[Good and geometric quotients]\label{def:quozienti} A surjective morphism $p:Y\twoheadrightarrow X$ is called a \emph{good quotient} for the $G$-action on $Y$ if\begin{itemize}
    \item[(i)] $p$ is \emph{affine}, that is $p^{-1}(U)\subseteq Y$ is affine for every open affine subset $U\subseteq X$,
    \item[(ii)] $p$ is \emph{$G$-invariant}, that is $p$ is constant along every $G$-orbit,
    \item[(iii)] the pull back $p^*:\cO_X\stackrel{\cong}{\longrightarrow} p_*\cO_Y^G$ is an isomorphism.
  \end{itemize}
  A good quotient $p:Y\twoheadrightarrow X$ is called a \emph{geometric quotient} if its fibers are \emph{precisely} the $G$-orbits.
\end{definition}
For ease, in the definition above, we required $p$ to be surjective: actually this is an overabundant hypothesis \cite[Cor.~1.2.3.7]{ADHL}.

\begin{definition}[$1$-free action]\label{def:1-free} A $G$-action on $Y$ is called \emph{free in codimension 1}, or simply \emph{$1$-free}, if it induces a good quotient $p:Y\twoheadrightarrow X$ and there exists an open subset $V\subseteq Y$ such that
\begin{itemize}
  \item[(i)] the complement $Y\backslash V$ has codimension greater than or equal to 2,
  \item[(ii)] $G$ acts freely on $V$,
  \item[(iii)] for every $x$ in $V$ the orbit $G\cdot x$ is closed in $Y$.
\end{itemize}
\end{definition}

\begin{definition}[Stable, semi-stable and unstable loci] Consider a $G$-action on $Y$. The subset $Y^{ss}\subseteq Y$ of \emph{semi-stable points}, with respect to the given $G$-action, is the greatest subset of $Y$ such that $p|_{Y^{ss}}:Y^{ss}\twoheadrightarrow p(Y^{ss})$ is a good quotient. The subset $Y^s\subseteq Y^{ss}$ of \emph{stable points} is the greatest subset of $Y$ such that $p|_{Y^{s}}:Y^{s}\twoheadrightarrow p(Y^{s})$ is a geometric quotient. The subset of \emph{unstable points} is the complement $Y^{nss}:=Y\setminus Y^{ss}$.
\end{definition}

\begin{theorem}[Cox Theorem for a wMDS]\label{thm:K-emb} Let $X$ be a wMDS and consider the quasi-torus action of $G=\Hom(\Cl(X),\K^*)$ on the affine va\-rie\-ty $\overline{X}=\Spec(\Cox(X))$. Then $j_X(\widehat{X})=\overline{X}^s=\overline{X}^{ss}$, giving rise to a $1$-free, geometric quotient $p_X:\widehat{X}\twoheadrightarrow X$ such that $$(p_X)_*(\cO_{\widehat{X}})\cong\cox\quad,\quad(p_X)^*:\cO_X\stackrel{\cong}{\longrightarrow} \cox^G:=(p_X)_*\cO_{\widehat{X}}^G\,.$$
\end{theorem}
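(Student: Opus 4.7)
\bigskip

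\noindent\textbf{Proof proposal.} The strategy is to closely follow the line of reasoning worked out in \cite{ADHL}, Chapter~1 (in particular Constructions~1.3.2.1, 1.6.3.1 and Theorem~1.6.4.3), adapting it to our notation and making crucial use of the $\Q$-factoriality and Assumption~\ref{ipotesi}. The argument splits naturally into three blocks.

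\medskip

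\noindent\emph{Block 1: the good-quotient structure on $\widehat{X}$.} By Remark~\ref{rems:wMDS}(1), the Cox sheaf $\cox$ is locally of finite type, so the relative spectrum $\widehat{X}=\Spec_X(\cox)$ is a well-defined irreducible normal variety and the structure morphism $p_X:\widehat{X}\to X$ is, by construction, affine with $(p_X)_\ast \cO_{\widehat{X}}\cong \cox$. The $\Cl(X)$-grading (\ref{graduazione}) produces an algebraic action of the quasi-torus $G=\Hom(\Cl(X),\K^\ast)$ on $\widehat{X}$, for which $p_X$ is $G$-invariant. The degree-$\zero$ component of $\cox$ coincides with $\pi_\chi(\cS_{\zero})\cong \cO_X$, since $\I_\chi$ is $\Cl(X)$-homogeneous (Remark~\ref{rem:Ichi-omogeneo}) and $\cS_{\zero}=\cO_X$; this gives the isomorphism $(p_X)^\ast:\cO_X\stackrel{\cong}{\to}\cox^G$ and shows that $p_X$ is a good quotient in the sense of Definition~\ref{def:quozienti}. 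That $\widehat{X}$ is quasi-affine is then a consequence of finite generation of $\Cox(X)$ together with the local affineness of the structure morphism.

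\medskip

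\noindent\emph{Block 2: the identification with the semistable locus.} Using Remark~\ref{rems:wMDS}(3)--(4), the surjection (\ref{can-surj}) and the resulting $G$-equivariant open embedding $j_X:\widehat{X}\hookrightarrow \overline{X}=\Spec(\Cox(X))$, one inclusion $j_X(\widehat{X})\subseteq \overline{X}^{ss}$ is automatic, because Block~1 exhibits $\widehat{X}$ as a $G$-invariant open admitting a good quotient. For the reverse inclusion one uses maximality: if $U\subseteq \overline{X}$ is any $G$-saturated open subset admitting a good quotient $q:U\twoheadrightarrow Y$, then the $\Cl(X)$-graded presentation of $\Cox(X)$ and the universal property of $\widehat{X}$ as the relative spectrum of the Cox sheaf provide a canonical factorization of $q$ through $p_X$, forcing $U\subseteq j_X(\widehat{X})$. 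This yields $j_X(\widehat{X})=\overline{X}^{ss}$.

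\medskip

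\noindent\emph{Block 3: geometric quotient and $1$-freeness via $\Q$-factoriality.} This is where the hypothesis that $X$ be $\Q$-factorial enters decisively, and it is the step I expect to be the main obstacle, because it requires a careful comparison between the orbit structure of $G$ on $\widehat{X}$ and the local class-group data on $X$. The plan is to pick the smooth locus $X_0\subseteq X$, whose complement has codimension $\geq 2$ by normality, and to set $V:=p_X^{-1}(X_0)$. On $X_0$ every Weil divisor is Cartier, so every homogeneous component $\cox_{\d}|_{X_0}$ is an invertible $\cO_{X_0}$-module and the local sections of $\cox$ provide, fibrewise over $X_0$, a faithful representation of the character group $\Cl(X)$ of $G$. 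This forces the $G$-isotropy at every point of $V$ to be trivial, so $G$ acts freely on $V$, and closedness of the orbits $G\cdot x$ for $x\in V$ follows from the good-quotient property of Block~1 restricted to $V$ (the stabilizer being trivial and the quotient separated). This verifies Definition~\ref{def:1-free}, hence $1$-freeness. The $\Q$-factoriality of $X$ then implies that on all of $\widehat{X}$ the $G$-isotropies are finite and the orbits are closed, upgrading the good quotient $p_X$ to a geometric one; equivalently $\overline{X}^s=\overline{X}^{ss}=j_X(\widehat{X})$, which closes the proof.
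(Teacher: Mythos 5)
Your overall architecture matches the paper's proof, which likewise obtains the good quotient from the relative spectrum construction, identifies $j_X(\widehat{X})$ with the (semi)stable locus, invokes $\Q$-factoriality to upgrade to a geometric quotient, and gets $1$-freeness from freeness over the smooth locus. Block 1 is fine (modulo the small slip that $\cS_{0}=\bigoplus_{D\in K_0}\cO_X(D)$ is not literally $\cO_X$; it is the quotient $\cox_0=\cS_0/\I_{\chi,0}$ that is canonically $\cO_X$, via multiplication by the $\chi(D)$). But the two steps where the real content lives are not actually carried out.

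First, your Block 2 reverse inclusion does not work as stated. It is false that every $G$-saturated open $U\subseteq\overline{X}$ admitting a good quotient must be contained in $j_X(\widehat{X})$: in the toric/MDS picture the various chambers of the GKZ/GIT decomposition produce many distinct such opens $\overline{X}\setminus B$, corresponding to different irrelevant loci, and they are not contained in one another. The universal property of the relative spectrum governs affine morphisms over $X$, not arbitrary good quotients of opens of $\overline{X}$, so the claimed ``canonical factorization through $p_X$'' is unavailable. The equality $j_X(\widehat{X})=\overline{X}^{ss}=\overline{X}^{s}$ has to be taken in the precise sense of the semistability notion attached to the characteristic space (the paper leans on \cite[\S~1.6.1, \S~1.6.3]{ADHL} and, implicitly, on \cite[Prop.~2.9]{Hu-Keel} here). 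Second, in Block 3 you assert that $\Q$-factoriality forces finite isotropies and closed orbits on all of $\widehat{X}$, ``upgrading'' the good quotient to a geometric one — but no argument is given, and this is exactly the nontrivial implication the paper outsources to \cite[Cor.~1.6.2.7(ii)]{ADHL}; as written you have restated the conclusion rather than proved it. Relatedly, for $1$-freeness you must verify that $\widehat{X}\setminus p_X^{-1}(X_{\mathrm{reg}})$ has codimension $\geq 2$ \emph{in} $\widehat{X}$, not merely that $X\setminus X_{\mathrm{reg}}$ has codimension $\geq 2$ in $X$; preimages under a morphism do not automatically preserve codimension, and the paper cites \cite[Prop.~1.6.1.6(ii)]{ADHL} precisely for this point. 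Your fibrewise-faithfulness argument for freeness of the $G$-action over the smooth locus is, by contrast, essentially the content of \cite[Prop.~1.6.1.6(i)]{ADHL} and is sound.
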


\begin{proof} $p_X$ gives a good quotient, with $(p_X)_*(\cO_{\widehat{X}})\cong\cox$, by the relative spectrum  construction of $\widehat{X}$, as explained in Remarks~\ref{rems:wMDS}~(2),~(3). In particular this gives $\overline{X}^{ss}=j_X(\widehat{X})$ and condition (iii) in Definition~\ref{def:quozienti} implies that
$$\xymatrix{(p_X)^*:\cO_X\ar[r]^-\cong & (p_X)_*(\cO_{\widehat{X}}^G)=:\cox^G}\,.$$
Since $X$ is a wMDS, it is $\Q$-factorial, hence $p_X$ turns out to give a geometric quotient by \cite[Cor.~1.6.2.7(ii)]{ADHL}. Then $\overline{X}^s=j_X(\widehat{X})$. Finally \cite[Prop.~1.6.1.6(i)]{ADHL} proves that $G$ acts freely on $p_X^{-1}(X_{reg})$ where $X_{reg}\subseteq X$ is the open subset of smooth points. Since $X$ is normal, $\codim_X(X\setminus X_{reg})\geq 2$ and \cite[Prop.~1.6.1.6(ii)]{ADHL} proves that $\codim_{\widehat{X}}(\widehat{X}\setminus p_X^{-1}(X_{reg}))\geq 2$. This is enough to conclude that the $G$-action on $\widehat{X}$ is 1-free as $p_X$ is a geometric quotient.
\end{proof}

\subsubsection{Nomenclature}\label{sssez:nomenclatura}\hfill
\begin{enumerate}
  \item The relative spectrum $\widehat{X}$ introduced in Remark~\ref{rems:wMDS}~(2) is called the \emph{characteristic spaces} of $X$.
  \item The quasi-torus $G$ introduced in Remark~\ref{rems:wMDS}~(2) is called a \emph{characteristic quasi-torus} of the wMDS $X$. Notice that if $\Cl(X)$ is torsion free then $G$ is actually a torus.
  \item The spectrum $\overline{X}$ introduced in Remark~\ref{rems:wMDS}~(3) is called the \emph{total coordinate spaces} of $X$.
\end{enumerate}

\subsection{Irrelevant loci and ideals}\label{ssez:Irr}

Recall Remarks~\ref{rems:wMDS}~(3),~(4), the open embedding $j_X$ of the characteristic space of a wMDS into its total coordinate space and the surjection $\pi_\X$, defined in (\ref{can-surj}), associated with the choice of a finite set of generators of the Cox ring.

\begin{definition}[Irrelevant loci and ideals]\label{def:irr}
  Let $X$ be a wMDS. The \emph{irrelevant locus} of a total coordinate space $\overline{X}$ of $X$ is the Zariski closed subset given by the complement $B_X:=\overline{X}\setminus j_X(\widehat{X})$. Since $\overline{X}$ is affine, the irrelevant locus $B_X$ defines an \emph{irrelevant ideal} of the Cox algebra $\Cox(X)$, as
  \begin{equation*}
    \mathcal{I}rr(X):=\left(f\in\Cox(X)_\d\,|\, \d\in\Cl(X)\ \text{and}\ f|_{B_X}=0 \right)\subseteq \Cox(X)\,.
  \end{equation*}
  Analogously, after the choice of a set $\X$ of generators of $\Cox(X)$, consider the \emph{lifted irrelevant ideal} of $X$
  \begin{equation*}
    \widetilde{\I rr}:=\pi_\X^{-1}(\I rr(X))\subseteq\K[\X]\,.
  \end{equation*}
  The associated zero-locus $\widetilde{B}=\mathcal{V}(\widetilde{\I rr})\subseteq \Spec(\K[\X])=:\overline{W}$ will be called the \emph{lifted irrelevant locus} of $X$.
\end{definition}

\begin{proposition}\label{prop:irr} The following facts hold:
\begin{enumerate}
  \item $\mathcal{I}rr(X)=\left(f\in\Cox(X)_\d\,|\, \d\in\Cl(X)\ \text{and}\ \overline{X}_f:=\overline{X}\setminus\{f=0\}\ \text{is affine} \right)$\,;
  \item $\widetilde{\irr}=\left(P\in\K[\X]\,|\, \overline{W}_P\ \text{is affine} \right)$\,;

  \item the definition of $\widetilde{\irr}$ gives:
      \quad$I\subseteq\widetilde{\irr}\ \Longrightarrow\ \widetilde{B}\subseteq\im(\overline{i})\,;$\\
      then, under the isomorphism $\Cox(X)\cong \K[\X]/I$, it turns out that
$$
\irr(X)\cong \widetilde{\irr}\left/I\right.\quad\text{i.e.}\quad\overline{i}\left(B_X\right)=\widetilde{B}\,.
$$

\end{enumerate}
\end{proposition}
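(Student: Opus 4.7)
The plan is to prove the three items in sequence: first (1) via the identification $j_X(\widehat{X})=\overline{X}^{ss}$ from Theorem~\ref{thm:K-emb}, then (2) as a pullback of (1) along the surjection $\pi_\X:\K[\X]\twoheadrightarrow\Cox(X)$, and finally (3) as a formal consequence of the definition of $\widetilde{\irr}$.

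For part (1), the inclusion $(\subseteq)$ is easy: if $f\in\Cox(X)_\d$ is homogeneous and $f|_{B_X}=0$, then $\overline{X}_f\subseteq\overline{X}\setminus B_X=j_X(\widehat{X})$, and $\overline{X}_f$ is automatically affine, being a principal open in the affine scheme $\overline{X}$. For the reverse inclusion, the heart of the argument is to observe that on an affine $G$-stable open of the form $\overline{X}_f$ (stability being automatic from the homogeneity of $f$, since $f(g\cdot x)$ differs from $f(x)$ only by a character of $G$), the good quotient $p_X$ of Theorem~\ref{thm:K-emb} restricts to a good quotient of $\overline{X}_f$ under $G$; by the maximality of $\overline{X}^{ss}=j_X(\widehat{X})$ among open subsets admitting such a good quotient, this forces $\overline{X}_f\subseteq j_X(\widehat{X})$ and hence $f|_{B_X}=0$.

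For part (2), I would pull back the characterization in (1) along $\pi_\X$. The closed embedding $\overline{i}:\overline{X}\hookrightarrow\overline{W}=\Spec\K[\X]$ induced by $\pi_\X$ carries the principal open $\overline{X}_f\subseteq\overline{X}$ onto $\overline{W}_P\cap\overline{i}(\overline{X})$ for any lift $P\in\pi_\X^{-1}(f)$. Affinity of $\overline{W}_P$ is automatic since $\overline{W}\cong\K^m$ is affine; therefore the condition in (1) translates unchanged to the lifted setting, and the defining equality $\widetilde{\irr}=\pi_\X^{-1}(\irr(X))$ yields exactly the stated description of $\widetilde{\irr}$.

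Part (3) is essentially bookkeeping from $\widetilde{\irr}=\pi_\X^{-1}(\irr(X))$. The inclusion $I=\ker(\pi_\X)\subseteq\widetilde{\irr}$ is immediate, and taking vanishing loci reverses inclusions to give $\widetilde{B}=\mathcal{V}(\widetilde{\irr})\subseteq\mathcal{V}(I)=\overline{i}(\overline{X})$. The first isomorphism theorem then delivers $\irr(X)\cong\widetilde{\irr}/I$ under $\Cox(X)\cong\K[\X]/I$, and passing to zero sets (together with $\mathcal{V}(I)=\overline{i}(\overline{X})$) produces the required equality $\overline{i}(B_X)=\widetilde{B}$.

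The delicate step is the converse direction in (1), where I must translate the property ``$\overline{X}_f$ is an affine open with a good $G$-quotient'' into membership in $\irr(X)$. The cleanest way to avoid deep GIT machinery is to exploit that $p_X:\widehat{X}\twoheadrightarrow X$ is affine by Definition~\ref{def:quozienti}(i) and that $\widehat{X}=\overline{X}^{ss}$ is maximal with this property, so any principal open on which the quotient extends must already lie in $\widehat{X}$; once (1) is secured, the remaining items reduce to a straightforward chase through $\pi_\X$ and $\overline{i}$.
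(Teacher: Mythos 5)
The paper itself disposes of this proposition in one line (``(1) follows immediately from the definition; (2) and (3) are consequences of (1)''), so the real question is whether your expanded argument is sound, and the key step is not. Your proof of the reverse inclusion in (1) argues that, since $\overline{X}_f$ is a $G$-stable affine open admitting a good quotient, maximality of $\overline{X}^{ss}=j_X(\widehat{X})$ forces $\overline{X}_f\subseteq j_X(\widehat{X})$. This fails for two reasons. First, $p_X$ is only defined on $\widehat{X}$, so it cannot ``restrict'' to $\overline{X}_f$ unless $\overline{X}_f\subseteq j_X(\widehat{X})$ already holds, which is precisely what you are trying to prove. Second, the semistable locus is maximal only among open sets on which a \emph{fixed} quotient map restricts to a good quotient; it is not maximal among $G$-stable opens merely admitting \emph{some} good quotient. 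Indeed, every $G$-stable principal open $\overline{X}_f$ of the affine $G$-variety $\overline{X}$ (with $G$ a quasi-torus, hence linearly reductive) admits a good quotient $\overline{X}_f\to\Spec\bigl((\Cox(X)_f)^G\bigr)$ --- as does $\overline{X}$ itself --- so your criterion would force $\overline{X}\subseteq\overline{X}^{ss}$, i.e.\ $B_X=\emptyset$, and would place every homogeneous element in $\irr(X)$.

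Your own (correct) observation that affineness of $\overline{X}_f$ is automatic should have been the warning sign rather than a convenience: read literally, the right-hand sides of (1) and (2) are the ideals generated by \emph{all} homogeneous elements, which are in general strictly larger than the irrelevant ideals. For $X=W=\P^1\times\P^1$ one has $\irr(W)=(x_0,x_1)\cap(y_0,y_1)$, yet $\overline{W}_{x_0}$ is affine while $x_0\notin\irr(W)$; so the literal equivalence cannot be proved because it is false. The condition that gives (1) genuine content is affineness of the locus \emph{downstairs}, $X_f:=p_X\bigl(\overline{X}_f\cap j_X(\widehat{X})\bigr)$ --- equivalently $\overline{X}_f\subseteq j_X(\widehat{X})$, in which case $\overline{X}_f$ is saturated for the geometric quotient and $X_f\cong\Spec\bigl(\cO(\overline{X}_f)^G\bigr)$ is affine --- and it is this equivalence, in the spirit of the bunched-ring formalism of \cite{ADHL}, that carries the GIT content of the statement. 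Once (1) is read and established in that form, your reduction of (2) to (1) via $\pi_\X$ and your bookkeeping in (3) (the inclusion $I\subseteq\widetilde{\irr}$, hence $\widetilde{B}=\mathcal{V}(\widetilde{\irr})\subseteq\mathcal{V}(I)=\overline{i}(\overline{X})$, the isomorphism $\irr(X)\cong\widetilde{\irr}/I$, and the resulting equality $\overline{i}(B_X)=\widetilde{B}$) go through essentially as you wrote them.
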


\begin{proof}
  (1) follows immediately from the definition. (2) and (3) are consequences of (1) and the definition of $\widetilde{\irr}$.
\end{proof}

\subsection{The $\X$-canonical toric embedding}\label{ssez:canon-emb}

Let $X$ be a wMDS and $\Cox(X)$ be its Cox ring. Recall that the latter is a graded $\K$-algebra over the class group $\Cl(X)$ of $X$.

\begin{definition}[Cox generators and bases]\label{def:Coxgen} Given a set $\X$ of generators of $\Cox(X)$, as done in Remark~\ref{rems:wMDS}~(4), an element $x\in\X$  is called a \emph{Cox generator} if it is \emph{$\Cl(X)$-prime}, in the sense of \cite[Def.~1.5.3.1]{ADHL}, that is:
\begin{itemize}
  \item $x$ is a non-zero, non-unit element of
$\Cox(X)$ and there exists $\d\in\Cl(X)$ such that $x\in\Cox(X)_\d$ (i.e. $x$ is \emph{homogeneous}) and
  \begin{equation*}
    \forall\,\d_1,\d_2\in\Cl(X)\,,\,\forall\,f_1\in\Cox(X)_{\d_1}\,,\,\forall\,f_2\in\Cox(X)_{\d_2}\quad x\,|\,f_1f_2\ \Longrightarrow\ x\,|\,f_1\ \text{or}\ x\,|\,f_2\,.
  \end{equation*}
\end{itemize}
If $\X$ is entirely composed by Cox generators then it is called a \emph{Cox basis} of $\Cox(X)$ if it has \emph{minimum cardinality}.
\end{definition}

\begin{theorem}[$\X$-canonical toric embedding]\label{thm:canonic-emb}
Let $X$ be a wMDS endowed with a Cox basis $\X$ of $\Cox(X)$. Then there exists a closed embedding $i:X\hookrightarrow W$ into a $\Q$-factorial and non-degenerate toric variety $W$, fitting into the following commutative diagram
\begin{equation}\label{embediag}
    \xymatrix{\overline{X}\ar@/^2pc/[rrr]^-{\overline{i}}&\widehat{X}\ar@{_{(}->}[l]_-{j_X}\ar@{>>}[d]^-{p_X}
    \ar@{^(->}[r]^-{\widehat{i}}
                &\widehat{W}\ar@{^(->}[r]^-{j_W}\ar@{>>}[d]^-{p_W}&\overline{W}\\
                &{X}\ar@{^(->}[r]^-{i}&{W}&}
  \end{equation}
where
\begin{enumerate}
  \item $\overline{W}=\Spec\K[\X]$, as in Remark~\ref{rems:wMDS}~(4),
  \item $\widehat{W}:=\overline{W}\backslash\widetilde{B}$ is a Zariski open subset and $j_W:\widehat{W}\hookrightarrow\overline{W}$ is the associated open embedding,
  \item $\widehat{i}:=\overline{i}|_{\widehat{X}}$\,,
  \item $p_W:\widehat{W}\twoheadrightarrow W$ is a 1-free geometric quotient by an action of the quasi-torus $G=\Hom(\Cl(X),\K^*)$ on the affine va\-rie\-ty $\overline{W}=\Spec(\K[\X])$, \wrt $\widehat{i}$ turns out to be equivariant. Then $j_W(\widehat{W})=\overline{W}^s=\overline{W}^{ss}$ and $$(p_W)^*:\cO_W\stackrel{\cong}{\longrightarrow} (p_W)_*\cO_{\widehat{W}}^G$$
\end{enumerate}
\end{theorem}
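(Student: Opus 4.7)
The plan is to construct the ambient toric variety $W$ as a GIT-style quotient of $\overline{W}=\Spec\K[\X]\cong\K^m$ by the quasi-torus $G=\Hom(\Cl(X),\K^*)$, and then to descend the closed embedding $\overline{i}\colon\overline{X}\hookrightarrow\overline{W}$ furnished by the surjection $\pi_\X$ of (\ref{can-surj}) along the two vertical quotient maps in diagram (\ref{embediag}).

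First, I would use the Cox basis $\X=(x_1,\ldots,x_m)$ to define a $\Cl(X)$-grading on $\K[\X]$ by placing $x_j$ in the degree $\d_j\in\Cl(X)$ of its image in $\Cox(X)$. Because the $x_j$ are homogeneous generators, $\pi_\X$ is $\Cl(X)$-graded, and dually the grading equips $\overline{W}$ with a diagonal $G$-action which makes $\overline{i}$ equivariant. Assembling the $\d_j$ (after a splitting of $\Tors(\Cl(X))$) into an $r\times m$ integer matrix $Q$, I would next verify that $Q$ is a $W$-matrix in the sense of Definition~\ref{def:Wmatrice}: property (a) uses that the $\d_j$ generate $\Cl(X)\otimes\Q$ (since $\X$ generates $\Cox(X)$); (b) is automatic after the splitting; $W$-positivity (c) follows from effectivity of the classes of Cox generators; (d) holds because each $x_j$, being $\Cl(X)$-prime, is non-unit and hence $\d_j\neq 0$; and axioms (e),(f) are precisely the statements whose failure would force a nontrivial relation among the $x_j$ contradicting the minimality and primality built into the definition of a Cox basis.

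Next, let $V:=\G(Q)$ be the Gale dual fan matrix. I would build $W$ by taking $\widehat{W}:=\overline{W}\setminus\widetilde{B}$ and defining $W$ as the good geometric $G$-quotient $p_W\colon\widehat{W}\twoheadrightarrow W$. To identify this with a toric variety, I would match the homogeneous generators of $\widetilde{\irr}$ (characterised in Proposition~\ref{prop:irr}~(2) as those $P\in\K[\X]$ for which $\overline{W}_P$ is affine) with a bunch of cones in the column space of $Q$; by Gale duality this bunch corresponds to a rational simplicial fan $\Sigma\subset N_\R$ with $\Sigma(1)=\{\langle\v_j\rangle\}$, and the classical Cox quotient construction recovers $W=X(\Sigma)$ as $\widehat{W}/G$. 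Simpliciality of $\Sigma$, and hence $\Q$-factoriality of $W$, reflects the fact that the $G$-action on $\widehat{W}$ has finite stabilisers — a consequence of $\Q$-factoriality of $X$ via Theorem~\ref{thm:K-emb}. Non-degeneracy of $\Sigma$ amounts to $V$ being an $F$-matrix, which follows from $\rk Q=r$.

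Finally, to produce the embedding, I would note that Proposition~\ref{prop:irr}~(3) gives $\overline{i}(B_X)=\widetilde{B}\cap\im(\overline{i})$, so $\overline{i}$ restricts to a $G$-equivariant locally closed embedding $\widehat{i}\colon\widehat{X}\hookrightarrow\widehat{W}$; passing to the geometric $G$-quotients yields the induced morphism $i\colon X\to W$. Injectivity of $i$ follows from injectivity of $\widehat{i}$ on $G$-orbits (closed orbits in $\widehat{W}$ meet $\overline{i}(\overline{X})$ in closed orbits of $\widehat{X}$ by equivariance), and closedness of $i$ from the observation that $p_W(\overline{i}(\overline{X})\cap\widehat{W})$ is the $G$-saturation of a closed subset of $\widehat{W}$, hence closed. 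Commutativity of (\ref{embediag}) is then immediate from the construction.

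The main obstacle is the second step: linking the a priori purely affine-algebraic object $\widetilde{B}$ (defined via Proposition~\ref{prop:irr} in terms of which affine opens $\overline{W}_P$ survive) to a bona fide fan $\Sigma\subset N_\R$ whose Cox quotient equals $\widehat{W}/G$. This is where the hypothesis that $\X$ is a Cox basis — rather than an arbitrary set of homogeneous generators — is indispensable: $\Cl(X)$-primality ensures that the $x_j$ descend to distinct prime torus-invariant divisors on $W$, so that $\Sigma(1)$ is in bijection with $\X$, while minimality prevents redundant rays that would spoil the Gale dual correspondence between homogeneous generators of $\widetilde{\irr}$ and maximal cones of $\Sigma$. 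Once this correspondence is established, the remaining properties of $W$ (toricity, $\Q$-factoriality, non-degeneracy, 1-freeness) follow from standard toric Cox theory.
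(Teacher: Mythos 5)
Your opening and closing moves coincide with the paper's: the $G$-action on $\overline{W}$ is defined through the degrees $\d_j$ of the Cox generators, $\overline{i}$ is equivariant, Proposition~\ref{prop:irr}~(3) identifies $\overline{i}(B_X)$ with $\widetilde{B}$ so that $\widehat{i}$ restricts to a closed equivariant embedding $\widehat{X}\hookrightarrow\widehat{W}$, and closedness of $i(X)$ follows from saturation. The divergence, and the first genuine gap, is in how you identify $\widehat{W}/G$ as a $\Q$-factorial, non-degenerate toric variety. You route through Gale duality and assert that $Q$ is a $W$-matrix and that $V=\G(Q)$ is an $F$-matrix ``because $\rk Q=r$''. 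Both assertions are only available for \emph{complete} wMDS: the paper establishes them in Corollary~\ref{cor:Qpositive} precisely under a completeness hypothesis (strong convexity of $\overline{\Eff}(X)$ uses $H^0(X,\cO_X)\cong\K$), and the $F$-matrix claim is outright false for a general wMDS, since an $F$-matrix requires $\langle V\rangle=N_{\R}$, which is equivalent to completeness of $W$ --- Example~\ref{ex:noncompletabile} exhibits a wMDS whose canonical ambient $W$ is non-complete. Non-degeneracy only needs the columns of $V$ to span $N_\R$ \emph{linearly}, which does follow from Gale duality, so your conclusion survives, but the stated reason conflates two different notions; the paper instead verifies $H^0(W,\cO_W^*)\cong\K^*$ directly, using that $\widetilde{B}$ has codimension at least $2$ in $\overline{W}\cong\K^m$.

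The second gap is $\Q$-factoriality. You deduce simpliciality of $\Sigma$ from ``finite stabilisers of the $G$-action on $\widehat{W}$, a consequence of $\Q$-factoriality of $X$ via Theorem~\ref{thm:K-emb}''. But Theorem~\ref{thm:K-emb} controls the action only on $\widehat{X}$, which is a proper closed $G$-invariant subset of $\widehat{W}$; nothing in your argument bounds the stabilisers on $\widehat{W}\setminus\widehat{i}(\widehat{X})$. The paper closes this by a different mechanism: it first proves the embedding is neat (Proposition~\ref{prop:neat}), obtaining $\Cl(W)\cong\Cl(X)$ and $\Pic(W)\cong\Pic(X)$, and then transfers $\Q$-factoriality from $X$ to $W$ through these isomorphisms (Corollary~\ref{cor:WèQ-f}); the toric structure of $W$ and the quotient properties of $p_W$ are obtained not by first extracting a fan from $\widetilde{\irr}$, but by exhibiting the residue torus through $1\to G\to\T\to\Hom(K_0,\K^*)\to 1$ and invoking Cox's theorem \cite[Thm.~2.1]{Cox}. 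If you insist on your combinatorial route you are essentially redoing the bunched-ring construction of \cite[\S~3.2.5]{ADHL} --- which the paper deliberately set out to avoid --- and you would still owe an argument that the bunch attached to $\widetilde{\irr}$ consists of cones of the right dimension, which is exactly the simpliciality statement you have not yet proved.
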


\begin{proof}
  Let $\X=\{x_1,\ldots,x_m\}$ be a Cox basis of $\Cox(X)$. Then the surjection $\pi_\X:\K[\X]\twoheadrightarrow \Cox(X)$ gives rise to the closed embedding $\overline{i}:\overline{X}\hookrightarrow\overline{W}\cong\K^m$ explicitly obtained by evaluating the Cox generators i.e.
  $$\xymatrix{x\in \overline{X}\ar@{|->}[r]&(x_1(x),\ldots,x_m(x))\in \K^m}\,.$$
 The fact that $x_i$ is a Cox generator implies that $\overline{x}_i=x_i+I$ is homogeneous, that is there exists a class $\d_i\in \Cl(X)$ such that $\overline{x}_i\in\Cox(X)_{\d_i}$. We can then define an action of the quasi-torus $G=\Hom(\Cl(X),\K^*)$ on $\overline{W}$ by multiplication, as follows,
$$\xymatrix{(g,z)\in G\times\overline{W}\ar@{|->}[r]&(\chi_1(g)z_1,\ldots,\chi_m(g)z_m)\in\overline{W}}$$
where $\chi_i:G\to\K^*$ is the character defined by setting:
$\chi_i(g)=g(\d_i)$\,.

\noindent In particular $\boldsymbol{\chi}=(\chi_1,\ldots,\chi_m)$ defines a closed embedding
 \begin{equation*}
   \xymatrix{G\ \ar@{^(->}[r]^-{\boldsymbol{\chi}}&\T:=\Spec\K[\X^{\pm 1}]\cong(\K^*)^m}
 \end{equation*}
 the latter being the torus naturally acting by multiplication on $\overline{W}\cong\K^m$ and giving the obvious structure of toric variety to $\overline{W}$. In particular $\overline{i}$ is equivariant \wrt the $G$-actions on both $\overline{X}$ and $\overline{W}$. Recalling Proposition~\ref{prop:irr}~(3),  $\overline{i}(B_X)=\widetilde{B}$ guaranteeing that also $\widehat{i}:=\overline{i}|_{\widehat{X}}$ is a closed equivariant embedding. Passing to the quotient by $G$, we get then a well defined injection $i:X\hookrightarrow W:=\widehat{W}/G$, fitting into the commutative diagram (\ref{embediag}). In particular $i(X)$ is a closed subset of $W$, since $p_W^{-1}(W\backslash i(X))=\widehat{W}\backslash\widehat{i}(\widehat{X})$ is open.

 \noindent To show that $p_W:\widehat{W}\twoheadrightarrow W$ is a geometric quotient of toric varieties it suffices observing it is realizing the Cox quotient construction for $W$. In fact, by the exact sequence
  \begin{equation*}
   \xymatrix{1\ar[r]&G\ar[r]^-{\boldsymbol{\chi}}&\T\ar[r]&\Hom(K_0,\K^*)\ar[r]&1}
 \end{equation*}
 $W$ is naturally a toric variety under the action of the residue torus $\Hom(K_0,\K^*)$, with base point $x_0:=p_W([1:\cdots:1])$.  By the quotient construction one has the natural isomorphism $(p_W)^*:\cO_W\stackrel{\cong}{\longrightarrow} (p_W)_*\cO_{\widehat{W}}^G$. Moreover, $W$ is non-degenerate, that is it does not admit torus factors, as
 \begin{equation*}
   H^0(W,\cO_W^*)\stackrel{p_W^*}{\cong} H^0\left(W,((p_W)_*\cO_{\widehat{W}}^G)^*\right)=H^0\left(\widehat{W},(\cO_{\widehat{W}}^G)^*\right)\cong H^0\left(\widehat{W},\cO_{\widehat{W}}^*\right) \cong\K^*
 \end{equation*}
 by observing that $\widehat{W}$ does not admit any torus factor, as it is an open subset of $\overline{W}\cong\K^m$ whose complementary set $\widetilde{B}$ has codimension bigger than 2, and the $G$-action on constant functions is trivial. Finally, $W$ turns out to be $\Q$-factorial, as proved in the following Corollary~\ref{cor:WèQ-f}. We can then apply \cite[Thm.~2.1~(iii)]{Cox}. Then all the further properties of the geometric quotient $p_W:\widehat{W}\twoheadrightarrow W$ follow by the same arguments given when proving the previous Theorem~\ref{thm:K-emb}.
\end{proof}

\begin{remark}
  W{\l}odarzyck proved \cite[Thm.~A]{Wl}, that a $\Q$-factorial variety $X$ admits a closed embedding into a $\Q$-factorial toric variety if and only if it admits the following property
  \begin{itemize}
    \item[(V-2)] \emph{for any two points of $Y$ there exists an open affine set containing them}.
  \end{itemize}
  As a consequence of the previous Theorem~\ref{thm:canonic-emb}, one can then say that:
  \begin{itemize}
    \item \emph{a wMDS $X$ admitting a Cox basis $\X$ satisfies the (V-2) property.}
  \end{itemize}
The latter is proved, in a combinatorial language, in \cite[Thm.~3.2.1.4]{ADHL}, as the reader will understand after reading the next Remark~\ref{rem:bunched}.
\end{remark}

\subsubsection{Nomenclature (continued)}\label{sssez:nomenclatura2} We continue the nomenclature and notation introduced in \S~\ref{sssez:nomenclatura}.
\begin{enumerate}
  \item The ambient toric variety $W$, defined in Theorem~\ref{thm:canonic-emb}, only depends on the choices of the Cox basis $\X$ and no more on $K$ and $\chi$, as given in \ref{ssez:K} and \ref{ssez:chi}. In fact, for different choices $K',\chi'$ we get an isomorphic Cox ring, as observed in Remark~\ref{rems}~(1). Then it still admits the presentation $\K[\X]/I$, given in Remark~\ref{rems:wMDS}~(4), meaning that the toric embedding $i:X\hookrightarrow W$ remains unchanged, up to isomorphism. The latter is then called the \emph{$\X$-canonical toric embedding}, and $W$ is called the \emph{$\X$-canonical toric ambient variety}, of the wMDS $X$.

      \noindent Notice that the $\X$-canonical toric embedding exhibited in Theorem~\ref{thm:canonic-emb} only depends on the cardinality $|\X|$, which is fixed to be the minimum. Then, up to isomorphisms, $i:X\hookrightarrow W$ is then a \emph{canonical toric embedding}.
  \item Varieties $\widehat{W}$ and $\overline{W}$, exhibited in Theorem~\ref{thm:canonic-emb}, are called the \emph{characteristic space} and the \emph{total coordinate space}, respectively, of the canonical toric ambient variety $W$.
\end{enumerate}

\begin{remark}
  If $X$ is a $\Q$-factorial and complete toric variety, then the canonical toric embedding defined in \S~\ref{sssez:nomenclatura2}~(1) becomes trivial, giving $\overline{X}\cong\overline{W}$, $\widehat{X}\cong\widehat{W}$ and $X\cong W$.
\end{remark}

\subsubsection{The canonical toric embedding is a neat embedding}
Let $X$ be a wMDS and $i:X\hookrightarrow W$ be its $\X$-canonical toric embedding constructed in Theorem~\ref{thm:canonic-emb}, with $\X=\{x_1,\dots,x_m\}$ a given Cox basis of $\Cox(X)$. Recall notation introduced in Remark~\ref{rems}~(4) and in particular the definition of the fan matrix $V=(\v_1,\ldots,\v_m)$, which is a representative matrix of the dual morphism
\begin{equation*}
  \xymatrix{\Hom(\Div_\T(W),\Z)\ar[r]^-{div_W^\vee}_-V&N:=\Hom(M,\Z)}
\end{equation*}
whose columns are primitive generators of the rays in $\Si(1)$. In the following we will then denote
$D_i:=D_{\langle\v_i\rangle}$ the prime torus invariant associated with the ray $\langle\v_i\rangle\in\Si(1)$, for every $1\leq i\leq m$.

\begin{proposition}[Pulling back divisor classes]\label{prop:pullback} Let $i:X\hookrightarrow W$ be a closed embedding of a normal irreducible algebraic variety X into a toric variety $W(\Si)$ with acting torus $\T$. Let $D_{\rho}=\overline{\T\cdot x_{\rho}}$, for $\rho\in\Si(1)$, be the invariant prime divisors of $W$ and assume that $\{i^{-1}(D_\rho)\}_{\rho\in\Si(1)}$ is a set of pairwise distinct irreducible hypersurfaces in $X$. Then it is well defined a pull back homomorphism $$i^*:\Cl(W)\to\Cl(X)$$
\end{proposition}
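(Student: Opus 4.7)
The plan is to construct $i^*$ on the level of torus-invariant divisors, and then show that it descends to class groups through the defining exact sequence of $\Cl(W)$.

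First, I would define a group homomorphism $\tilde{\imath}^*:\Div_\T(W)\to\Div(X)$ by setting $\tilde{\imath}^*(D_\rho):=Z_\rho:=i^{-1}(D_\rho)$ on each generator and extending linearly. By hypothesis the $Z_\rho$ are pairwise distinct prime Weil divisors on $X$, hence linearly independent in the free group $\Div(X)$, so the map is well defined. Composing with $d_X:\Div(X)\twoheadrightarrow\Cl(X)$ yields $\overline{\tilde{\imath}^*}:\Div_\T(W)\to\Cl(X)$. In view of the toric exact sequence \eqref{Tdivisori} applied to $W$,
$$M\xrightarrow{div_W}\Div_\T(W)\xrightarrow{d_{K^W}}\Cl(W)\to 0\,,$$
the homomorphism $\overline{\tilde{\imath}^*}$ descends to the desired $i^*:\Cl(W)\to\Cl(X)$ if and only if $\overline{\tilde{\imath}^*}\circ div_W=0$, that is, if and only if for every $\m\in M$ the divisor $\sum_{\rho\in\Si(1)}\langle\m,\v_\rho\rangle\,Z_\rho$ is principal on $X$.

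The natural candidate for a defining rational function is the restriction $g_\m:=i^*(\chi^\m)\in\K(X)^*$. It is well defined and non-zero because $\chi^\m$ is regular and invertible on the torus $\T\subseteq W$, and $i(X)\cap\T\neq\emptyset$: indeed $\bigcup_\rho Z_\rho$ is a finite union of proper closed subsets of the irreducible variety $X$, hence a proper closed subset whose complement coincides with $i^{-1}(\T)\neq\emptyset$. The same observation shows that $g_\m$ is regular and nowhere vanishing on $X\setminus\bigcup_\rho Z_\rho$, so $\Supp((g_\m))\subseteq\bigcup_\rho Z_\rho$ and one may write $(g_\m)=\sum_\rho n_\rho\,Z_\rho$ for integers $n_\rho\in\Z$.

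It remains to prove $n_\rho=\langle\m,\v_\rho\rangle$ for every $\rho$. This is a local computation at the generic point $\eta_\rho$ of $Z_\rho$, exploiting that $\mathcal{O}_{X,\eta_\rho}$ is a DVR by normality of $X$, together with the toric identity $v_{D_\rho}(\chi^\m)=\langle\m,\v_\rho\rangle$ in the parallel DVR $\mathcal{O}_{W,\eta_{D_\rho}}$; the two are compared via the local surjection $i^\#:\mathcal{O}_{W,i(\eta_\rho)}\twoheadrightarrow\mathcal{O}_{X,\eta_\rho}$ induced by the closed immersion. This last step is the main obstacle: the hypothesis that $Z_\rho$ is a single irreducible hypersurface on the normal variety $X$ is precisely what is needed to rule out both reducibility of the scheme-theoretic preimage $i^{-1}(D_\rho)$ and hidden codimension-one multiplicities, ensuring that a local equation of $D_\rho$ pulls back to a uniformizer of $\mathcal{O}_{X,\eta_\rho}$ and hence that $n_\rho=\langle\m,\v_\rho\rangle$. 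Principality of $\sum_\rho\langle\m,\v_\rho\rangle\,Z_\rho$ then follows, yielding the well-defined pullback $i^*:\Cl(W)\to\Cl(X)$.
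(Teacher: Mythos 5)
Your overall strategy --- define the pullback on $\Div_\T(W)$ and descend it through the presentation $M\to\Div_\T(W)\to\Cl(W)\to 0$ --- is sound and close in spirit to the paper's proof, but the map you put on the generators is the wrong one, and the step you yourself flag as ``the main obstacle'' is exactly where the argument breaks. You set $\tilde{\imath}^*(D_\rho):=Z_\rho$ with coefficient $1$ and then claim $n_\rho=\langle\m,\v_\rho\rangle$, on the grounds that irreducibility of $i^{-1}(D_\rho)$ rules out ``hidden codimension-one multiplicities'' and forces a local equation of $D_\rho$ to pull back to a uniformizer of $\cO_{X,\eta_\rho}$. This is false: the hypothesis controls only the set-theoretic preimage and says nothing about the valuation $k_\rho:=v_{Z_\rho}\bigl(i^\# f_\rho\bigr)\geq 1$ of the pulled-back local equation $f_\rho$ of $D_\rho$. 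What is actually true is $n_\rho=k_\rho\,\langle\m,\v_\rho\rangle$. Already a smooth conic in $\P^2$ tangent to a coordinate line meets it in a single reduced point, yet the local equation of that line pulls back to the \emph{square} of a uniformizer, so $k_\rho=2$.

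This is not cosmetic: with coefficient $1$ the map can genuinely fail to descend. Let $X=E$ be a smooth plane cubic whose coordinate lines are inflectional tangents at three distinct inflection points $P_1,P_2,P_3$. Each $i^{-1}(D_j)=\{P_j\}$ is an irreducible hypersurface of $E$ and they are pairwise distinct, so the hypotheses of the Proposition hold; but $D_1-D_2$ is principal on $\P^2$, while $\tilde{\imath}^*(D_1-D_2)=P_1-P_2$ is a nonzero $3$-torsion class on $E$, hence not principal. (The correct pullback is $3P_1-3P_2$, the divisor of the restriction of $x_1/x_2$, which \emph{is} principal.) The repair is precisely what the paper does: work on the open set $W_\T$ consisting of the big torus together with the codimension-one orbits inside $W_{\mathrm{reg}}$, where every torus-invariant divisor is Cartier, define $i^\#(D)$ by pulling back local equations --- so that $i^\#(D_\rho)=k_\rho Z_\rho$ rather than $Z_\rho$ --- and take closures; principal divisors then go to principal divisors tautologically, since a global equation restricts to a global equation. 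Your reduction to the generic points $\eta_\rho$, and your (implicit) use of pairwise distinctness to ensure $i(\eta_\rho)$ lands in the orbit $\T\cdot x_\rho$, away from the other $D_{\rho'}$ and inside the smooth locus, are correct and worth keeping; only the coefficient in the definition of $\tilde{\imath}^*$ must be changed.
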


\begin{proof}
  Let $x_0$ be the base point of $W$, as in Definition~\ref{def:TV}. Define
  \begin{equation*}
    W_\T:=\T\cdot x_0\cup \bigcup_{\rho\in\Si(1)}\T\cdot x_\rho\cap W_{\text{reg}}\,.
  \end{equation*}
  Our hypothesis on $\{i^{-1}(D_\rho)\}_{\rho\in\Si(1)}$ allows then us to conclude that $i^{-1}(W_\T)$ is a dense Zariski open subset of $X$. Let $D\in\Div_\T(W)$ be a torus invariant Weil divisor. Then $D\cap W_\T$ is a Cartier divisor on $W_\T\subseteq W_{\text{reg}}$ and define the pull back $i^\#(D):=\overline{i^\#(D\cap W_\T)}\in\Div(X)$ by pulling back local equations.
   This procedure clearly sends principal divisors to principal divisors, so defining a pull back homomorphism $i^*:\Cl(W)\to\Cl(X)$, by setting $i^*\d=[i^\#(d_W^{-1}(\d))]$, for every $\d\in\Cl(W)$ and recalling the surjection $d_W:\Div_\T(W)\twoheadrightarrow\Cl(W)$ presented in Remark~\ref{rems}~(4).
\end{proof}

\begin{definition}[Neat embedding]\label{def:neat}
  Let $X$ be a wMDS and $W(\Si)$ be a toric variety. Let $\{D_{\rho}\}_{\rho\in\Si(1)}$ be the torus invariant prime divisors of $W$. A closed embedding $i:X\hookrightarrow W$ is called a \emph{neat (toric) embedding} if
  \begin{itemize}
    \item[(i)] $\{i^{-1}(D_\rho)\}_{\rho\in\Si(1)}$ is a set of pairwise distinct irreducible hypersurfaces in $X$,
    \item[(ii)] the pullback homomorphism defined in Proposition~\ref{prop:pullback}, $$\xymatrix{i^*:\Cl(W)\ar[r]^-{\cong}&\Cl(X)}\,,$$
        is an isomorphism.
  \end{itemize}
\end{definition}

\begin{proposition}\label{prop:neat}
  The $\X$-canonical toric embedding $i:X\hookrightarrow W$, of a wMDS $X$ with Cox basis $\X$, is a neat embedding. Moreover the isomorphism $i^*:\Cl(W)\stackrel{\cong}{\longrightarrow}\Cl(X)$ restricts to give an isomorphism $\Pic(W)\cong\Pic(X)$.
\end{proposition}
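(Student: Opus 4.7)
I verify the two conditions of Definition~\ref{def:neat} in turn and then establish the Picard group refinement.

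For condition (i), the key identification is that $i^{-1}(D_i)$ equals $\{x_i=0\}$ as an effective Weil divisor on $X$ in the class $\d_i$ of the Cox generator $x_i$. This follows by tracing the commutative diagram (\ref{embediag}): on $\overline{W}=\Spec\K[\X]$ the torus invariant prime divisor $D_i$ pulls back via $p_W$ to the coordinate slice $\{x_i=0\}\cap\widehat{W}$; intersecting with $\widehat{X}$ and descending through the geometric quotient $p_X$ yields $i^{-1}(D_i)=p_X(\widehat{X}\cap\{x_i=0\})$. Irreducibility follows from the $\Cl(X)$-primality of $x_i$ (Definition~\ref{def:Coxgen}): the principal $\Cl(X)$-homogeneous ideal $(x_i)\subseteq\Cox(X)$ is $\Cl(X)$-prime, which via the $\Cl(X)$-graded quotient construction of Theorem~\ref{thm:K-emb} corresponds to an irreducible $G$-invariant subscheme of $\widehat{X}$, and hence to an irreducible hypersurface $i^{-1}(D_i)\subset X$. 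Pairwise distinctness is forced by the minimality of the Cox basis: distinct Cox generators yield distinct $\Cl(X)$-prime principal ideals, so the resulting divisors are distinct.

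For condition (ii), I compare the class group exact sequences. By Theorem~\ref{thm:canonic-emb}, $W$ is the geometric quotient $\widehat{W}/G$ with $G=\Hom(\Cl(X),\K^*)$, and the closed embedding $\boldsymbol{\chi}:G\hookrightarrow\T$ dualizes to the surjection $\Z^m\twoheadrightarrow\Cl(X)$ sending $e_i\mapsto\d_i$. The standard toric Cox construction for $W$ then produces the exact sequence of Remark~\ref{rems}~(4) in the form $0\to M_W\to\bigoplus_i\Z\cdot D_i\to\Cl(X)\to 0$, canonically identifying $\Cl(W)$ with $\Cl(X)$. Since step (i) shows $i^*[D_i]=[i^{-1}(D_i)]=\d_i$, the pullback $i^*$ coincides with this canonical isomorphism, whence (ii).

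For the Picard refinement, I would use the GIT-theoretic characterization of line bundles: for a variety realized as a $1$-free geometric quotient by a diagonalizable group $G$, a class $\d\in\Cl$ lies in $\Pic$ if and only if the character $\chi_\d$ restricts trivially to the stabilizer $G_{\widehat{p}}$ at every point $\widehat{p}$ of the characteristic space. Since $\widehat{i}:\widehat{X}\hookrightarrow\widehat{W}$ is equivariant, the stabilizer of $\widehat{x}\in\widehat{X}$ is the same whether computed in $\widehat{X}$ or in $\widehat{W}$, so $i^*\Pic(W)\subseteq\Pic(X)$ is immediate. The reverse inclusion follows from the canonical and minimal character of the ambient variety $W$: by Theorem~\ref{thm:canonic-emb} its fan $\Si$ is combinatorially dictated by the Cox basis $\X$ and the irrelevant ideal $\irr(X)$, so that every coordinate stratum of $\widehat{W}$ giving a cone of $\Si$ must actually be met by $\widehat{X}$. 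Consequently every stabilizer type occurring in $\widehat{W}$ is already realized on $\widehat{X}$, yielding $\Pic(X)\subseteq i^*\Pic(W)$. The main obstacle is precisely this last step, which requires exploiting the minimality of the Cox basis and the explicit reading of $\Si$ off $\Cox(X)$; the preceding steps are essentially formal consequences of the quotient diagram (\ref{embediag}).
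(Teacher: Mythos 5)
Your treatment of conditions (i) and (ii) of Definition~\ref{def:neat} is essentially sound, though for (ii) you take a different route from the paper: you identify $\Cl(W)$ with $\Cl(X)$ a priori, as the character group of $G=\Hom(\Cl(X),\K^*)$ coming from the Cox quotient presentation of $W$ in Theorem~\ref{thm:canonic-emb}, and then observe that $i^*$ matches this identification because $i^*[D_i]=\d_i$. The paper instead argues divisor-by-divisor: it produces the unique sections $\xi_i$ with $\xi_i+\I_\chi(X)=\overline{x}_i$, shows $i^{-1}(D_i)=(\xi_i)+X_i$ through the characteristic spaces, and then uses Proposition~\ref{prop:Ichi} together with the minimality of the Cox basis to prove that $\cD_i\sim\cD_j$ forces $x_i=x_j$. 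Your version is arguably cleaner, at the price of leaning entirely on the assertion in Theorem~\ref{thm:canonic-emb} that $p_W$ really realizes the Cox construction for $W$; both are acceptable.

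The genuine gap is in the Picard statement, and it sits exactly where you flag ``the main obstacle''. First, the characterization ``$\d\in\Pic$ if and only if $\chi_\d$ is trivial on every stabilizer of the characteristic space'' is not a formal consequence of having a $1$-free geometric quotient: it is a theorem about varieties arising from bunched rings (cf.\ \cite[Cor.~3.3.1.6]{ADHL}) and is neither proved nor cited here. Second, and more seriously, your reverse inclusion $\Pic(X)\subseteq i^*\Pic(W)$ rests on the claim that every stabilizer occurring on $\widehat{W}$ is already realized on $\widehat{X}$, i.e.\ that $\widehat{X}$ meets the coordinate stratum $\{z\,|\,z_i=0\Leftrightarrow i\in I\}$ for every $I$ indexing a maximal cone of $\Si$. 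The justification offered --- that the fan is ``dictated by $\irr(X)$'', hence the strata ``must be met by $\widehat{X}$'' --- is a non sequitur: $\irr(X)$ is by definition the homogeneous ideal of functions vanishing on $B_X$, so the minimality of a monomial $\prod_{j\notin I}\overline{x}_j$ in $\irr(X)$ only constrains where that monomial vanishes on $B_X$; it does not by itself produce a point of $\overline{X}$ whose vanishing coordinates are exactly those indexed by $I$. Supplying this is precisely the content of the ``relevant faces'' analysis in \cite[\S~3.2]{ADHL}, which you would have to invoke or reprove. The paper sidesteps the whole issue: it proves $\Pic(X)\subseteq i^*\Pic(W)$ directly, by lifting the local equations $f/g$ of a Cartier divisor on $X$ through the surjection $\cO_W(V)\twoheadrightarrow (i|_U)_*\cO_U(V)$ coming from the ideal-sheaf sequence of the closed embedding over an affine open $V$ (via \cite[Prop.~II.5.6]{Hartshorne}), with no stabilizer analysis at all. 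As written, your argument for $\Pic(X)\subseteq i^*\Pic(W)$ does not go through.
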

\begin{proof}
  Given a Cox basis $\X=(x_1,\ldots,x_m)$, recall the construction of the $\X$-canonical ambient toric variety $W=W(\Si)$ given in Theorem \ref{thm:canonic-emb}. Consider the torus invariant prime divisor $D_i\in\Div_\T(W)$ associated with the ray $\langle\v_i\rangle\in\Si(1)$. Its pull back $\widehat{D}_i:=p_W^\#(D_i)$  is the principal divisor associated with $x_i|_{\widehat{W}}$, when $x_i$ is thought of as a homogeneous function on $\overline{W}$. Then $\widehat{i}^{-1}\left(\widehat{D}_i\right)=\left(x_i|_{\widehat{X}}\right)$\,, where $x_i$ is now thought of as a function on $\overline{X}$. Let $X_i\in K\leq\Div(X)$ and $\xi_i\in H^0(X,\cO_X(X_i))\subseteq\cS(X)$ be such that $\xi_i+\I_\chi(X)=\overline{x}_i:=x_i+I\in\Cox(X)$ and $\cD_i:=(\xi_i)+X_i$ is a prime divisor in $\Div(X)$. There is a unique choice for such a section $\xi_i$ and $p_X^\#(\cD_i)=\left(x_i|_{\widehat{X}}\right)$ \cite[\S~1.5.2, Prop.~1.5.3.5]{ADHL}.
  Then commutative diagram (\ref{embediag}) gives that $\cD_i=i^{-1}(D_i)$\,,
  meaning that hypotheses of Proposition~\ref{prop:pullback} are satisfied and there is a well defined pull back $i^*:\Cl(W)\to\Cl(X)$.

  \noindent To prove that $i$ is a neat embedding we have to show that $i^*$ is an isomorphism, that is $\cD_i\sim\cD_j$ if and only if $D_i\sim D_j$.
  In fact
  \begin{equation*}
    \cD_i\sim\cD_j \Longleftrightarrow  (\xi_i)+X_i\sim(\xi_j)+X_j\ \Longleftrightarrow\ X_i\sim X_j
  \end{equation*}
  Proposition~\ref{prop:Ichi} gives that $X_i\sim X_j$ if and only if  there exists an isomorphism
  $$\xymatrix{\psi:H^0(X,\cO_X(X_i))\ar[r]^{\cong}&H^0(X,\cO_X(X_j))}$$
  such that $\xi_i-\psi(\xi_i)\in\I_\chi(X)$, where $\psi(\xi_i)=\xi_i\cdot\chi(X_i-X_j)$. In particular
  \begin{equation*}
    (\psi(\xi_i))+X_j=(\xi_i)+X_i=\cD_i
  \end{equation*}
  The uniqueness of $\xi_j$ necessarily gives that $\psi(\xi_i)=k\xi_j$, for some $k\in\K^*$, and $\cD_j=\cD_i$. Then
  \begin{eqnarray*}
     X_i\sim X_j\ \Longleftrightarrow\ \xi_i-\xi_j\in\I_\chi(X)\
     \Longleftrightarrow\ x_i-x_j\in I
  \end{eqnarray*}
By minimality of $\X$  the latter can happen if and only if $x_i=x_j$, hence giving $D_i=D_j$.

 Since $i^*$ is an isomorphism, its restriction induces a monomorphism of $\Pic(W)\leq\Cl(W)$ into $\Pic(X)\leq\Cl(X)$. Then it suffices to show it is also onto $\Pic(X)$. Consider a class $[D]\in\Pic(X)$ with $D\in K$ a Cartier divisor, i.e. $D\in K\cap H^0(X,\mathcal{K}_X^*/\cO^*_X)$. Locally, on an open subset $U\subseteq X$, $D|_U$ is the principal divisor of a rational function $f/g\in \mathcal{K}^*_X(U)$, with $f,g\in\cO_X(U)$. Let $V\subseteq W$ be a Zariski open subset of $W$ such that $U=i^{-1}(V)$: then $\cO_X(U)=i_*\cO_X(V)$. Recalling the pull-back morphism $i^\#:\cO_W\longrightarrow i_*\cO_X$, locally defined by setting $(i|_U)^\#(h)=h\circ i|_U$ for every $h\in\cO_W(V)$, the thesis consists in showing that $f=(i|_U)^\#(\vf)$ and $g=(i|_U)^\#(\g)$, for some $\vf,\g\in\cO_W(V)$, so giving that $[D]=i^*[\cD]$ with $\cD|_V=(\vf/\g)$, that is $[\cD]\in\Pic(W)$.

 \noindent Notice that $\cO_W|_V\cong\cO_V$ and $i_*\cO_X|_V\cong (i|_U)_*\cO_U$, since $V$ and $U$ are open subsets of $W$ and $X$, respectively. Without lose of generality, assume $V$ is affine and consider the exact sequence of coherent $\cO_V$-modules
 \begin{equation*}
   \xymatrix{0\ar[r]&\mathfrak{I}\ar[r]&\cO_V\ar[r]^-{(i|_U)^\#}&(i|_U)_*\cO_U\ar[r]&0}
 \end{equation*}
 where $\mathfrak{I}$ is the sheaf of ideals of the closed embedding $i|_U:U\hookrightarrow V$ (for the coherence of $\mathfrak{I}$ and $(i|_U)_*\cO_U$ recall \cite[Prop.~II.5.9]{Hartshorne}). Then \cite[Prop.~II.5.6]{Hartshorne} gives that exactness is preserved on global sections, that is
 \begin{equation*}
   \xymatrix{0\ar[r]&\mathfrak{I}(V)\ar[r]&\cO_V(V)\ar[r]^-{(i|_U)^\#}&(i|_U)_*\cO_U(V)\ar[r]&0}
 \end{equation*}
 is an exact sequence: in particular $(i|_U)^\#:\cO_W(V)\twoheadrightarrow \cO_X(U)$ is surjective, so giving the thesis.
\end{proof}

\begin{corollary}\label{cor:WèQ-f}
  The canonical ambient toric variety $W$, of a wMDS $X$, is $\Q$-factorial.
\end{corollary}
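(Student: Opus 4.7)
The plan is to deduce $\Q$-factoriality of $W$ from the $\Q$-factoriality of $X$ via the pullback isomorphism on class and Picard groups established in Proposition~\ref{prop:neat}, together with the standard characterization of $\Q$-factorial toric varieties. I would first point out that Proposition~\ref{prop:neat}, despite appearing after this corollary in the text, is proved without invoking $\Q$-factoriality of $W$: its proof uses only the commutative diagram~(\ref{embediag}) and the identification $\widehat{i}^{-1}(\widehat{D}_i)=(x_i|_{\widehat{X}})$, neither of which needs the geometric (as opposed to good) quotient property. Similarly, the construction of $W$ as a good quotient in Theorem~\ref{thm:canonic-emb} is available to us. Hence no circularity arises in what follows.

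The core step is then the following chain. Because $X$ is a wMDS it is by definition $\Q$-factorial, so every Weil divisor on $X$ is $\Q$-Cartier; equivalently, the inclusion $\Pic(X)\hookrightarrow \Cl(X)$ has finite cokernel, i.e.\ $\rk\Pic(X)=\rk\Cl(X)=r$. By Proposition~\ref{prop:neat} the pullback $i^{*}:\Cl(W)\stackrel{\cong}{\longrightarrow}\Cl(X)$ is an isomorphism restricting to an isomorphism $\Pic(W)\cong\Pic(X)$. Transporting the equality of ranks through these isomorphisms yields $\rk\Pic(W)=\rk\Cl(W)=r$, so $\Pic(W)$ sits inside $\Cl(W)$ with finite index (modulo torsion).

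To finish I would invoke the standard toric dictionary. For a toric variety $W(\Sigma)$ without torus factors, as guaranteed by Theorem~\ref{thm:canonic-emb}, a torus-invariant Weil divisor $D=\sum a_i D_i$ is Cartier exactly when, on each cone $\sigma\in\Sigma$, there exists an $m_\sigma\in M$ with $\langle m_\sigma,\v_i\rangle=a_i$ for $\v_i\in\sigma(1)$. Hence $W$ is $\Q$-factorial if and only if every cone of $\Sigma$ is simplicial, and this is in turn equivalent to $\Pic(W)$ having finite index in $\Cl(W)$ (indeed, on a non-simplicial cone the rays are linearly dependent and obstruct solvability of the above linear system even over $\Q$). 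Combining this with the rank equality from the previous paragraph delivers the conclusion that $W$ is $\Q$-factorial.

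The only real obstacle is bookkeeping the logical order: one must verify carefully that Proposition~\ref{prop:neat} is provable from Theorem~\ref{thm:canonic-emb} without the $\Q$-factoriality statement of that theorem, so that it is legitimate to cite it here. Once that is noted the argument itself is essentially a two-line consequence of the toric characterization of simpliciality via $\rk\Pic=\rk\Cl$, and no further technicalities are required.
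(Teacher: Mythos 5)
Your argument is essentially identical to the paper's: the paper's proof is precisely the chain $\Pic(W)\otimes\Q\cong\Pic(X)\otimes\Q\cong\Cl(X)\otimes\Q\cong\Cl(W)\otimes\Q$ obtained from Proposition~\ref{prop:neat} and the $\Q$-factoriality of $X$, which is what you do (in rank form) before invoking the standard toric equivalence with simpliciality. Your remark on the logical ordering is well taken — the paper's proof of Theorem~\ref{thm:canonic-emb} forward-references this corollary, so your verification that Proposition~\ref{prop:neat} does not itself require $\Q$-factoriality of $W$ is exactly the check needed to make the circle of references legitimate.
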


\begin{proof}
  Recall that $X$ is $\Q$-factorial, then Proposition~\ref{prop:neat} gives
  $$\Pic(W)\otimes\Q\stackrel{i_\Q^*}{\cong}\Pic(X)\otimes\Q\cong\Cl(X)\otimes\Q\stackrel{(i_\Q^*)^{-1}}
  {\cong}\Cl(W)\otimes\Q\,.$$
\end{proof}

\begin{remark}
  The grading of $\Cox(W)\cong\K[\X]$ over $\Cl(W)$ is given by
  \begin{eqnarray*}
    \Cox(W)&=&\bigoplus_{\d\in\Cl(W)}\Cox(W)_\d\quad\text{with}\\
    \Cox(W)_\d&\cong&\K[\X]_\d:=\{x\in\K[\X]\,|\,\overline{x}=x+I\in\Cox(X)_{i^*(\d)}\}\,.
  \end{eqnarray*}
\end{remark}

\subsection{Bunch of cones} \label{ssez:Irr&bunches}

Let $X$ be a wMDS, $\X=\{x_1,\ldots,x_m\}$ be a Cox basis and $i:X\hookrightarrow W$ be the associated $\X$-canonical toric embedding. First of all notice that Proposition~\ref{prop:neat} implies that $X$ and $W$ have the same Picard number, given by $$r:=\rk\Cl(X)=\rk\Cl(W)$$
Moreover, recalling that there exist divisors $X_1,\dots,X_m\in K$ such that
$$ \forall\,i=1,\ldots,m\quad x_i+I=\xi_i+\I_\chi(X)\quad\text{and}\quad\xi_i\in H^0(X,\cO(X_i))$$
as exhibited in the proof of Proposition~\ref{prop:neat}, up to shrink $K$, one can assume
\begin{equation}\label{K-shrink}
  K=\Ls(X_1,\ldots,X_m)
\end{equation}
In fact, calling $\d_i=d_K(X_i)$, for every $D\in K$
\begin{equation*}
  d_K(D)\in\Cl(X)=\Ls(\d_1,\ldots,\d_m)\ \Longrightarrow\ [D]=\sum_i a_i\d_i\ \Longrightarrow\ D\sim \sum_i a_i X_i\,.
\end{equation*}
 Let us then set
\begin{equation*}
  \rk\Div_{\T}(W)=\rk K=m\quad,\quad n:=m-r>0\,.
\end{equation*}
Choose generators $d_1,\ldots d_r$, $\ve_1,\ldots,\ve_s$ for $\Cl(W)$ such that
\begin{equation*}
\Cl(X)\stackrel{(i^*)^{-1}}{\cong}\Cl(W)\cong F\oplus\Tors\cong\left(\bigoplus_{i=1}^r\Z\cdot d_i\right)\oplus\left(\bigoplus_{k=1}^s\Z\cdot\ve_i\right)\cong\Z^r\oplus\left(\bigoplus_{k=1}^s\Z/\tau_i\Z\right)\,.
\end{equation*}
Calling $d_W:\Div_\T(W)\twoheadrightarrow\Cl(W)$ the surjection defined by the exact sequence (\ref{Tdivisori}), there are induced decompositions $d_W=f_W\oplus \tau_W$ and $d_K=f_K\oplus \tau_K$ in free parts $f_W,f_K$ and torsion parts $\tau_W,\tau_K$. Consider the weight and torsion matrices defined in Remark~\ref{rems}~(4), that is
\begin{itemize}
  \item $Q=(\q_1\,\ldots\,\q_m)$ being a $r\times m$ weight matrix representing $f_W$ on the bases $\{D_j\}$ and $\{d_i\}$,
  \item $\Ga=(\boldsymbol\g_1\,\ldots\,\boldsymbol\g_m)$ being a $s\times m$ torsion matrix representing $\tau_W$ on the bases $\{D_j\}$ and $\{\ve_k\}$, where the $k$-th entry of $\boldsymbol\g_j$ is a class $\g_{kj}\in\Z/\tau_i\Z$.
\end{itemize}
The situation is then described by the following commutative diagram
 \begin{equation}\label{div-diagram-embedding}
      \xymatrix{0 \ar[r]&  M \ar[r]^-{div_W}_-{V^T}\ar[d]_-{\mathbf{I}_{n}} &
\Div_\T(W) \ar[r]^-{d_W}_-{Q\oplus\Ga}\ar[d]^-{i^\#}_-{\mathbf{I}_{m}} & \Cl(W) \ar[d]^-{i^*}_-{\mathbf{I}_{r}\oplus\overline{\mathbf{I}}_s} \ar[r]&0\\
0 \ar[r]& K_0 \ar[r]_-{V^T}&K\ar[r]^-{d_{K}}_-{Q\oplus\Ga} & \Cl (X) \ar[r]& 0 \\
 }
\end{equation}
The \emph{pseudo-effective cone} $\overline{\Eff}(W)$ is then given by the cone generated by the columns of $Q$ i.e.
$$\overline{\Eff}(W)=\langle Q\rangle\subseteq\Cl(W)\otimes\R=N^1(W)$$
It is well known that $\overline{\Eff}(W)$ supports a fan called the \emph{secondary fan} or \emph{GKZ subdivision} (see e.g. \cite[\S~15]{CLS}).

Consider the irrelevant ideal $\irr(X)$, defined in Definition~\ref{def:irr}, and let $f$ be a homogeneous generator of $\irr(X)$. Since $\X$ is a Cox basis, $f$ can be expressed as a product of $\overline{x}_1,\ldots,\overline{x}_m$, up to a constant in $\K^*$ (see e.g. the proof of \cite[Thm.~1.5.3.7]{ADHL}), meaning that $\widetilde{\irr}=\pi_\X^{-1}(\irr(X))$ is a homogeneous monomial ideal in $\K[\X]$.
On the other hand, one has an explicit description of the irrelevant ideal of $W$, given by
\begin{eqnarray}\label{Irr}
\nonumber
  \I rr(W)&=&\widetilde{\I rr}=\left(\prod_{\rho\not\in\s}x_\rho\,|\,\s\in\Si\right)\\
  &=&\left(\prod_{\rho\not\in\s}x_\rho\,|\,\s\in\Si_{\max}\right)\subseteq \K[\X]\cong\Cox(W)
\end{eqnarray}
where $\Si_{\max}$ is the collection of maximal cones of $\Si$ (see e.g. \cite{Cox}).

\begin{remark}\label{rem:corrispondenze}
Notice that:
\begin{itemize}
  \item[(a)] generators of $\I rr(W)$ are in one to one correspondence with cones of the \emph{bunch} $\B$, associated via Gale duality with the fan $\Si$ of $W$ (see \cite{Berchtold-Hausen}, \cite[\S~2.2.1]{ADHL}); recalling notation introduced in \ref{sssez:lista}, this correspondence can be made more explicit as follows
\begin{equation}\label{irr vs bunch}
  \I rr(W)=\left(\prod_{i\not\in I}x_i\,|\,I\in\I_\Si\right)\,\longleftrightarrow\,\B=\left\{\langle Q^I\rangle\,|\,I\in\I_\Si\right\}
\end{equation}
  \item[(b)] the common intersection of all cones in $\B$ gives the cone $\Nef(W)$, generated by the classes of nef divisors inside $\overline{\Eff}(W)$ \cite[Thm.~10.2]{Berchtold-Hausen}, that is
      \begin{equation*}
  \Nef(W)=\bigcap_{I\in\I_\Si}\langle Q^I\rangle=\bigcap_{I\in\I_{\Si_{\max}}}\langle Q^I\rangle
\end{equation*}
where the last equality comes from (\ref{Irr}) and (\ref{irr vs bunch});
\item[(c)] let $\overline{\Mov}(W)$ be the closed cone defined by classes of \emph{movable} divisors of $W$ \cite[(15.1.7)]{CLS}; then $\overline{\Mov}(W)=\bigcap_{i=1}^m\langle Q^{\{i\}}\rangle$ \cite[Prop.~15.2.4]{CLS}, so giving, by the previous item (b),
      $$ \Nef(W)\subseteq\overline{\Mov}(W)\subseteq\overline{\Eff}(W)\,.$$
\end{itemize}
\end{remark}

\begin{remark}\label{rem:bunched}
  Keeping in mind diagram (\ref{div-diagram-embedding}) and Proposition~\ref{prop:irr}, correspondence (\ref{irr vs bunch}) shows that a wMDS $X$ is actually a \emph{variety arising from a bunched ring}, in the sense of \cite[Construction~3.2.1.3]{ADHL}. In particular the bunched ring associated with $X$ is the triple $(\Cox(X),\X,\B)$.
\end{remark}

\begin{proposition}\label{prop:Qfactorial}
Let $X$ be a wMDS, $\X$ be a Cox basis and $i:X\hookrightarrow W$ be the associated $\X$-canonical toric embedding. Then the isomorphism $i^*$, between class groups, extends to give an isomorphism $i^*_\R:N^1(W)\stackrel{\cong}{\longrightarrow}N^1(X)$ such that
\begin{equation*}
  i^*_\R\left(\overline{\Eff}(W)\right)=\overline{\Eff}(X)\quad,\quad i^*_\R\left(\overline{\Mov}(W)\right)=\overline{\Mov}(X)\quad,\quad
  i^*_\R\left(\Nef(W)\right)=\Nef(X)\,.
\end{equation*}
\end{proposition}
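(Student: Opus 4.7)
My strategy is to first construct the $\R$-linear extension $i^*_\R$ by tensoring the class-group isomorphism of Proposition~\ref{prop:neat} with $\R$ (which kills the torsion), and then to verify the three cone equalities by exhibiting, in each case, a common combinatorial description in terms of the weight matrix $Q$. The key input, already encoded in commutative diagram~(\ref{div-diagram-embedding}) together with the identification $\cD_j=i^{-1}(D_j)$ obtained in the proof of Proposition~\ref{prop:neat}, is that the columns $\q_j$ of $Q$ simultaneously represent $[D_j]\in\Cl(W)$ and $[\cD_j]\in\Cl(X)$ modulo $i^*_\R$.

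For the pseudo-effective cones I would proceed as follows. By construction $\overline{\Eff}(W)=\langle Q\rangle$, and the inclusion $i^*_\R\langle Q\rangle\subseteq\overline{\Eff}(X)$ is immediate since each $[\cD_j]$ is effective. For the reverse inclusion I would use that $\Cox(X)_\d\cong H^0(X,\cO_X(D))$ for any $D\in K$ with $[D]=\d$, a consequence of Proposition~\ref{prop:Ichi}: a class $\d$ is then effective precisely when $\Cox(X)_\d$ contains a nonzero element, and since $\X$ is a Cox basis any such element is a polynomial in $\overline{x}_1,\ldots,\overline{x}_m$. $\Cl(X)$-homogeneity forces every monomial $\prod_j\overline{x}_j^{a_j}$ occurring in this polynomial to have multidegree $\d$, giving $\d=\sum_j a_j[\cD_j]\in\langle Q\rangle$. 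For the movable cones the argument runs in parallel, mimicking on $X$ the toric proof of \cite[Prop.~15.2.4]{CLS}: by the Cox-basis property, an effective representative of $\d$ whose support avoids $\cD_j$ is precisely a $\Cl(X)$-homogeneous polynomial of multidegree $\d$ in $\K[\X]$ in which $x_j$ does not appear, so that $\overline{\Mov}(X)=\bigcap_j\langle Q^{\{j\}}\rangle$ under $i^*_\R$, exactly as for $W$.

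For the nef cones I would invoke the bunched-ring framework of \cite[\S3.2]{ADHL} recalled in Remark~\ref{rem:bunched}: the bunch $\B$ associated with $X$ via the bunched ring $(\Cox(X),\X,\B)$ coincides with the bunch of the canonical ambient variety $W$, since by construction both are read off from the same Cox basis $\X$ and from the same irrelevant ideal via the correspondence~(\ref{irr vs bunch}). As, for a variety arising from a bunched ring, the nef cone is by definition the intersection of the cones of the bunch, Remark~\ref{rem:corrispondenze}(b) applied both to $W$ and to $X$ yields $i^*_\R\Nef(W)=\bigcap_{I\in\I_\Si}\langle Q^I\rangle=\Nef(X)$. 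I expect the main obstacle to be precisely this last identification: for a non-projective wMDS the classical Kleiman-type intersection-theoretic description of $\Nef(X)$ is not directly available, so one must genuinely rely on the bunched-ring viewpoint and check that the ADHL formula continues to hold without any projectivity assumption, which is indeed the case since the notion of bunched ring is purely combinatorial.
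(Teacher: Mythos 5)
Your proposal is correct and follows essentially the same route as the paper: the paper's proof simply reads the statement off diagram~(\ref{div-diagram-embedding}) and cites the bunched-ring results of \cite[\S~3.3.2]{ADHL} (Prop.~3.3.2.9 and Prop.~3.2.5.4(v)) via Remark~\ref{rem:bunched}, which is exactly the framework you invoke for the nef cone and whose proofs for $\overline{\Eff}$ and $\overline{\Mov}$ you are unpacking with the Cox-basis/homogeneous-monomial arguments. The only cosmetic caveat is that the identity $\Nef(X)=\bigcap_{I\in\I_\Si}\langle Q^I\rangle$ for a variety arising from a bunched ring is a theorem of \cite{ADHL}, not a definition, but since you cite precisely the result the paper itself relies on, this does not affect correctness.
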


\begin{proof} The statement is a direct consequence of diagram (\ref{div-diagram-embedding}). For any further detail, the interested reader is referred to \cite[\S~3.3.2]{ADHL} (in particular Prop.~3.3.2.9 and Prop.~3.2.5.4(v)), keeping in mind that a wMDS is a variety arising from a bunched ring, as just observed in Remark~\ref{rem:bunched}.
\end{proof}

\begin{corollary}\label{cor:Qpositive}
  Let $X$ be a complete wMDS endowed with a Cox basis $\X$ and $i:X\hookrightarrow W$ be its $\X$-canonical toric embedding. Then $\overline{\Eff}(X)\cong\overline{\Eff}(W)$ are strongly convex, full dimensional cones in $N^1(X)\cong N^1(W)$, respectively. In particular the weight matrix $Q$ in diagram (\ref{div-diagram-embedding}) turns out to be a $W$-matrix and consequently the fan matrix $V$ is a $F$-matrix. Moreover $Q$ can then be given by a positive matrix, meaning that both $\overline{\Eff}(X)\cong\overline{\Eff}(W)$ can be thought of as subcones of the positive orthant in $N^1(X)\cong N^1(W)$, respectively.
\end{corollary}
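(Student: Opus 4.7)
The plan is to first establish strong convexity and full-dimensionality of $\overline{\Eff}(X)$ directly from completeness of $X$, then transport both properties to $\overline{\Eff}(W)=\langle Q\rangle$ through the isomorphism $i^\ast_\R$ of Proposition~\ref{prop:Qfactorial}. The matrix-theoretic statements that $Q$ is a $W$-matrix and $V$ is an $F$-matrix will then follow from Gale duality, and the final assertion on positivity is an immediate consequence of $W$-positivity of $Q$.

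For strong convexity, suppose that both $[D]$ and $-[D]$ lie in $\overline{\Eff}(X)$, and choose effective representatives $D_1\sim D$ and $D_2\sim -D$, so that $D_1+D_2=(f)$ for some $f\in\K(X)^\ast$. Since $(f)\geq 0$, $f$ is regular on $X$, and completeness of $X$ forces $f$ to be a nonzero constant; hence $(f)=0$ and $D_1=D_2=0$, giving $[D]=0$. For full-dimensionality, the reduction in~\eqref{K-shrink} gives $K=\Ls(X_1,\ldots,X_m)$, so $\Cl(X)$ is generated by the classes $[\cD_i]$, with $\cD_i$ the prime effective divisors produced in the proof of Proposition~\ref{prop:neat}; these classes span $N^1(X)$ and all lie in $\overline{\Eff}(X)$. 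Applying $i^\ast_\R$ transfers both properties to $\overline{\Eff}(W)=\langle Q\rangle$.

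Once $\langle Q\rangle$ is known to be strongly convex and full-dimensional, I would verify each of conditions (a)--(f) of Definition~\ref{def:Wmatrice} in turn: (a) is full-dimensionality of $\langle Q\rangle$; (b) follows from the Gale-dual description of $\mathcal{L}_r(Q)$ as a saturated sublattice of $\Z^m$; (c) is the Gale-dual reformulation of strong convexity of $\langle Q\rangle$ as existence of a positive basis of $\mathcal{L}_r(Q)$ (compare~\cite[\S~3.1]{RT-LA&GD}), equivalently as $F$-completeness $\langle V\rangle=N_\R$; (d) asserts that no column of $Q$ vanishes, and a vanishing column would force $\overline{x}_i$ to be homogeneous of torsion class and hence, in the integral domain $\Cox(X)$, either nilpotent or a unit, contradicting the Cox-generator hypothesis; finally, (e)--(f) translate under Gale duality into the statements, built into the combinatorial definition of a fan, that the primitive ray generators of $\Si$ are nonzero and pairwise non-proportional. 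Dually, $V=\G(Q)$ is then an $F$-matrix. The positivity assertion follows at once: an integer change of basis on the free part of $\Cl(W)$ realizing a positive basis $\{\b_1,\ldots,\b_r\}$ of $\mathcal{L}_r(Q)$ as the rows of the new weight matrix places $\overline{\Eff}(W)$, and hence $\overline{\Eff}(X)$, inside the positive orthant of $N^1(W)\cong N^1(X)$.

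The main technical point is the Gale-duality translation in step (c), namely the equivalence between strong convexity of $\langle Q\rangle$ and the existence of a positive basis for $\mathcal{L}_r(Q)$; this is the genuinely non-trivial ingredient and relies on~\cite[\S~3.1]{RT-LA&GD}. All remaining items are routine bookkeeping once that equivalence is available.
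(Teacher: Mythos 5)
Your proposal is correct and follows essentially the same route as the paper: strong convexity of $\overline{\Eff}(X)$ from completeness (an effective principal divisor on a complete irreducible variety must vanish), full-dimensionality from $\rk(Q)=r$, then a term-by-term verification of conditions (a)--(f) of Definition~\ref{def:Wmatrice}, with (b) obtained by dualizing the top row of diagram~(\ref{div-diagram-embedding}), (c) and the final positivity claim deduced from strong convexity, (e)--(f) from the fan combinatorics, and $V$ an $F$-matrix by Gale duality. The only noticeable variation is your argument for (d), which runs through the algebra (an $\overline{x}_i$ of torsion degree would have a power in $\Cox(X)_0\cong\K$ and hence be a unit, contradicting the Cox-generator hypothesis) rather than the paper's remark that a zero column would make the corresponding invariant divisor principal; both work, and yours treats the torsion case slightly more explicitly.
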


\begin{proof}
$X$ is complete and irreducible meaning that $H^0(X,\cO_X)\cong\K$. Therefore an effective and principal divisor is necessarily zero. Assume that $[D],[D']\in\overline{\Eff}(X)$ with $D,D'$ effective divisors and $[D]+[D']=[D+D']=0$. Then $D+D'$ is an effective and principal divisor, that is $D+D'=0$. But $D$ and $D'$ are effective, then $D=D'=0$. This suffices to show that $\overline{\Eff}(X)$ is strongly convex. Clearly $\overline{\Eff}(X)$ is full dimensional since $Q$ has maximal rank $r=\dim N^1(X)$.

Recall Definition \ref{def:Wmatrice} of a W-matrix. We just proved condition (a). For (c), up to a linear automorphism of $N^1(W)$, one can always assume $\overline{\Eff}(W)=\langle Q\rangle$ contained in the positive orthant of $N^1(W)$, proving also the last part of the statement. For (b), dualize the upper exact sequence in diagram (\ref{div-diagram-embedding}), getting the following exact sequence
\begin{equation*}
  \xymatrix{0\ar[r]&\Hom(\Cl(W),\Z)\ar[r]^-{d_W^\vee}_-{Q^T}&\Hom(\Div_\T(W),\Z)\ar[r]^-{div_W^\vee}_-{V}&N}
\end{equation*}
in which $\Ls_r(Q)\cong\ker div_W^\vee\leq\Hom(\Div_\T(W),\Z)\cong\Z^m$. Then $\Ls_r(Q)$ is a free subgroup of $\Z^m$. For (d), a zero column in $Q$ means that the corresponding torus invariant divisor $D_\rho$ is principal, which cannot happen. Last conditions (e) and (f) follow by observing that $Q$ is a Gale dual matrix of the fan matrix $V$ of $W$: those conditions correspond to say that the zero vector can't generate a ray of $\Si$ and distinct rays in $\Si$ can't be generated by proportional vectors, respectively.

Finally $V$ turns out to be a $F$-matrix by \cite[Prop.~3.12.2]{RT-LA&GD}.
\end{proof}

\subsection{The GKZ-decomposition: cones, cells and chambers}\label{ssez:GKZ}
Recall that, given a toric variety $W$, both $\overline{\Eff}(W)$ and its subcone $\overline{\Mov}(W)$ support a fan structure, the so called \emph{Gelfand-Kapranov-Zelevinsky (GKZ)-decomposition} or \emph{secondary fan} (see \cite[\S\,14.4]{CLS} and references therein). For quickly visualize GKZ-cones composing such a fan, consider \cite[Def.\,1.7]{RT-Qfproj}. Here we will use the interpretation of GKZ-cones given in \cite[\S\,2.2.2]{ADHL}, to which the interested reader is referred for any further detail.

\subsubsection{Notation} Let $W$ be a $n$-dimensional toric variety of Picard number $r$ and $Q$ be a weight matrix of $W$. Then $\overline{\Eff}(W)=\langle Q\rangle$ and $\overline{\Mov}(W)=\bigcap_{i=1}^m\langle Q^{\{i\}}\rangle$, as observed in the previous section \S~\ref{ssez:Irr&bunches}. For this reason, in the following we will often denote the polyhedral cones $\langle Q\rangle$ and $\bigcap_{i=1}^m\langle Q^{\{i\}}\rangle$ by $\Eff(Q)$ and $\Mov(Q)$, respectively.

\subsubsection{Assumption}
In the following we will always assume that $\Mov(Q)$ is of full dimension $r=\rk(Q)$ inside $\Eff(Q)$. This is certainly the case when $Q$ is a weight matrix of the canonical ambient toric variety of a wMDS \cite[Thm.\,2.2.2.6\,(i)]{ADHL}.

 \begin{definition}[GKZ-cones and decomposition]\label{def:GKZ} In the above notation, for every $w\in\langle Q\rangle$, define the \emph{GKZ-cone} associated with $w$ as the following polyhedral cone
 \begin{eqnarray*}
 \g_w&:=&\bigcap\{\langle Q^I \rangle\,|\,\forall\,I\subseteq\{1,\ldots,n+r\}\,:\,w\in\langle Q^I \rangle\}\\
 &=&\bigcap\{\langle Q^J \rangle\,|\,\forall\,J\subseteq\{1,\ldots,n+r\}\,:\,w\in\Relint\langle Q^J \rangle\}
 \end{eqnarray*}
 The \emph{GKZ-de\-com\-po\-si\-tion} of $\Eff(Q)$ is the collection $\Ga(Q)= \{\g_w\,|\,w\in\langle Q\rangle\}$ of GKZ-cones. The same construction on $\Mov(Q)$ defines the GKZ-decomposition $\Ga(Q)|_{\Mov}$ of $\Mov(Q)$.
 \end{definition}

 \begin{remark}[Construction 2.2.2.1 and Thm.~2.2.2.2 in \cite{ADHL}] Notice that:
 \begin{itemize}
   \item[(a)] if $w\in\langle Q\rangle$ then $w\in\Relint(\g_w)$;
   \item[(b)] if $\g\in\Ga(Q)$ then $\g=\g_w$, for every $w\in\Relint(\g)$;
   \item[(c)] $\Ga(Q)$ and $\Ga(Q)|_{\Mov}$ turn out to be fans with support $\Eff(Q)=\langle Q\rangle$ and $\Mov(Q)=\bigcap_{i=1}^m\langle Q^{\{i\}}\rangle$, respectively.
 \end{itemize}
    \end{remark}

With every cone $\g\in\Ga(Q)$ one can associate the following bunch of cones
 \begin{equation*}
   \B_{\g}:=\{\langle Q^I\rangle\,|\,I\subseteq\{1,\ldots,n+r\}\,:\,\Relint(\g)\subseteq\Relint\langle Q^I\rangle\}\,.
 \end{equation*}
Let $V$ be a Gale dual matrix of $Q$ giving a fan matrix of $W$. Then $\B_{\g}$ determines, by Gale duality, the following collection of convex cones
\begin{equation}\label{Si_g}
\Si_{\g}:=\{\langle V_I\rangle\,|\,\langle Q^I\rangle\in\B_{\g}\}
\end{equation}
which in general is a \emph{quasi-fan}, meaning that it may admit a cone containing a line.

\begin{proposition}\label{prop:quasiproiettivo} Given a GKZ-cone $\g\in\Ga(Q)$ the following facts hold:
\begin{enumerate}
  \item the support $|\Si_\g|=\langle V\rangle$ \cite[Prop.~2.2.4.1]{ADHL};
  \item $\Si_\g$ is a fan if and only if $\Relint(\g)\subseteq\Relint(\Mov(Q))$ \cite[Thm.~2.2.2.6\,(ii)]{ADHL};
  \item if $\Si_\g$ is a fan then the associated toric variety $X(\Si_\g)$ is quasi-projective \cite[Thm.~2.2.2.2\,(ii)]{ADHL};
  \item if $\Si_\g$ is a fan then $\g$ is full-dimensional inside $\Mov(Q)$ if and only if $\Si_\g$ is simplicial \cite[Thm.~2.2.2.6\,(iii)]{ADHL}. In particular $\Si_\g\in\SF(V)$\,.
\end{enumerate}
\end{proposition}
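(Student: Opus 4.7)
The plan is to prove each of the four claims by unpacking the Gale duality correspondence between the bunch of cones $\mathcal{B}_\gamma$ on the $Q$-side and the collection $\Sigma_\gamma$ on the $V$-side, invoking the cited results of \cite{ADHL} when the bookkeeping is already done there. Throughout, recall that $V$ is $n\times m$ with $Q$ an $r\times m$ Gale dual, $m=n+r$, and for $I\subseteq\{1,\ldots,m\}$ the submatrices $V_I$ and $Q^I$ have complementary column sets; moreover, since $\mathrm{Mov}(Q)$ is full-dimensional in $\mathrm{Eff}(Q)$, the cones appearing in $\mathcal{B}_\gamma$ all have dimension $r$ when $\gamma$ meets $\mathrm{Relint}(\mathrm{Mov}(Q))$.

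For \textbf{(1)}, the plan is to show that $\langle V\rangle=\bigcup_{\langle Q^I\rangle\in\mathcal{B}_\gamma}\langle V_I\rangle$. Pick $w\in\mathrm{Relint}(\gamma)$; by definition of $\mathcal{B}_\gamma$, a cone $\langle Q^I\rangle$ lies in $\mathcal{B}_\gamma$ precisely when $w\in\mathrm{Relint}\langle Q^I\rangle$. On the dual side this translates, via the standard Gale-duality exchange (see \cite[Prop.~2.2.4.1]{ADHL}), into the covering property of the complementary cones $\langle V_I\rangle$ inside $\langle V\rangle$: any point of $\langle V\rangle$ can be expressed as a non-negative combination of columns of $V$ that, read through the Gale relation, singles out an $I$ with $w\in\mathrm{Relint}\langle Q^I\rangle$.

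For \textbf{(2)} and \textbf{(4)} the key lemma to invoke is that, for a subset $I$, the cone $\langle V_I\rangle$ is strongly convex iff $\langle Q^I\rangle$ is $r$-dimensional, and $\langle V_I\rangle$ is simplicial iff in addition $|I|=n$ (equivalently, $|\{1,\ldots,m\}\setminus I|=r$). For (2), assuming $\gamma\subseteq\mathrm{Relint}(\mathrm{Mov}(Q))$ guarantees $w\notin\partial\langle Q^{\{i\}}\rangle$ for every $i$, which in turn prevents any $\langle Q^I\rangle\in\mathcal{B}_\gamma$ from being of lower dimension; the Gale-dual cones $\langle V_I\rangle$ are then strongly convex and, together with (1), they fit into a fan. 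Conversely, if some generator of $\gamma$ lies on the boundary of $\mathrm{Mov}(Q)$, some $\langle V_I\rangle\in\Sigma_\gamma$ contains a line and $\Sigma_\gamma$ is only a quasi-fan. For (4), full-dimensionality $\dim\gamma=r$ inside $\mathrm{Mov}(Q)$ forces every $\langle Q^I\rangle\in\mathcal{B}_\gamma$ to have dimension exactly $r$ with $\gamma\subseteq\mathrm{Relint}\langle Q^I\rangle$, hence $|\{1,\ldots,m\}\setminus I|=r$ and $|I|=n$, giving simpliciality; the converse is symmetric. In both cases the detailed statements are \cite[Thm.~2.2.2.6~(ii),(iii)]{ADHL}, which I would quote directly.

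For \textbf{(3)}, once $\Sigma_\gamma$ is a fan, the GIT construction identifies $X(\Sigma_\gamma)$ with the good quotient $\widehat{W}_\gamma/\!\!/G$ linearized by any $w\in\mathrm{Relint}(\gamma)$; this class pulls back from $W$ to an ample (in particular $\Q$-Cartier) class on $X(\Sigma_\gamma)$, so the variety is quasi-projective. This is \cite[Thm.~2.2.2.2~(ii)]{ADHL}. The main obstacle in the whole proposition is the careful tracking of the Gale-duality dictionary in (2) and (4), but since all four statements are literal specializations of results in \cite[\S\,2.2]{ADHL} to our GKZ-cone $\gamma$, the proof reduces to verifying that the hypotheses align and then quoting the appropriate statement in each case.
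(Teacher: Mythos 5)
The paper offers no proof of this proposition at all: each of the four items is stated with an inline citation to the corresponding result of \cite{ADHL}, which is exactly where your proposal ends up, so your approach coincides with the paper's. (One minor caveat: in your sketch of (4), the inference that every $\langle Q^I\rangle\in\B_\g$ must satisfy $|I|=n$ is false --- e.g.\ $I=\emptyset$ gives $\langle Q\rangle\in\B_\g$ whenever $\Relint(\g)\subseteq\Relint\langle Q\rangle$, and simpliciality of $\Si_\g$ only concerns the \emph{minimal} cones of the bunch, dual to the maximal cones of the fan --- but since you defer the actual argument to \cite[Thm.~2.2.2.6\,(iii)]{ADHL}, this slip does not affect the conclusion.)
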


\begin{definition}[Geometric cells and chambers]\label{def:cell}
A GKZ-cone $\g\in\Ga(Q)|_{\Mov}$ is called a \emph{geometric cell} (or simply a \emph{g-cell}) if there exists a fan $\Si\in\SF(V)$ such that $\g=\g_\Si:=\bigcap_{I\in\I_\Si}\langle Q^I\rangle$.  A full dimensional geometric cell inside $\Mov(Q)$ is called a \emph{geometric chamber} (or simply a \emph{g-chamber}).
\end{definition}

Recalling Remark\,\ref{rem:corrispondenze}\,(b), if $\langle V\rangle=\R^n$ then a GKZ-cone $\g$ is  a g-cell if it is the $\Nef$ cone of a $\Q$-factorial and complete toric variety: this is the case e.g. when $V$ is a fan matrix of the canonical ambient toric variety $W$ of a complete wMDS $X$.

\noindent In general a GKZ-cone may not be a g-cell: e.g., as a consequence of considerations we will give in the following \S\,\ref{ssez:proj-emb}, if $\rk(Q)=2$ then every 1-dimensional GKZ-cone cannot be a g-cell, since every $\Q$-factorial complete toric variety of Picard number 2 is projective, thus it admits a full-dimensional $\Nef$ cone \cite[Thm.\,3.2]{RT-r2proj}.

\begin{proposition}\label{prop:cell=camera}
  A geometric cell $\g_{\Si}$ is a geometric chamber if and only if $\Si_{\g_{\Si}}=\Si$.
\end{proposition}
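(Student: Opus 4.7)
The plan is to prove the two implications separately, using Proposition~\ref{prop:quasiproiettivo} for the ``if'' direction and a standard support-plus-disjoint-relative-interior argument for fans in the ``only if'' direction. Throughout, I set $\g := \g_\Si$ for brevity.

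For the ``only if'' direction, assume $\g$ is a geometric chamber, so that $\g$ is full-dimensional inside $\Mov(Q)$, and hence full-dimensional inside $N^1(W)_\R$. I would first show $\Si \subseteq \Si_\g$: given any $I \in \I_\Si$, the definition $\g = \bigcap_{J \in \I_\Si}\langle Q^J\rangle$ yields $\g \subseteq \langle Q^I \rangle$, so $\Relint(\g) \subseteq \langle Q^I \rangle$; but $\Relint(\g)$ is an open subset of $N^1(W)_\R$, hence $\langle Q^I \rangle$ must be full-dimensional and $\Relint(\g) \subseteq \Relint\langle Q^I \rangle$, meaning $\langle Q^I \rangle \in \B_\g$ and $\langle V_I \rangle \in \Si_\g$. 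For the reverse inclusion, Proposition~\ref{prop:quasiproiettivo}(1)--(2) gives that $\Si_\g$ is a fan with $|\Si_\g| = \langle V\rangle = |\Si|$; then if some $\s' \in \Si_\g \setminus \Si$ existed, pick $p \in \Relint(\s')$, note $p \in |\Si|$, take $\s \in \Si$ with $p \in \Relint(\s)$; by what was shown, $\s \in \Si_\g$, so the two cones $\s, \s'$ of the fan $\Si_\g$ share a point in their relative interiors, forcing $\s = \s'$, a contradiction.

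For the ``if'' direction, assume $\Si_\g = \Si$. Since $\Si \in \SF(V)$ is a (simplicial) fan, the ``only if'' part of Proposition~\ref{prop:quasiproiettivo}(2) gives $\Relint(\g) \subseteq \Relint(\Mov(Q))$, so $\g$ lives in $\Ga(Q)|_{\Mov}$; and since $\Si_\g = \Si$ is simplicial, the ``if'' part of Proposition~\ref{prop:quasiproiettivo}(4) shows that $\g$ is full-dimensional inside $\Mov(Q)$, which is exactly the definition of a geometric chamber.

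The only delicate point is the upgrade from $\Relint(\g) \subseteq \langle Q^I \rangle$ to $\Relint(\g) \subseteq \Relint\langle Q^I \rangle$ in the forward direction: this is precisely where I would invoke the hypothesis that $\g$ is a geometric \emph{chamber} rather than merely a geometric cell, since full-dimensionality of $\g$ in $N^1(W)_\R$ is what makes $\Relint(\g)$ topologically open and hence forces $\langle Q^I \rangle$ itself to be full-dimensional. Everything else is a bookkeeping exercise built on the already-established correspondence between GKZ-cones and bunches via Gale duality.
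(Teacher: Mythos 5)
Your proof is correct and follows essentially the same route as the paper's: full-dimensionality forces $\Relint(\g_\Si)\subseteq\Relint\langle Q^I\rangle$, hence $\B\subseteq\B_{\g_\Si}$ and $\Si\subseteq\Si_{\g_\Si}$ with equality because $\Si\in\SF(V)$, while the converse is exactly the appeal to Proposition~\ref{prop:quasiproiettivo}\,(4). The only difference is that you spell out two steps the paper leaves implicit (the openness argument upgrading containment to containment of relative interiors, and the disjoint-relative-interiors argument showing $\Si\subseteq\Si_{\g_\Si}$ cannot be strict), which is a welcome clarification rather than a deviation.
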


\begin{proof}
  Assume that $\g:=\g_\Si=\bigcap_{I\in\I_\Si}\langle Q^I\rangle$ is a g-chamber. Then $$\forall\,I\in\I_\Si\quad\Relint(\g)\subseteq\Relint\langle Q^I\rangle\ \Longrightarrow\ \B\subseteq\B_\g$$
  where $\B$ is the bunch of cones associated with $\Si$ by Gale duality. This means that $\Si\subseteq\Si_\g$, that is $\Si=\Si_\g$, as $\Si\in\SF(V)$.

  Conversely if $\Si=\Si_\g$ then $\B=\B_\g$. Since $\Si$ is simplicial, Proposition\,\ref{prop:quasiproiettivo}\,(4) guarantees that $\g$ is full dimensional.
\end{proof}

\begin{proposition}\label{prop:camera}
   Every full dimensional GKZ-cone $\g\subseteq\Mov(Q)\cong\overline{\Mov}(W)\cong\overline{\Mov}(X)$ is a geometric chamber.
  \end{proposition}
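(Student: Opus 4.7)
\medskip
\noindent\textbf{Proof plan.}
The plan is to produce a fan $\Si\in\SF(V)$ with $\g=\g_\Si$, so that $\g$ is a g-cell, and then to invoke full-dimensionality to promote it to a g-chamber. The natural candidate for $\Si$ is the quasi-fan $\Si_\g$ associated to $\g$ by (\ref{Si_g}), so the first task is to verify it is actually a simplicial fan in $\SF(V)$, and the second (and main) task is to identify $\g$ with $\g_{\Si_\g}$.

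First I would record that since $\g\subseteq\Mov(Q)$ is full-dimensional and $\Mov(Q)$ itself is full-dimensional in $\Eff(Q)$ (by the Assumption following Definition~\ref{def:GKZ}), one has $\Relint(\g)\subseteq\Relint(\Mov(Q))$. Proposition~\ref{prop:quasiproiettivo}(2) then guarantees that $\Si_\g$ is a bona fide fan, and Proposition~\ref{prop:quasiproiettivo}(4) upgrades it to a simplicial fan belonging to $\SF(V)$ (the support condition coming from \ref{prop:quasiproiettivo}(1)). Thus $\g_{\Si_\g}=\bigcap_{I\in\I_{\Si_\g}}\langle Q^I\rangle$ is a well-defined g-cell.

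The core step is the equality $\g=\g_{\Si_\g}$. By construction of $\Si_\g$, the index set $\I_{\Si_\g}$ consists exactly of those $I$ with $\Relint(\g)\subseteq\Relint\langle Q^I\rangle$, so $\g_{\Si_\g}=\bigcap\B_\g$. On the other hand, picking any $w\in\Relint(\g)$, Definition~\ref{def:GKZ} together with the remark following it gives
\[
\g=\g_w=\bigcap\bigl\{\langle Q^J\rangle\,\bigm|\,w\in\Relint\langle Q^J\rangle\bigr\}.
\]
I would then prove the equivalence: \emph{for $w\in\Relint(\g)$, one has $w\in\Relint\langle Q^J\rangle$ if and only if $\Relint(\g)\subseteq\Relint\langle Q^J\rangle$.} The nontrivial direction uses the standard convex-geometry fact that whenever two convex cones meet in their relative interiors, $\Relint(C_1\cap C_2)=\Relint(C_1)\cap\Relint(C_2)$; applied to $\g$ and $\langle Q^J\rangle$ (noting $\g\subseteq\langle Q^J\rangle$ because $\langle Q^J\rangle$ appears in the intersection defining $\g_w=\g$), this forces $\Relint(\g)\subseteq\Relint\langle Q^J\rangle$. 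With this equivalence the two intersections coincide, proving $\g=\g_{\Si_\g}$.

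I expect this relative-interior bookkeeping to be the only real obstacle, and it is essentially a one-line convex-geometry observation. Once $\g=\g_{\Si_\g}$ with $\Si_\g\in\SF(V)$, Definition~\ref{def:cell} says $\g$ is a g-cell; since $\g$ was assumed full-dimensional inside $\Mov(Q)$, the same definition certifies it as a g-chamber, concluding the proof. (As a sanity check, the equality $\Si_{\g_{\Si_\g}}=\Si_\g$ is then automatic, which is consistent with the characterization in Proposition~\ref{prop:cell=camera}.)
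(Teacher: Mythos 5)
Your proposal is correct and follows essentially the same route as the paper: both take $\Si_\g$ as the witness fan, show it lies in $\SF(V)$ (you via Proposition~\ref{prop:quasiproiettivo}(1),(2),(4), the paper by observing directly that all cones of $\B_\g$ are full dimensional and invoking \cite[Prop.~2.2.4.1]{ADHL}), and then identify $\g$ with $\bigcap_{I\in\I_{\Si_\g}}\langle Q^I\rangle$. Your relative-interior argument simply fills in the last identification, which the paper asserts without detail.
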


 \begin{proof}
For every $w\in \Relint\g$, one has $\g_w=\g$, meaning that every cone composing the associated bunch $\B_\g$ is full dimensional. Therefore the associated fan $\Si_\g$ is simplicial. By \cite[Prop.~2.2.4.1]{ADHL} the support of $\Si_\g$ is given by $\langle V\rangle$ and the latter fills up the whole $\R^n$. Then $\Si_\g\in\SF(V)$ and $\g=\bigcap_{I\in\I_{\Si_\g}}\langle Q^I\rangle$ is a g-chamber.
 \end{proof}

\begin{proposition}\label{prop:nefècella}
The $\Nef$ cone of a  $\Q$-factorial and complete toric variety is always a geometric cell.
\end{proposition}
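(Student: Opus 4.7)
The plan is to verify the three clauses of Definition \ref{def:cell} for the cone $\gamma=\Nef(W)$: (i) $\gamma=\gamma_\Sigma$ for some $\Sigma\in\SF(V)$; (ii) $\gamma\subseteq\Mov(Q)$; (iii) $\gamma\in\Gamma(Q)$, i.e.\ it is itself a GKZ-cone. Write $W=W(\Sigma)$ with fan matrix $V$ and weight matrix $Q=\G(V)$. Since $W$ is $\Q$-factorial and complete, $\Sigma$ is simplicial and $|\Sigma|=\langle V\rangle=\R^n$, so $\Sigma\in\SF(V)$. Remark \ref{rem:corrispondenze}(b) then gives $\Nef(W)=\bigcap_{I\in\I_\Sigma}\langle Q^I\rangle=\gamma_\Sigma$, establishing (i), while Remark \ref{rem:corrispondenze}(c) yields (ii). Everything thus reduces to (iii).

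For (iii), I would first exhibit $\gamma_\Sigma$ as a finite union of GKZ-cones. Given any $p\in\gamma_\Sigma$, one has $p\in\langle Q^I\rangle$ for every $I\in\I_\Sigma$, so the index family defining $\gamma_p$ contains $\I_\Sigma$ and
$$\gamma_p=\bigcap_{J:\,p\in\langle Q^J\rangle}\langle Q^J\rangle\;\subseteq\;\bigcap_{I\in\I_\Sigma}\langle Q^I\rangle=\gamma_\Sigma.$$
Since there are only finitely many GKZ-cones, $\gamma_\Sigma=\bigcup_{p\in\gamma_\Sigma}\gamma_p$ is a finite union of GKZ-cones all contained in $\gamma_\Sigma$. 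Setting $d:=\dim\gamma_\Sigma$, at least one summand $\gamma_{\max}$ must have dimension $d$; otherwise the union, being a finite union of sets of dimension $<d$, could not equal $\gamma_\Sigma$.

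Finally, I would argue $\gamma_{\max}=\gamma_\Sigma$, which settles (iii) and hence the proposition. The relative interior $\Relint(\gamma_{\max})$ is open in $\Relint(\gamma_\Sigma)$ because both cones share the same $d$-dimensional linear span, and the complement $\Relint(\gamma_\Sigma)\setminus\Relint(\gamma_{\max})$ is covered by the relative interiors of GKZ-cones contained in $\gamma_\Sigma$ of strictly smaller dimension, hence is nowhere dense in $\Relint(\gamma_\Sigma)$. A Baire-type argument (two disjoint dense open subsets of the connected open set $\Relint(\gamma_\Sigma)$ cannot coexist) forces $\gamma_{\max}$ to be unique and $\Relint(\gamma_{\max})$ to be dense in $\Relint(\gamma_\Sigma)$; taking closures yields $\gamma_{\max}=\gamma_\Sigma$.

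The main obstacle is making the last step fully rigorous. One has to verify that the relative interiors of strictly lower-dimensional GKZ-cones contained in $\gamma_\Sigma$ really do form a nowhere dense subset of $\Relint(\gamma_\Sigma)$, and one has to check separately the degenerate cases $d=0,1$: when $d=0$ the cone $\gamma_\Sigma=\{0\}=\gamma_0$ is trivially a GKZ-cone, and when $d=1$ the cone $\gamma_\Sigma$ is a single ray whose only GKZ-sub-cones are $\{0\}$ and $\gamma_\Sigma$ itself, so the conclusion is immediate. The remaining case $d\geq 2$ is where the Baire/connectedness argument is actually needed.
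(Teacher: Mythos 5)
Your reduction is sound up to the last step: the identification $\Nef(W)=\gamma_\Sigma$ with $\Sigma\in\SF(V)$, the inclusion in $\Mov(Q)$ via Remark~\ref{rem:corrispondenze}, and the observation that $\gamma_\Sigma$ is a finite union of GKZ-cones (each $\gamma_p$ with $p\in\gamma_\Sigma$ being contained in $\gamma_\Sigma$) are all correct and match the paper's setup. The gap is in the final topological argument, and it is not merely a matter of rigor. Your claim that $\Relint(\gamma_\Sigma)\setminus\Relint(\gamma_{\max})$ is covered by \emph{strictly lower-dimensional} GKZ-cones already presupposes that $\gamma_{\max}$ is the unique $d$-dimensional GKZ-cone inside $\gamma_\Sigma$ --- which is exactly what the Baire/connectedness step is supposed to establish, so the reasoning is circular. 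Two disjoint $d$-dimensional GKZ-cones inside $\gamma_\Sigma$ would give two disjoint nonempty open subsets of $\Relint(\gamma_\Sigma)$, neither of them dense, and neither Baire's theorem nor connectedness rules this out (a quadrant split into two subcones by an interior ray is a convex union of two top-dimensional fan cones that is not itself a fan cone). Worse, your argument uses completeness only in step (i), while the conclusion genuinely depends on it: in Example~\ref{ex:ADHL-Ex3.2.5.8} the \emph{non-complete} canonical ambient toric variety $W$ has $\Nef(W)=\bigcap_{I\in\I_\Si}\langle Q^I\rangle=\langle\q_2,\q_4\rangle$, which is the union of the two full-dimensional chambers $\langle\q_2,\q_3\rangle$ and $\langle\q_3,\q_4\rangle$ and hence is \emph{not} a GKZ-cone. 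Any correct proof must therefore invoke completeness precisely at the step you are missing.

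The paper supplies that ingredient through Gale duality. When $\Nef(Z)$ is full-dimensional, pick $w\in\Relint(\Nef(Z))$; then $w\in\Relint\langle Q^I\rangle$ for every $I\in\I_\Sigma$, so the bunch $\B$ of $\Sigma$ is contained in $\B_{\gamma_w}$, equivalently $\Sigma\subseteq\Sigma_{\gamma_w}$. Because $\Sigma$ is a \emph{complete} simplicial fan it admits no proper enlargement to a quasi-fan on the same rays, so $\Sigma_{\gamma_w}=\Sigma$ and hence $\gamma_w=\bigcap\B_{\gamma_w}=\bigcap\B=\Nef(Z)$: this is where the uniqueness you need actually comes from. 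When $\Nef(Z)$ is not full-dimensional, the paper passes to a simplicial refinement $\Sigma'$ of $\Sigma_{\gamma_w}$, obtains a g-chamber $\gamma'$ from the first case together with Proposition~\ref{prop:quasiproiettivo}, shows that every $\gamma_w$ with $w\in\Relint(\Nef(Z))$ is a face of $\gamma'$, and concludes by convexity and the fan structure of $\Ga(Q)$. You should replace your last paragraph with an argument of this kind.
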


\begin{proof}
Let $Z=Z(\Si)$ be  a $\Q$-factorial and complete toric variety and let $Q$ be a  weight matrix of $Z$. This means that $\Si\in\SF(V)$, where $V$ is a fan matrix of $Z$. By Remark\,\ref{rem:corrispondenze}\,(b)
$$\Nef(Z)=\bigcap_{I\in\I_\Si}\langle Q^I\rangle=\bigcap\{\langle Q^I\rangle\,|\,\langle Q^I\rangle\in\B\}\subseteq\Mov(Q)$$
where $\B$ is the bunch of cones associated with $\Si$.
Then $\Nef(Z)$ is a finite union of GKZ-cones in $\Ga(Q)|_{\Mov}$. Let us first of all assume that $\Nef(Z)$ is full dimensional inside $\Mov(Q)$. Choose $w\in\Relint(\Nef(Z))$ and consider the associated GKZ-cone $\g_{w}$. Then $\Relint(\g_w)\subseteq\Relint(\langle Q^I\rangle)$, for every $I\in\I_\Si$, so giving that $\B\subseteq\B_{\g_w}$. Then $\Si\subseteq\Si_{\g_w}$, meaning that $\Si_{\g_w}=\Si$, as $\Si\in\SF(V)$. Then $\g_w=\Nef(Z)$, for every $w\in\Relint(\Nef(Z))$, so giving that $\Nef(Z)$ is a g-cell.

Assume now that $\Nef(Z)$ is not full dimensional in $\Mov(Q)$ and choose $w\in\Relint(\Nef(Z))$. Then Proposition\,\ref{prop:quasiproiettivo}~(1) gives that the fan $|\Si_{\g_w}|=\langle V\rangle$. Let $\Si'$ be a  simplicial refinement of $\Si_{\g_w}$. Then $\Si'\in\SF(V)$ and Proposition \ref{prop:quasiproiettivo} implies that $\g':=\bigcap_{I\in\I_{\Si'}}\langle Q^I\rangle$ is full dimensional in $\Mov(Q)$. By the previous part of the proof, $\g'$ turns out to be a g-chamber. Since $\Si'$ is a refinement  of $\Si_{\g_w}$, one has that $\g_w$ is a face of $\g'$, for every $w\in\Relint(\Nef(Z))$, by \cite[Thm.~2.2.2.2]{ADHL}. Then $\Nef(Z)$ is a face of $\g'$, as it is a convex cone. Finally, the fan structure of the GKZ-decomposition allows us to concluding that $\Nef(Z)$ a is actually a GKZ-cone, hence a g-cell.
\end{proof}

\subsection{Sharp completions of the canonical ambient toric variety}\label{ssez:sharp}

As already observed in  the introduction,  by Nagata's theorem  e\-ve\-ry algebraic variety can be embedded in a complete one and, for  those endowed with an algebraic group action Sumihiro provided an equivariant version of this theorem. In particular, for toric varieties, it corresponds with the Ewald-Ishida  combinatorial completion procedure for fans, recently simplified by Rohrer. Anyway, all these procedures in general require the adjunction of some new ray into the fan under completion, that is an increasing of the Picard number.

In the following we  call \emph{sharp} a completion which does not increase the Picard number. Although  a sharp completion of a toric variety does not exist in general,  Hu and Keel showed that the canonical ambient toric variety $W$ of a MDS $X$ admits sharp completions, which are even projective, one for each g-chamber contained in $\Nef(W)\cong\Nef(X)$ \cite[Prop.~2.11]{Hu-Keel}.

Unfortunately, this Hu-Keel result does no more hold for the ambient toric variety of a wMDS. The key condition one needs to impose, in order to guarantee the existence of a sharp completion in the non-projective set up, is the existence of particular cells inside the $\Nef$ cone, in the following called  \emph{filling cells}.

 Let us  then set some notation.

\begin{definition}[Filling cell]\label{def:filling} Let $X$ be a wMDS endowed with a Cox basis $\X$ and $i:X\hookrightarrow W$ its $\X$-canonical toric embedding. A geometric cell $\g\subseteq\Nef(X)\cong\Nef(W)$ is called \emph{filling} if $\g=\bigcap_{I\in\I_{\Si'}}\langle Q^I\rangle$ for some fan $\Si'\in\SF(V)$ containing the fan $\Si$ of $W$, that is $\Si\subseteq\Si'$. The associated fan $\Si'$ is called a \emph{filling fan}. Moreover, whenever $W$ is smooth, a  filling cell is called \emph{smooth} if it is associated to a regular filling fan $\Si'$.
\end{definition}

\begin{definition}[Fillable wMDS]\label{def:fillable} A wMDS $X$ is called \emph{fillable} if $\Nef(X)$ contains a filling cell $\g$. Moreover, it is called \emph{smoothly fillable} if $\g\subseteq\Nef(X)$ is a smooth filling cell.
\end{definition}
In general a g-cell may correspond to more than one fan and may not be a filling cell as the following example shows.

\begin{example}\label{ex:cells}
Consider the Berchtold-Hausen example \cite[Ex.~10.2]{Berchtold-Hausen}. Here we will refer to the analysis performed in \cite[Rem.~3.1]{RT-r2proj}. Let $X=W$ be the MDS given by the $\Q$-factorial complete toric variety whose fan matrix $V$ and weight matrix $Q$ are given by
\begin{figure}
\begin{center}
\includegraphics[width=8truecm]{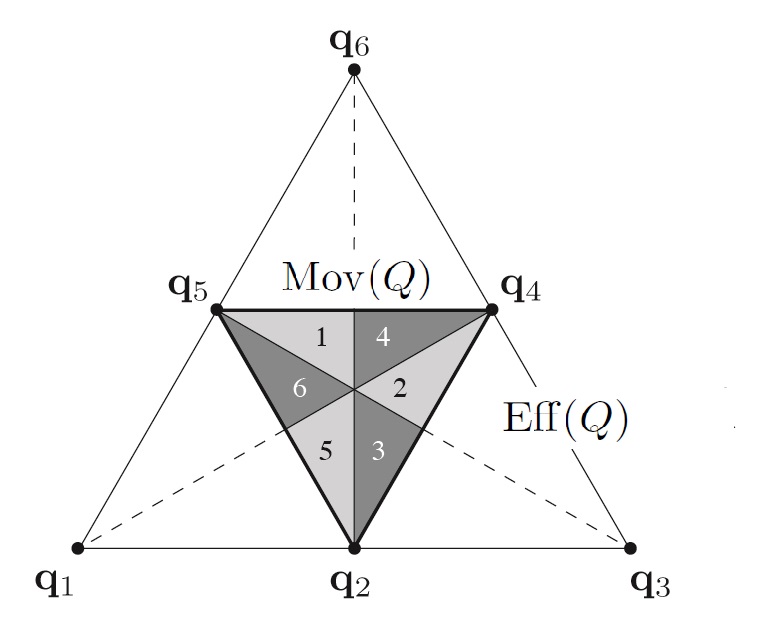}
\caption{\label{Fig1}The section of the cone $\Eff(Q)$ in Example~\ref{ex:cells}, which is the positive orthant of $\R^3$, with the plane $x_1+x_2+x_3=1$.}
\end{center}
\end{figure}
\begin{equation*}
V:= \left(
  \begin{array}{cccccc}
    1&0&0&0&-1&1 \\
    0&1&0&-1&-1&2 \\
    0&0&1&-1&0&1 \\
  \end{array}
\right)
\ \Rightarrow\
Q=\left(
     \begin{array}{cccccc}
       1&1&0&0&1&0\\
      0&1&1&1&0&0\\
       0&0&0&1&1&1 \\
     \end{array}
   \right)
\end{equation*}
and whose fan $\Si_1:=\Si_{\g_1}$ is the fan determined by  the g-chamber 1 in Fig.~\ref{Fig1}. The intersection of all the six chambers contained in $\Mov(Q)$ is given by the cone ge\-ne\-ra\-ted by the anti-canonical class $\g=\left\langle\begin{array}{c}
1\\
1\\
1
\end{array} \right\rangle$ (see Fig.~\ref{Fig1}). It is clearly a GKZ-cone contained in the boundary $\partial \g_1$. In particular $\g$ is the g-cell corresponding to both the two different fans $\Si,\Si'\in\SF(V)$ giving rise to complete and non-projective $\Q$-factorial toric varieties, as  described in \cite[Rem.~3.1]{RT-r2proj}. Then $\g$ is a g-cell inside $\g_1=\Nef(W)$ but it cannot be a filling cell since both $\Si,\Si'$ do not contain  $\Si_1$, and any further g-cell described by the remaining fans in $\SF(V)$ are given by the chambers  determined by numbers from 2  to 6  in Fig.~\ref{Fig1}.
\end{example}

Anyway what described in the previous Example~\ref{ex:cells} cannot occur for full dimensional cells, as proved by the following

\begin{proposition}\label{prop:camera-filling}
A geometric chamber $\g\subseteq\Nef(X)$ is always a filling cell. Moreover the filling fan determined by $ \g$ is unique and given by $ \Si_\g$, as defined in (\ref{Si_g}).
\end{proposition}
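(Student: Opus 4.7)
The plan is to exploit the characterization of geometric chambers given by Proposition~\ref{prop:cell=camera} together with the description of $\Nef(W)$ via the bunch of cones recalled in Remark~\ref{rem:corrispondenze}(b). Let $\g\subseteq\Nef(X)\cong\Nef(W)$ be a geometric chamber. Since $\g$ is in particular a geometric cell, there exists $\Si'\in\SF(V)$ with $\g=\g_{\Si'}=\bigcap_{I\in\I_{\Si'}}\langle Q^I\rangle$, and Proposition~\ref{prop:cell=camera} applied to the chamber $\g=\g_{\Si'}$ gives $\Si_\g=\Si'$. So the natural candidate for the filling fan is precisely $\Si_\g$, and the task reduces to verifying the inclusion $\Si\subseteq\Si_\g$.

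By Gale duality (see (\ref{Si_g})), the inclusion $\Si\subseteq\Si_\g$ is equivalent to the containment $\B\subseteq\B_\g$ of the associated bunches, i.e., to $\langle Q^I\rangle\in\B_\g$ for every $I\in\I_\Si$, which in turn amounts to $\Relint(\g)\subseteq\Relint\langle Q^I\rangle$ for every $I\in\I_\Si$. This is where the hypothesis $\g\subseteq\Nef(W)$ enters. By Remark~\ref{rem:corrispondenze}(b) one has $\Nef(W)=\bigcap_{I\in\I_\Si}\langle Q^I\rangle$, so $\g\subseteq\langle Q^I\rangle$ for every $I\in\I_\Si$. Since $\g$ is full-dimensional in $N^1(W)_\R$, any cone $\langle Q^I\rangle$ containing $\g$ is itself full-dimensional, so its relative interior coincides with its topological interior; and any topological interior point of $\g$ is automatically an interior point of $\langle Q^I\rangle$. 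Hence $\Relint(\g)\subseteq\Relint\langle Q^I\rangle$, as required. This gives $\Si\subseteq\Si_\g$, so $\Si_\g$ is a filling fan for $\g$ and $\g$ is a filling cell.

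For uniqueness, suppose $\Si''\in\SF(V)$ is any filling fan determined by $\g$, so that $\Si\subseteq\Si''$ and $\g=\bigcap_{I\in\I_{\Si''}}\langle Q^I\rangle=\g_{\Si''}$. Applying Proposition~\ref{prop:cell=camera} once more to the chamber $\g=\g_{\Si''}$ yields $\Si_{\g_{\Si''}}=\Si''$, i.e.\ $\Si''=\Si_\g$. Therefore the filling fan is unique and equals $\Si_\g$, as claimed.

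The only subtle point -- and the one I would double-check most carefully -- is the passage from $\g\subseteq\langle Q^I\rangle$ to $\Relint(\g)\subseteq\Relint\langle Q^I\rangle$; the argument relies crucially on the full-dimensionality of the chamber $\g$ in $N^1(W)_\R$, since without it one could only conclude set-theoretic containment of relative interiors in the affine hull of $\g$, not in that of $\langle Q^I\rangle$. Everything else is formal and follows immediately from Proposition~\ref{prop:cell=camera} and the Gale-dual dictionary between fans in $\SF(V)$ and bunches of cones in $\Mov(Q)$.
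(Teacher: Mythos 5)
Your proof is correct and follows essentially the same route as the paper's: existence of the filling fan is obtained by showing $\Relint(\g)\subseteq\Relint\langle Q^I\rangle$ for every $I\in\I_\Si$ (you spell out, via full-dimensionality of the chamber, exactly the containment the paper leaves implicit), hence $\B\subseteq\B_\g$ and $\Si\subseteq\Si_\g$. For uniqueness you invoke Proposition~\ref{prop:cell=camera} directly rather than re-running the paper's maximality-of-bunches argument, but this is the same underlying fact, so the two proofs coincide in substance.
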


\begin{proof} Since $\g$ is a g-chamber, $\Relint(\g)\subseteq \Relint(\g_\Si)$, where $\g_\Si=\Nef(Z)$. Then $\B\subseteq \B_\g$, that is $\Si\subseteq\Si_\g$.

For what is concerning the uniqueness of the filling fan associated with $\g$, assume that $\Si'$ is a further filling fan associated with $\g$, that is $\g=\bigcap_{I\in\I_{\Si'}}\langle Q^I\rangle$. Let $\B'$ be the associated bunch of cones. For every $w\in\Relint(\g)$  one has
$$\forall\,I\in\I_{\Si_\g}\,,\,J\in\I_{\Si'}\quad w\in\langle Q^I\rangle\cap\langle Q^J\rangle\ \Longrightarrow\ \Relint\langle Q^I\rangle\cap\Relint\langle Q^J\rangle\neq\emptyset$$
so giving that $\B'=\B_\g$, as they are maximal bunches. Then $\Si'=\Si_\g$.
\end{proof}

\begin{remark}
Putting together what observed until now and assuming $\langle V\rangle=\R^n$, we get the following picture
\begin{equation*}
  \xymatrix{\SF(V)\ar@{->>}[r]^-{\nu}&\{\text{g-cells}\}\ar@{^(->}[r]^-{\eta}&\Ga(Q)|_{\Mov}\\
            \P\SF(V)\ar@{^(->}[u]\ar[r]_-{1:1}^-{\nu_\P}&\{\text{g-chambers}\}\ar@{^(->}[u]
            \ar[r]_-{1:1}^-{\eta_\P}&
            \Ga(Q)|_{\Mov}(r)\ar@{^(->}[u]}
\end{equation*}
where
\begin{itemize}
  \item the map $\nu$ is defined by setting $\nu(\Si):=\Nef(Z(\Si))$, as described in Proposition~\ref{prop:nefècella}: Example~\ref{ex:cells} shows that $\nu$ is not injective. But it is surjective by Definition~\ref{def:cell} of a g-cell;
  \item $\P\SF(V)=\{\Si\in\SF(V)\,|\,Z(\Si)\ \text{is (quasi-)projective}\}$;
  \item the map $\nu_\P$ is the restriction of $\nu$ to the subset $\P\SF(V)\subseteq\SF(V)$: it gives a bijection on the subset of g-chambers by Propositions~\ref{prop:quasiproiettivo} and \ref{prop:camera-filling};
  \item the map $\eta$ is given by the definition \ref{def:cell} of a g-cell; its restriction to the subset of chambers gives rise to a bijection $\eta_\P$ on the $r$-skeleton of the GKZ-decomposition by Proposition~\ref{prop:camera}, assuming $r:=\rk(Q)$.
\end{itemize}
\end{remark}

The following is an attempt of extending, to the non-projective setup, under suitable conditions, the Hu-Keel sharp completion (see items (1), (2) and (3) in \cite[Prop.~2.11]{Hu-Keel}). A comparison with Construction~3.2.5.6 in \cite{ADHL}, may be useful.

\begin{theorem}\label{thm:complete-emb} Let $X$ be a fillable wMDS with Cox basis $\X$, $i:X\hookrightarrow W$ its $\X$-canonical toric embedding and $V$ a fan matrix of $W$. Then the following facts hold.
\begin{enumerate}
  \item For any filling cell $\g\subseteq\Nef(W)$, there esists a $\Q$-factorial partial completion $Z$ of $W$, with an associated open embedding $\iota:W\hookrightarrow Z$, such that:
       \begin{itemize}
         \item the composed embedding $\iota\circ i: X\hookrightarrow Z$ induces a neat embedding of the closure of $X$ inside $Z$, that is $j:X':=\overline{\iota\circ i(X)}\hookrightarrow Z$; in particular $X'$ is still a wMDS giving a partial completion of $X$;
         \item $Z$ is complete if and only if $\langle V\rangle=\R^n$, that is if and only if $V$ is an $F$-matrix: in this case $Z$ is a sharp completion of $W$ and $X'$ is a complete wMDS giving a sharp completion of $X$;
         \item if $X$ is complete then $X'\cong X$ and $\iota\circ i:X\hookrightarrow Z$ is a neat embedding;
         \item there is a commutative dia\-gram between total coordinate spaces, characteristic spaces and toric embeddings, given by
      \begin{equation*}
        \xymatrix{\overline{X}\ar@{^{(}->}[r]^-{\overline{i}}&\overline{W}\ar[r]^-{id}&\overline{Z}\\
                    \widehat{X}\ar@{->>}[d]^-{p_X}\ar@{^{(}->}[u]^-{j_X}\ar@{^{(}->}[r]^-{\widehat{i}}&
                    \widehat{W}\ar@{->>}[d]^-{p_W}\ar@{^{(}->}[u]^-{j_W}
                    \ar@{^{(}->}[r]^-{\widehat{\iota}}&\widehat{Z}\ar@{->>}[d]^-{p_Z}\ar@{^{(}->}[u]^-{j_Z}\\
                    X\ar[rrd]^{\iota\circ i}\ar@{^{(}->}[r]^-{i}&W\ar@{^{(}->}[r]^-{\iota}&Z\\
                    &&X'\ar@{^{(}->}[u]^-j}
      \end{equation*}
      In particular $p_Z:\widehat{Z}\twoheadrightarrow Z$ is a 1-free geometric quotient coming from the action of the characteristic quasi-torus $G$ on $\overline{Z}$ and $j_Z(\widehat{Z})=\overline{Z}^{s}=\overline{Z}^{ss}$.
       \end{itemize}
  \item There is an induced diagram of group isomorphisms
      \begin{equation*}
        \xymatrix{\Cl(Z)\ar[r]^-{\iota^*}_-{\cong}&\Cl(W)\ar[r]^-{i^*}_-{\cong}&\Cl(X)\\
                    \Pic(Z)\ar@{^{(}->}[u]\ar[r]^-{\iota^*}_-{\cong}&\Pic(W)\ar@{^{(}->}[u]
                    \ar[r]^-{i^*}_-{\cong}&\Pic(X)\ar@{^{(}->}[u]}
      \end{equation*}
  \item Isomorphisms $\iota^*$ and $i^*$ extend to give $\R$-linear isomorphisms
      \begin{equation*}
        \xymatrix{N^1(Z)\ar[r]^-{\iota^*_\R}_-{\cong}&N^1(W)\ar[r]^-{i^*_\R}_-{\cong}&N^1(X)\\
                    \overline{\Eff}(Z)\ar@{^{(}->}[u]\ar[r]^-{\iota^*_\R}_-{\cong}&\overline{\Eff}(W)\ar@{^{(}->}[u]
                    \ar[r]^-{i^*_\R}_-{\cong}&\overline{\Eff}(X)\ar@{^{(}->}[u]\\
                    \overline{\Mov}(Z)\ar@{^{(}->}[u]\ar[r]^-{\iota^*_\R}_-{\cong}&\overline{\Mov}(W)\ar@{^{(}->}[u]
                    \ar[r]^-{i^*_\R}_-{\cong}&\overline{\Mov}(X)\ar@{^{(}->}[u]\\
                    \g=\Nef(Z)\ar@{^{(}->}[u]\ar@{^{(}->}[r]^-{\iota^*_\R}&\Nef(W)\ar@{^{(}->}[u]
                    \ar[r]^-{i^*_\R}_-{\cong}&\Nef(X)\ar@{^{(}->}[u]}
      \end{equation*}
\end{enumerate}
\end{theorem}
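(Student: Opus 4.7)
Given the filling cell $\g\subseteq\Nef(W)$, the first step is to unpack Definition~\ref{def:filling} to extract a fan $\Si'\in\SF(V)$ with $\Si\subseteq\Si'$ and $\g=\bigcap_{I\in\I_{\Si'}}\langle Q^I\rangle$, and set $Z:=X(\Si')$. Because $\Si'\in\SF(V)$, the toric variety $Z$ is $\Q$-factorial with the same fan matrix $V$ as $W$; moreover $|\Si'|=\langle V\rangle$, so $Z$ will be complete precisely when $\langle V\rangle=\R^n$, i.e.\ when $V$ is an $F$-matrix. The inclusion of fans $\Si\subseteq\Si'$ yields a canonical open toric embedding $\iota:W\hookrightarrow Z$, and by Remark~\ref{rem:corrispondenze}(b) one immediately identifies $\Nef(Z)=\bigcap_{I\in\I_{\Si'}}\langle Q^I\rangle=\g$.

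Next, I will assemble the commutative diagram in (1). Since $\Si$ and $\Si'$ share their $1$-skeleton, they have identical fan and weight matrices, so $\overline{Z}=\Spec\K[\X]=\overline{W}$. Formula (\ref{Irr}) together with $\I_\Si\subseteq\I_{\Si'}$ gives $\irr(W)\subseteq\irr(Z)$, hence an open immersion $\widehat{W}\hookrightarrow\widehat{Z}$ inside the common total coordinate space. The characteristic quasi-torus $G=\Hom(\Cl(X),\K^*)$ acts on both characteristic spaces through the same weights $Q$, and Cox's quotient construction produces a $1$-free geometric quotient $p_Z:\widehat{Z}\twoheadrightarrow Z$ with $j_Z(\widehat{Z})=\overline{Z}^s=\overline{Z}^{ss}$, compatible with $p_W$ and $p_X$; this completes the diagram. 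Now set $X':=\overline{\iota\circ i(X)}\subseteq Z$. Since $\Si$ and $\Si'$ share all rays, no new torus-invariant divisor appears in $Z$, so $Z\setminus W$ has codimension $\geq 2$; consequently the boundary $X'\setminus\iota(i(X))$ has codimension $\geq 1$ in $X'$, and if $X$ is already complete then $\iota\circ i(X)$ is closed in $Z$, so $X'=X$.

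For neatness of $j:X'\hookrightarrow Z$, I will invoke Proposition~\ref{prop:neat}: the torus-invariant primes of $W$ and $Z$ coincide as sets, and their preimages under $j$ are the Zariski closures in $X'$ of the pairwise distinct irreducibles $i^{-1}(D_\rho)\subseteq X$. The class-group isomorphism $\iota^*:\Cl(Z)\stackrel{\cong}{\to}\Cl(W)$ follows from the shared exact sequence (\ref{Tdivisori}), and composing with the $i^*$ of Proposition~\ref{prop:neat} yields part (2) on class groups. For the Picard level, codimension $\geq 2$ of $Z\setminus W$ together with $\Q$-factoriality of $Z$ gives the restriction injection $\Pic(Z)\hookrightarrow\Pic(W)$; the reverse inclusion requires showing that the Cartier data supporting a class on $W$ extend compatibly to the new simplicial cones of $\Si'\setminus\Si$. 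Part (3) then follows by tensoring with $\R$: the cones $\overline{\Eff}=\langle Q\rangle$ and $\overline{\Mov}=\bigcap_i\langle Q^{\{i\}}\rangle$ depend only on the shared weight matrix $Q$ (Remark~\ref{rem:corrispondenze}), so $\iota^*_\R$ is an isomorphism on each, while $\Nef(Z)=\g\subseteq\bigcap_{I\in\I_\Si}\langle Q^I\rangle=\Nef(W)$ because $\I_\Si\subseteq\I_{\Si'}$.

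The most delicate step will be the Picard-group comparison: codimension-two extension is enough to move Weil classes across, but turning a Cartier structure on $W$ into one on $Z$ requires extending the compatible characters $m_\sigma$ to every new simplicial cone of $\Si'\setminus\Si$, and this is precisely where the filling property of $\g$ (placing $\Nef(Z)$ inside $\Pic(Z)_\R$ with the right rank) must be exploited. A secondary subtlety is to confirm that $X'$ inherits a genuine wMDS structure through closure, so that the pullback $j^*$ faithfully transports $\overline{\Eff}$, $\overline{\Mov}$ and $\Nef$ along the final column of the big diagram.
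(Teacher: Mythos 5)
Your construction coincides with the paper's: extract the filling fan $\Si'\supseteq\Si$ from Definition~\ref{def:filling}, set $Z=Z(\Si')$, observe $\overline{Z}=\overline{W}$, translate the inclusion of fans $\Si\subseteq\Si'$ into $\I rr(W)\subseteq\I rr(Z)$, hence $B_Z\subseteq B_W$ and an equivariant open immersion $\widehat{W}\hookrightarrow\widehat{Z}$, then descend through the Cox quotient to obtain $\iota:W\hookrightarrow Z$. The identification $\Nef(Z)=\g$, the completeness criterion $|\Si'|=\langle V\rangle$, and the transport of $\overline{\Eff}$ and $\overline{\Mov}$ along the shared weight matrix $Q$ all match the paper's argument, as does the treatment of $X'=\overline{\iota\circ i(X)}$.

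The one place where you genuinely stop short is the surjectivity of $\iota^*:\Pic(Z)\to\Pic(W)$. You are right that injectivity comes for free from $\Cl(Z)\cong\Cl(W)$ (no new rays, so $Z\setminus\iota(W)$ has codimension at least $2$), and right that the converse amounts to extending the Cartier data $\{m_\sigma\}$ of a divisor on $W$ to the maximal cones of $\Si'\setminus\Si$. But you then defer this to ``the filling property of $\g$,'' which is not where the resolution lives: the filling property is exhausted once it hands you a fan $\Si'$ containing $\Si$, and it says nothing about integral Cartier data on the new cones; $\Q$-factoriality only gives $\Pic\otimes\Q\cong\Cl\otimes\Q$ on both sides, i.e.\ finite index rather than equality. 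The paper disposes of this point with a one-line observation that local equations of Cartier divisors on $Z$ restrict to local equations on $W$ because $\iota$ is an open embedding; if you instead want to run your explicit cone-by-cone argument, you must actually produce the characters $m_\sigma$ for the cones of $\Si'_{\max}$ not in $\Si$, and your plan as written does not do so. A smaller loose end of the same kind: you invoke Proposition~\ref{prop:neat} for the neatness of $j:X'\hookrightarrow Z$, which presupposes that $\Cox(X')\cong\Cox(X)$ with the same Cox basis and the matching irrelevant ideal; the paper secures this by noting that the ideal of relations $I$ is unchanged under closure, whereas you flag it as a ``secondary subtlety'' and leave it open.
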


\begin{proof}\hfill\newline
(1):  Let $Q=\G(V)$ be a weight matrix of $W$, $\Si$ be the fan of $W$ and $\B$ be the associated bunch of cones. Since $X$ is fillable, there exists a filling cell $\g\subseteq\Nef(W)\cong\Nef(X)$, that is, there exists $\Si'\in\SF(V)$ such that $\g=\bigcap_{I\in\I_{\Si'}}\langle Q^I\rangle$ and $\Si\subseteq\Si'$.  Let $Z=Z(\g,\Si')$ be the $\Q$-factorial and toric variety determined by the fan $\Si'$. Clearly $Z$ is complete if and only if the support $\langle V\rangle=|\Si'|$ coincides with $N_\R\cong\R^n$. By construction the total coordinate space $\overline{Z}$ of $Z$ is given by
\begin{equation*}
  \overline{Z}=\Spec(\Cox(Z))=\K^m=\Spec(\Cox(W))=\overline{W}\,.
\end{equation*}
Moreover, calling $\B'$ the bunch associated by Gale duality with $\Si'$, the inclusion $\Si\subseteq\Si'$ of simplicial fans, translates into the inclusion $\B\subseteq\B'$ of bunches. Then
\begin{eqnarray*}
  \I rr(Z)=&\left(\prod_{i\not\in I}x_i\,|\,I\in\I_{\Si'}\right)=\left(\prod_{j\in J}x_j\,|\,\langle Q_J\rangle\in\B'\right)& \\
  \supseteq&\left(\prod_{j\in J}x_j\,|\,\langle Q_J\rangle\in\B\right)=\left(\prod_{i\not\in I}x_i\,|\,I\in\I_\Si\right)&=\I rr(W)=\widetilde{\I rr}\\
  &\Rightarrow B_Z\subseteq B_W=\widetilde{B}&
\end{eqnarray*}
where the latter is an inclusion of Zariski closed subset of $\overline{Z}=\overline{W}$. Setting $\widehat{Z}:=\overline{Z}\setminus B_Z$ we get naturally the open embeddings
\begin{eqnarray*}
  j_Z&:&\xymatrix{\widehat{Z}\hookrightarrow \overline{Z}}\\
  \widehat{\iota}&:&\xymatrix{\widehat{W}:=\overline{W}\setminus B_W\ar@{^{(}->}[r] & \overline{Z}\setminus B_Z=:\widehat{Z}}
\end{eqnarray*}
which are clearly equivariant \wrt the $G$-action on $\overline{W}=\overline{Z}$. Such an action gives rise to the characteristic space $p_Z:\widehat{Z}\twoheadrightarrow Z$ and, so, to a 1-free geometric quotient by Cox Theorem \cite[Thm.~2.1]{Cox}, as $Z$ is $\Q$-factorial and without torus factors, being a sharp partial completion of $W$. In particular, $j_Z(\widehat{Z})=\overline{Z}^s=\overline{Z}^{ss}$.  Passing to the quotient by the equivariant action of $G$, the open embedding $\widehat{\iota}$ gives rise to an open embedding $\iota:W\hookrightarrow Z$. By construction, the closure $X':=\overline{\iota\circ i(X)}$, inside $Z$, is a sharp partial completion of $X$ whose natural embedding $j:X'\hookrightarrow Z$ is a neat closed embedding. Sharpness conditions ensure that $\rk(\Cl(X'))=\rk(\Cl(X)$, hence finite. Finally, by construction $\Cox(X')\cong\Cox(X)\cong\Cox(W)/I$, being the relations defining $I$ unchanged. Then $X'$ is a wMDS. If $Z$ is complete then also $X'$ is complete.

(2): The given diagram follows immediately by construction explained in the previous part (1) and Proposition~\ref{prop:neat}. The only point deserving some word of explanation is the surjectivity of $\iota^*:\Pic(Z)\to\Pic(W)$, which is immediately obtained by observing that every local equation of a Cartier divisor $D$ on $Z$ is actually a local equation of the Cartier divisor $\iota^*D$ on $W$, since $\iota$ is an open embedding.

(3): The given diagram follows immediately by construction explained in the previous part (1) and Proposition~\ref{prop:Qfactorial}. In particular notice that $Z$ has been chosen by asking that $\g=\Nef(Z)$ is a g-cell inside $\Nef(W)$.
\end{proof}

\begin{remark}
  Recalling \S~\ref{ssez:Irr&bunches} and in particular relation (\ref{K-shrink}), up to shrink $K$, pullback morphisms $i^\#,i^*,\iota^\#,\iota^*$ allows us to extend diagram (\ref{div-diagram-embedding}) as follows
  \begin{equation}\label{div-diagram-embedding-2}
      \xymatrix{0 \ar[r]&  M \ar[r]^-{div_Z}_-{V^T}\ar[d]_-{\mathbf{I}_{n}} &
\Div_\T(Z) \ar[r]^-{d_Z}_-{Q\oplus\Ga}\ar[d]^-{\iota^\#}_-{\mathbf{I}_{m}} & \Cl(Z) \ar[d]^-{\iota^*}_-{\mathbf{I}_{r}\oplus\overline{\mathbf{I}}_s} \ar[r]&0\\
      0 \ar[r]&  M \ar[r]^-{div_W}_-{V^T}\ar[d]_-{\mathbf{I}_{n}} &
\Div_\T(W) \ar[r]^-{d_W}_-{Q\oplus\Ga}\ar[d]^-{i^\#}_-{\mathbf{I}_{m}} & \Cl(W) \ar[d]^-{i^*}_-{\mathbf{I}_{r}\oplus\overline{\mathbf{I}}_s} \ar[r]&0\\
0 \ar[r]& K_0 \ar[r]_-{V^T}&K\ar[r]^-{d_{K}}_-{Q\oplus\Ga} & \Cl (X) \ar[r]& 0 \\
 }
\end{equation}
where representative matrices are chosen \wrt a fixed basis of the character group $M=\Hom(\T,\Z)$ and the standard basis of torus invariant divisors.
\end{remark}

\begin{proposition}\label{prop:cellinNef}
  Let $X$ be a complete wMDS with Cox basis $\X$ and $i:X\hookrightarrow W$ its $\X$-canonical toric embedding. Then $X$ is a MDS if and only if $\Nef(X)\cong\Nef(W)$ is a finite union of chambers.
\end{proposition}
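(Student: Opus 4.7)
The strategy is to identify $X$ being a MDS (i.e., projective) with the full-dimensionality of $\Nef(X)$, and then use the fan structure of the GKZ-decomposition from \S~\ref{ssez:GKZ} to translate full-dimensionality into a decomposition by chambers. The isomorphism $i^*_\R : N^1(W)\stackrel{\cong}{\to} N^1(X)$ of Proposition~\ref{prop:Qfactorial}, sending $\Nef(W)$ onto $\Nef(X)$, will let us work entirely on the toric side.

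For $(\Leftarrow)$, assume $\Nef(X)$ is a finite union of g-chambers and pick one, $\g\subseteq \Nef(X)\cong \Nef(W)$. By Proposition~\ref{prop:camera-filling}, $\g$ is automatically a filling cell, with unique filling fan $\Si_\g\supseteq \Si$. Since $X$ is complete, Corollary~\ref{cor:Qpositive} ensures that the fan matrix $V$ of $W$ is an $F$-matrix, so $\langle V\rangle=\R^n$ and $\Si_\g$ is complete. Applying Theorem~\ref{thm:complete-emb}\,(1) to $\g$ produces a sharp complete toric variety $Z=Z(\Si_\g)$ together with a neat closed embedding $\iota\circ i:X\hookrightarrow Z$ (using $X'\cong X$ since $X$ is complete), and by Theorem~\ref{thm:complete-emb}\,(3) one has $\Nef(Z)=\g$, which is full-dimensional in $N^1(Z)$. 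Hence $Z$ is a projective complete toric variety, and $X$, being a closed subvariety of $Z$, is projective, i.e., a MDS.

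For $(\Rightarrow)$, assume $X$ is a MDS. Then $X$ admits an ample Cartier divisor, whose class lies in the interior of $\Nef(X)$; in particular $\Nef(X)$, and therefore $\Nef(W)$ via $i^*_\R$, is full-dimensional. By Remark~\ref{rem:corrispondenze}\,(b), $\Nef(W)=\bigcap_{I\in\I_\Si}\langle Q^I\rangle$. The key observation is that this intersection is a union of GKZ-cones: for any $w\in \Nef(W)$ one has $w\in \langle Q^I\rangle$ for every $I\in\I_\Si$, and Definition~\ref{def:GKZ} then forces $\g_w\subseteq \langle Q^I\rangle$ for each such $I$, whence $\g_w\subseteq \Nef(W)$. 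So $\Nef(W)=\bigcup_{w\in \Nef(W)}\g_w$ is a union of cones of the finite fan $\Ga(Q)|_{\Mov}$. Full-dimensionality then forces $\Nef(W)$ to equal the union of the finitely many top-dimensional GKZ-cones in $\Mov(Q)$ it contains (every interior point $w$ has $\g_w$ full-dimensional inside $\Mov(Q)$, and $\Nef(W)$ is the closure of its interior). By Proposition~\ref{prop:camera} these top-dimensional cones are precisely g-chambers. Transporting by $i^*_\R$ transfers this finite decomposition to $\Nef(X)$.

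The only nontrivial step is the fan-theoretic containment $\g_w\subseteq \Nef(W)$ for $w\in\Nef(W)$, which bridges the bunch-theoretic description of $\Nef(W)$ as an intersection of cones $\langle Q^I\rangle$ and its description as a union of GKZ-cones; this is the main technical point and it follows directly from Definition~\ref{def:GKZ}. Once this is established, both directions reduce to routine applications of Theorem~\ref{thm:complete-emb}, Proposition~\ref{prop:camera-filling} and Proposition~\ref{prop:camera}, together with the standard fact that a closed subvariety of a projective variety is projective.
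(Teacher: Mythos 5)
Your proof is correct. The $(\Leftarrow)$ direction follows essentially the paper's route: pick a g-chamber in $\Nef(X)\cong\Nef(W)$, note it is automatically a filling cell by Proposition~\ref{prop:camera-filling}, and apply Theorem~\ref{thm:complete-emb} to get a sharp complete --- hence, by Propositions~\ref{prop:quasiproiettivo} and~\ref{prop:cell=camera}, projective --- toric completion $Z$ containing $X$ as a closed subvariety; your extra remark that completeness of $Z$ requires $V$ to be an $F$-matrix (Corollary~\ref{cor:Qpositive}) is a detail the paper leaves implicit. Where you genuinely diverge is the $(\Rightarrow)$ direction: the paper simply cites \cite[Prop.~2.11\,(3)]{Hu-Keel}, whereas you give a self-contained argument inside the GKZ formalism of \S~\ref{ssez:GKZ}. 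Your key step --- that for $w\in\Nef(W)=\bigcap_{I\in\I_\Si}\langle Q^I\rangle$ each $\langle Q^I\rangle$ with $I\in\I_\Si$ occurs among the cones intersected in Definition~\ref{def:GKZ}, so $\g_w\subseteq\Nef(W)$ and $\Nef(W)$ is a finite union of GKZ-cones --- is the same mechanism the paper uses elsewhere to show that the $\Nef$ cone of a $\Q$-factorial complete toric variety is a geometric cell, so it is fully consistent with the paper's toolkit; combined with the full-dimensionality of the nef cone of a projective variety and Proposition~\ref{prop:camera}, it yields the chamber decomposition without importing the Hu--Keel statement. One phrase is slightly loose: not \emph{every} interior point $w$ of $\Nef(W)$ has $\g_w$ full-dimensional, since an interior point may lie on a wall separating two chambers both contained in $\Nef(W)$; the argument nevertheless goes through because the points lying on lower-dimensional GKZ-cones form a nowhere dense subset, so the finitely many full-dimensional GKZ-cones contained in $\Nef(W)$ cover a dense subset of it and, being closed, cover all of it. In short, your version buys independence from \cite{Hu-Keel} for this implication at the cost of a paragraph of fan combinatorics, while the paper's buys brevity by citation.
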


\begin{proof}
   Assume that $X$ is a MDS. Then \cite[Prop.~2.11\,(3)]{Hu-Keel} implies that $\Nef(X)$ is a finite union of chambers, each of them associated with a projective completion of the canonical ambient toric variety $W$.
  Conversely if $\g\subseteq\Nef(X)\cong\Nef(W)$ is a g-chamber, then $X$ is a fillable wMDS and the previous Theorem~\ref{thm:complete-emb} exhibits a completion $Z(\g,\Si')$ of $W$ which is projective by Propositions~\ref{prop:quasiproiettivo} and \ref{prop:cell=camera}. Then $X$ is a projective wMDS, hence a MDS.
\end{proof}

An obvious consequence of the previous Propositions~\ref{prop:camera-filling} and \ref{prop:cellinNef} is the following

\begin{corollary}\label{cor:MDS=>polarizzato}
  Every MDS is a complete fillable wMDS.
\end{corollary}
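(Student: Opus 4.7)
The plan is to combine the two immediately preceding propositions: Proposition~\ref{prop:cellinNef} characterizes a MDS among complete wMDS by the presence of geometric chambers inside its $\Nef$ cone, and Proposition~\ref{prop:camera-filling} asserts that every geometric chamber contained in $\Nef(X)$ is automatically a filling cell. The corollary is then a one-line chain of implications, so the real work has already been carried out in setting up those two results.

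More concretely, I would first recall that a MDS $X$ is by Definition~\ref{def:wMDS} a projective (hence complete) wMDS, so the completeness part of the conclusion is immediate. Next I would invoke Proposition~\ref{prop:cellinNef} to produce a geometric chamber $\g \subseteq \Nef(X) \cong \Nef(W)$: since $X$ is a MDS, $\Nef(X)$ is a finite union of g-chambers, and in particular is full-dimensional in $\overline{\Mov}(X)$, so such a $\g$ exists. Then Proposition~\ref{prop:camera-filling} gives that $\g$ is a filling cell, with associated (unique) filling fan $\Si_{\g} \in \SF(V)$ refining the fan $\Si$ of the canonical ambient toric variety $W$. By Definition~\ref{def:fillable}, the existence of such a filling cell in $\Nef(X)$ says precisely that $X$ is fillable.

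There is no genuine obstacle here: every logical step is a direct citation of a statement already proved. The only point worth stressing in the writeup is that a MDS, being projective, comes equipped with an ample class in the relative interior of $\Nef(X)$, which guarantees that one of the g-chambers decomposing $\Nef(X)$ actually meets its interior (and hence can be taken as the filling cell $\g$). This verifies both items ``complete'' and ``fillable'' in the definition of a complete fillable wMDS, completing the proof.
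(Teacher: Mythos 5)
Your proof is correct and follows exactly the paper's intended route: the corollary is stated there as an immediate consequence of Propositions~\ref{prop:camera-filling} and \ref{prop:cellinNef}, which is precisely the chain you spell out (completeness from projectivity, a g-chamber in $\Nef(X)$ from Proposition~\ref{prop:cellinNef}, and fillability from Proposition~\ref{prop:camera-filling}). No gaps; your extra remark about the ample class lying in the interior of $\Nef(X)$ is a harmless elaboration of why the union of chambers is nonempty.
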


\begin{remark}\label{rem:polar_noMDS}
Notice that a converse of the previous Corollary~\ref{cor:MDS=>polarizzato} does not hold in general, as many well known examples of complete, $\Q$-factorial and non-projective toric varieties show. Consider e.g. either the Oda's example of a smooth, complete and non-projective toric threefold of Picard number 4, given in his famous book \cite[p.~84]{Oda}, or $\Q$-factorial, complete and non-projective 3-dimensional toric varieties, of Picard number 3, given by fans $\Si,\Si'$ in the previous Example~\ref{ex:cells}: both of them admit $\Nef$ cone given by a g-cell, meaning that they are fillable wMDS. But they cannot be MDS being non-projective.
\end{remark}

\begin{remark}\label{rem:fillableHp}
  A natural question is if the strong hypothesis, for a wMDS in Theorem~\ref{thm:complete-emb}, of being fillable could be dropped. In fact it seems to be an extra-condition when the statement of this theorem is compared with that of \cite[Prop.~2.11]{Hu-Keel}. Actually the previous Corollary~\ref{cor:MDS=>polarizzato} clarifies that the existence of a filling chamber is guaranteed when we are dealing with Mori dream spaces. Anyway one could ask if also being a wMDS may imply the existence of a filling cell in $\Nef(X)$.

  \noindent The answer is negative, in general, as the following Example~\ref{ex:noncompletabile} shows. Nevertheless, this example does not exclude the possibility that being a \emph{complete} wMDS may imply the existence of a filling cell, as explained in the following Remark~\ref{rem:exnoncompleto}. Unfortunately I was not able to find a counterexample to such a possibility: the point is exhibiting a complete wMDS $X$ whose canonical ambient toric variety $W$ is a non-complete and non sharply completable one! Does it exist?
\end{remark}

\subsection{Discussing hypotheses of Theorem \ref{thm:complete-emb}: some examples}\label{ssez:esempi}
In the present section we give two instances of possible occurrences in the completion procedure of Theorem~\ref{thm:complete-emb}. Namely, in the first example \ref{ex:ADHL-Ex3.2.5.8}, we will discuss a case in which $X$ is complete and fillable, hence $X\cong X'$ admitting a sharp completion $Z$ of the canonical ambient toric variety $W$. On the contrary, in the second Example~\ref{ex:noncompletabile} we will consider the case of a non-complete wMDS which is not fillable.

\begin{example}\label{ex:ADHL-Ex3.2.5.8} Let us start by considering a case in which $X$ is complete and coincides with the induced sharp completion $X'$, as described in item (1) of  Theorem~\ref{thm:complete-emb}. At this purpose consider the MDS $X$ given in \cite[Ex.~3.2.5.8]{ADHL}. This is also considered, up to a isomorphism, in the \emph{Cox ring database} \cite{CRdb}, where it is reported as the id no.~67.

\begin{figure}
\begin{center}
\includegraphics[width=8truecm]{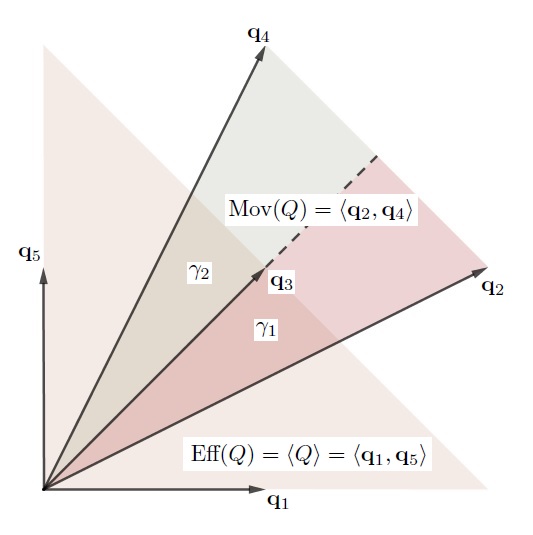}
\caption{\label{Fig2}The GKZ decomposition of the canonical ambient toric variety $W$ in Ex.~\ref{ex:ADHL-Ex3.2.5.8}.}
\end{center}
\end{figure}

\noindent Consider the grading map $d_K:K=\Z^5\twoheadrightarrow\Z^2$ represented by the weight matrix
\begin{equation*}
  Q=\left(
      \begin{array}{ccccc}
        1 & 2 & 1 & 1 & 0 \\
        0 & 1 & 1 & 2 & 1 \\
      \end{array}
    \right) =\left(
             \begin{array}{ccc}
               \q_1 & \cdots & \q_5 \\
             \end{array}
           \right)
\end{equation*}
and the quotient algebra $R=\K[x_1,\ldots,x_5]/(x_1x_4+x_2x_5+x_3^2)$, graded by $d_K$. This is consistent since the relation defining $R$ is homogeneous \wrt such a grading. Moreover $R$ turns out to be a Cox ring with $\X:=\{\overline{x}_1,\ldots,\overline{x}_5\}$ giving a Cox basis of $R$. Then $\overline{X}:=\Spec(R)\subseteq \Spec\K[\x]=:\overline{W}$ defines the total coordinate space of a wMDS $X:=\widehat{X}/(\K^*)^2$ where
\begin{equation*}
  \widehat{X}=\overline{X}\backslash B_X\ \text{being}\ B_X=\mathcal{V}(\irr(X))\ \text{and}\ \irr(X)=(\overline{x}_1\overline{x}_5,\overline{x}_2\overline{x}_4,\overline{x}_1\overline{x}_3\overline{x}_4,
  \overline{x}_2\overline{x}_3\overline{x}_5)
\end{equation*}
The canonical ambient toric variety of $X$ is the given by $W:=\widehat{W}/(\K^*)^2$ where
\begin{equation*}
  \widehat{W}=\overline{W}\backslash \widetilde{B}\ \text{being}\ \widetilde{B}=\mathcal{V}(\widetilde{\irr})\ \text{and}\ \widetilde{\irr}=(x_1x_5,x_2x_4,x_1x_3x_4,
  x_2x_3x_5)
\end{equation*}
Notice that $W=W(\Si)$ is the toric variety given by a fan matrix $V$, gale dual to $Q$, e.g.
\begin{equation*}
  V:=\left(
     \begin{array}{ccccc}
       1 & 0 & 0 & -1 & 2 \\
       0 & 1 & 0 & -2 & 3 \\
       0 & 0 & 1 & -1 & 1 \\
     \end{array}
   \right)=\left(
             \begin{array}{ccc}
               \v_1 & \cdots & \v_5 \\
             \end{array}
           \right)
\end{equation*}
and fan $\Si$ generated by the following collection of maximal cones
\begin{equation*}
  \Si(\max):=\{\langle\v_2,\v_3,\v_4\rangle\,\langle\v_1,\v_3,\v_5\rangle,\langle\v_2,\v_5\rangle,
  \langle\v_1,\v_4\rangle\}\,.
\end{equation*}
The GKZ decomposition of $\overline{\Eff}(X)\cong\overline{\Eff}(W)=\langle Q\rangle=:\Eff(Q)$ is represented in Fig.~\ref{Fig2}. In particular it turns out that
$$\Nef(X)=\Mov(X)\cong\Mov(W)=\Nef(W)=\Mov(Q)=\langle\q_2,\q_4\rangle$$
Following notation introduced in Theorem~\ref{thm:complete-emb}, it is the union of two g-chambers $\g_1=\langle\q_2,\q_3\rangle$ and $\g_2=\langle\q_3,\q_4\rangle$, giving the Nef cones of the only two possible sharp completions $Z_1(\g_1,\Si_1),Z_2(\g_2,\Si_2)$ of $W$. In particular one has
\begin{equation*}
  \SF(V)=\P\SF(V)=\{\Si_1,\Si_2\}
\end{equation*}
where $\Si_1$ and $\Si_2$ are simplicial and complete fans generated by the following collection of maximal cones
\begin{eqnarray*}
  \Si_1(3)&=&\{\langle\v_1,\v_3,\v_4\rangle\,\langle\v_1,\v_3,\v_5\rangle\,\langle\v_1,\v_4,\v_5\rangle\,
  \langle\v_2,\v_3,\v_4\rangle\,\langle\v_2,\v_3,\v_5\rangle\,\langle\v_2,\v_4,\v_5\rangle\}\\
  \Si_2(3)&=&\{\langle\v_1,\v_2,\v_4\rangle\,\langle\v_1,\v_2,\v_5\rangle\,\langle\v_1,\v_3,\v_4\rangle\,
  \langle\v_1,\v_3,\v_5\rangle\, \langle\v_2,\v_3,\v_4\rangle\,\langle\v_2,\v_3,\v_5\rangle\}
\end{eqnarray*}
For $k=1,2$, the associated (projective) completion $Z_k$ of $W$ encodes the completion $X'_k:=\widehat{X}'_k/(\K^*)^2$ of $X$, where
\begin{equation*}
  \widehat{X}'_k=\overline{X}\backslash B_k\ \text{being}\ B_k=\mathcal{V}(\irr(X'_k))
  \end{equation*}
  and
  \begin{eqnarray*}
    \irr(X'_1) &=& (\overline{x}_1\overline{x}_3,\overline{x}_1\overline{x}_4,\overline{x}_1\overline{x}_5,
    \overline{x}_2\overline{x}_3,\overline{x}_2\overline{x}_4,\overline{x}_2\overline{x}_5)\\
    \irr(X'_2) &=& (\overline{x}_1\overline{x}_4,\overline{x}_1\overline{x}_5,\overline{x}_2\overline{x}_4,
    \overline{x}_2\overline{x}_5,\overline{x}_3\overline{x}_4,\overline{x}_3\overline{x}_5)
  \end{eqnarray*}
Notice that $X$ is actually complete (then a MDS) since $X= X'_k$, for both $k=1,2$. Therefore $j_k=\iota_k\circ i:X\hookrightarrow Z_k$ is a neat embedding, for both $k=1,2$.
\end{example}

\begin{example}\label{ex:noncompletabile} The following is an example of a wMDS whose canonical toric am\-bient variety is a non sharply completable one. This fact shows that the hypothesis of existence of a filling cell in $\Nef(X)$, as given in Theorem~\ref{thm:complete-emb}, cannot be dropped.

\noindent Consider the grading map $d_K:K=\Z^7\twoheadrightarrow\Z^3$ represented by the weight matrix
\begin{equation*}
  Q=\left(
      \begin{array}{ccccccc}
        1 & 1 & 0 & 1 & 0 & 1 & 0 \\
        0 & 1 & 1 & 1 & 1 & 0 & 0 \\
        0 & 0 & 0 & 1 & 1 & 1 & 1 \\
      \end{array}
    \right) =\left(
             \begin{array}{ccc}
               \q_1 & \cdots & \q_7 \\
             \end{array}
           \right)
\end{equation*}
and the quotient algebra
$$
R=\K[x_1,\ldots,x_7]/(x_1x_3x_7+x_1x_5+x_2x_7+ x_3x_6+x_4)
$$
graded by $d_K$.
Moreover, the divisor associated with the generator $\overline{x}_i$, given by cutting with $\{x_i=0\}$, turns out to be a prime divisor, so giving that $\overline{x}_i$ is $\Cl(X)$-prime \cite[Prop.\,1.5.3.5\,(iii)]{ADHL}. Then $R$ turns out to be a Cox ring with $\X:=\{\overline{x}_1,\ldots,\overline{x}_7\}$ giving a Cox basis of $R$,  $\overline{X}:=\Spec(R)\subseteq \Spec\K[\x]=:\overline{W}$ defines the total coordinate space of a wMDS $X:=\widehat{X}/(\K^*)^3$ where $\widehat{X}=\overline{X}\backslash B_X$, being $B_X=\mathcal{V}(\irr(X))$ and
\begin{equation*} \irr(X)=\left(
                            \begin{array}{c}
                              \overline{x}_2\overline{x}_5\overline{x}_6,\overline{x}_1\overline{x}_2\overline{x}_3\overline{x}_7,
\overline{x}_1\overline{x}_2\overline{x}_5\overline{x}_7,\overline{x}_1\overline{x}_3\overline{x}_4\overline{x}_6,
\overline{x}_1\overline{x}_3\overline{x}_4\overline{x}_7, \\
\overline{x}_1\overline{x}_3\overline{x}_5\overline{x}_6,\overline{x}_1\overline{x}_3\overline{x}_5\overline{x}_7,
  \overline{x}_1\overline{x}_3\overline{x}_6\overline{x}_7, \overline{x}_1\overline{x}_4\overline{x}_5\overline{x}_6,
  \overline{x}_1\overline{x}_4\overline{x}_5\overline{x}_7,\\
  \overline{x}_2\overline{x}_3\overline{x}_4\overline{x}_6,
  \overline{x}_2\overline{x}_3\overline{x}_4\overline{x}_7,\overline{x}_2\overline{x}_3\overline{x}_6\overline{x}_7, \overline{x}_2\overline{x}_4\overline{x}_5\overline{x}_7\\
                            \end{array}
                          \right)
\end{equation*}
The canonical ambient toric variety of $X$ is then given by $W:=\widehat{W}/(\K^*)^3$ where $\widehat{W}=\overline{W}\backslash \widetilde{B}$, being $\widetilde{B}=\mathcal{V}(\widetilde{\irr})$ and $\widetilde{\irr}=\pi_\X^{-1}(\irr(X))$. In particular the bunch $\B$ of $W$ is generated by the following collection of minimal cones
\begin{equation*}
  \B(\min):=\left\{
               \begin{array}{c}
                 \langle\q_2,\q_5,\q_6\rangle,\langle\q_1,\q_2,\q_3,\q_7\rangle,
\langle\q_1,\q_2,\q_5,\q_7\rangle,\langle\q_1,\q_3,\q_4,\q_6\rangle,
\\
\langle\q_1,\q_3,\q_4,\q_7\rangle,\langle\q_1,\q_3,\q_5,\q_6\rangle,\langle\q_1,\q_3,\q_5,\q_7\rangle,\langle
  \q_1,\q_3,\q_6,\q_7\rangle,\\
  \langle \q_1,\q_4,\q_5,\q_6\rangle,\langle
  \q_1,\q_4,\q_5,\q_7\rangle,\langle\q_2,\q_3,\q_4,\q_6\rangle,\\
  \langle
  \q_2,\q_3,\q_4,\q_7\rangle,\langle\q_2,\q_3,\q_6,\q_7\rangle,\langle \q_2,\q_4,\q_5,\q_7\rangle\\
               \end{array}
             \right\}
\end{equation*}
Then $W$ is a $\Q$-factorial non-complete toric variety.
\begin{figure}
\begin{center}
\includegraphics[width=8truecm]{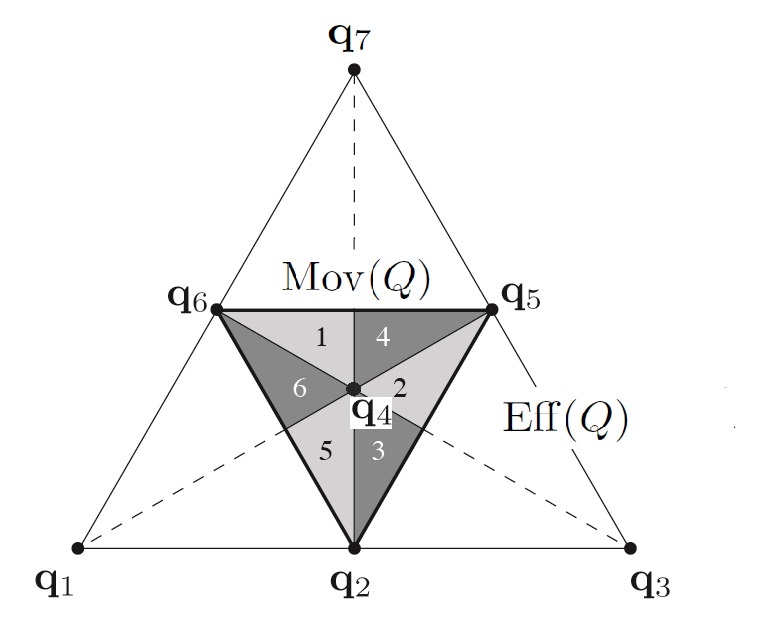}
\caption{\label{Fig3}The GKZ decomposition of the canonical ambient toric variety $W$ in Ex.~\ref{ex:noncompletabile}.}
\end{center}
\end{figure}
The GKZ decomposition of $\overline{\Eff}(X)\cong\overline{\Eff}(W)=\Eff(Q)$ is represented in Fig.~\ref{Fig3}, where chambers from 1 to 6 are the Nef cones associated with elements of $\P\SF(V)=\SF(V)$, being $V$ a Gale dual matrix of $Q$, given e.g. by
\begin{equation}\label{V}
  V:=\left(
     \begin{array}{ccccccc}
       1 & 0 & 0 & 0 & 0 & -1 & 1 \\
       0 & 1 & 0 & 0 & -1 & -1 & 2\\
       0 & 0 & 1 & 0 & -1 & 0 & 1 \\
       0 & 0 & 0 & 1 & -1 & -1 & 1 \\
     \end{array}
   \right)=\left(
             \begin{array}{ccc}
               \v_1 & \cdots & \v_7 \\
             \end{array}
           \right)
\end{equation}
In particular
\begin{equation*}
  \Nef(X)\cong\Nef(W)=\bigcap_{\tau\in\B}\tau = \langle\q_4\rangle
\end{equation*}
which is strictly contained in every chamber from 1 to 6. Consequently, none of these chambers can be a filling cell in $\Nef(X)$ and $W$ cannot admit any sharp completion.
\end{example}

\begin{remark}\label{rem:exnoncompleto}
An interesting question is if \emph{assuming $X$ complete in Theorem~\ref{thm:complete-emb} may guarantee the existence of a filling cell in $\Nef(X)$}.

\noindent Notice that the previous Example~\ref{ex:noncompletabile} cannot exclude such a possibility, as $X$ is not complete, therein.  In fact the six fans in $\SF(V)$ are generated by the six collections of maximal cones
$$\forall\,i=1,\ldots,6\quad\Si_i(4)=\{\langle V_I\rangle\,|\,I\in\I_{\Si_i}(4)\}$$
where
\begin{eqnarray*}
  I_{\Si_1}(4) &=& \left\{\{3, 4, 5, 6\}, \{2, 4, 5, 6\}, \{2, 3, 5, 6\}, \{2, 3, 4, 6\}, \{1, 3, 4, 5\}, \{1, 2, 4, 5\},\right.\\
   &&\left.\{1, 2, 3, 5\}, \{2, 3, 4, 7\}, \{1, 3, 4, 7\}, \{1, 2, 4, 7\}, \{1, 2, 3, 7\}\right\} \\
  I_{\Si_2}(4) &=& \left\{\{2, 4, 5, 6\}, \{1, 4, 5, 6\}, \{1, 2, 5, 6\}, \{1, 3, 4, 6\}, \{3, 4, 6, 7\}, \{2, 4, 6, 7\}, \right.\\
  && \left.\{1, 3, 6, 7\}, \{1, 2, 6, 7\}, \{1, 2, 4, 5\}, \{1, 3, 4, 7\}, \{1, 2, 4, 7\}\right\} \\
  I_{\Si_3}(4) &=&\left\{\{2, 4, 5, 6\}, \{1, 4, 5, 6\}, \{2, 5, 6, 7\}, \{1, 3, 4, 6\}, \{3, 4, 6, 7\}, \{2, 4, 6, 7\}, \right.\\
  && \left. \{1, 5, 6, 7\}, \{1, 3, 6, 7\}, \{2, 4, 5, 7\}, \{1, 4, 5, 7\}, \{1, 3, 4, 7\}\right\} \\
  I_{\Si_4}(4) &=& \left\{\{2, 4, 5, 6\}, \{2, 3, 4, 6\}, \{1, 4, 5, 6\}, \{1, 2, 5, 6\}, \{1, 3, 4, 6\}, \{1, 2, 3, 6\}, \right.\\
  && \left.\{1, 2, 4, 5\}, \{2, 3, 4, 7\}, \{1, 3, 4, 7\}, \{1, 2, 4, 7\}, \{1, 2, 3, 7\}\right\} \\
  I_{\Si_5}(4) &=& \left\{\{3, 4, 5, 6\}, \{2, 4, 5, 6\}, \{3, 5, 6, 7\}, \{2, 5, 6, 7\}, \{3, 4, 6, 7\}, \{2, 4, 6, 7\}, \right.\\
  && \left.\{1, 3, 4, 5\}, \{2, 4, 5, 7\}, \{1, 4, 5, 7\}, \{1, 3, 5, 7\}, \{1, 3, 4, 7\}\right\} \\
  I_{\Si_6}(4) &=& \left\{\{3, 4, 5, 6\}, \{2, 4, 5, 6\}, \{2, 3, 5, 6\}, \{2, 3, 4, 6\}, \{1, 3, 4, 5\}, \{2, 4, 5, 7\}, \right.\\
  && \left.\{2, 3, 5, 7\}, \{2, 3, 4, 7\}, \{1, 4, 5, 7\}, \{1, 3, 5, 7\}, \{1, 3, 4, 7\}\right\}
\end{eqnarray*}
On the other hand, by Gale duality the collection of minimal cones $\B(\min)$ determines the collection of maximal cones of the fan $\Si$ of $W$, namely
$$\Si(\max)=\{\langle V_I\rangle\,|\,I\in\I_\Si(\max)\}$$
with
\begin{eqnarray*}
  \I_\Si(\max) &=& \left\{\{1, 3, 4, 7\}, \{4, 5, 6\}, \{3, 4, 6\}, \{2, 5, 7\}, \{2, 5, 6\}, \{2, 4, 7\}, \{2, 4, 6\}, \right.\\
  && \left. \{2, 4, 5\}, \{2, 3, 7\}, \{2, 3, 6\}, \{1, 5, 7\}, \{1, 5, 6\}, \{1, 4, 5\}, \{1, 3, 6\}\right\}
\end{eqnarray*}
Consider the following 3-cones not belonging to $\Si$
\begin{eqnarray*}
  \langle\v_1,\v_2,\v_5\rangle&\in&\Si_1\cap\Si_2\cap\Si_4\\
  \langle\v_2,\v_6,\v_7\rangle&\in&\Si_2\cap\Si_3\cap\Si_5\\
  \langle\v_3,\v_5,\v_6\rangle&\in&\Si_1\cap\Si_5\cap\Si_6
\end{eqnarray*}
For $k=1,\ldots,6$, let $X'_k$ be the closure, inside the sharp completion $Z_k(\Si_k)$ of $W$, of the image $\iota_k\circ i(X)$, being $\iota_k:W\hookrightarrow Z_k$ the natural open embedding. The closures of the toric orbits of each of the 3-cones above give rise to three 1-cycles intersecting $X'_k$, for the associated value of $k$, in a finite number of points living outside of the irrelevant loci.   Namely, one has
\begin{eqnarray*}
\overline{W}\supseteq\mathcal{V}(x_1,x_2,x_5,x_3x_6+x_4)& \Rightarrow & [0:0:1:1:0:-1:1]\in X'_1\cap X'_2\cap X'_4\\
\overline{W}\supseteq\mathcal{V}(x_2,x_6,x_7,x_1x_5+x_4)& \Rightarrow & [1:0:1:1:-1:0:0]\in X'_2\cap X'_3\cap X'_5\\
\overline{W}\supseteq\mathcal{V}(x_3,x_5,x_6,x_2x_7+x_4)& \Rightarrow & [1:1:0:1:0:0:-1]\in X'_1\cap X'_5\cap X'_6
\end{eqnarray*}
Clearly points exhibited on the right side cannot belong to $X$, showing that $X$ is not complete.
\end{remark}

\subsection{Projective embedding of lower rank complete wMDS}\label{ssez:proj-emb}

In the recent paper \cite{RT-r2proj}, jointly written with L.~Terracini, we proved\footnote{In \cite{RT-r2proj} we assumed $\C$ as ground field. Actually all techniques and arguments therein employed are completely extendable to a general field $\K=\overline{\K}$ with $\Char\K=0$.} the following

\begin{theorem}[Thm.~3.2 in \cite{RT-r2proj}]\label{thm:r2}
  Every $\Q$-factorial complete toric variety of Picard number $r\leq 2$ is projective.
\end{theorem}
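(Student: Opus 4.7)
The plan is to show that the nef cone $\Nef(X) \subseteq N^1(X)_\R \cong \R^r$ has full dimension $r$; combined with Propositions \ref{prop:nefècella}, \ref{prop:camera} and \ref{prop:cell=camera}, this upgrades $\Nef(X)$ to a g-chamber, whose associated fan---equal to $\Sigma$, since a $\Q$-factorial complete toric variety is its own canonical ambient toric variety---is that of a quasi-projective variety by Proposition \ref{prop:quasiproiettivo}(3); completeness then yields projectivity. By Corollary \ref{cor:Qpositive} and Remark \ref{rem:corrispondenze}(b), $\Nef(X) = \bigcap_{I \in \I_{\Sigma_{\max}}} \langle Q^I \rangle$, where each $\sigma_I \in \Sigma_{\max}$ is simplicial of dimension $n$, so $V_I$ is invertible and Gale duality gives $\rk(Q^I) = r$, i.e.\ each $\langle Q^I \rangle$ is itself $r$-dimensional.

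For $r = 1$, the $W$-positive $1\times m$ matrix $Q$ may be taken with all entries strictly positive, so every $\langle Q^I\rangle = \R_{\geq 0}$ and hence $\Nef(X) = \R_{\geq 0}$ is 1-dimensional, done.

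For $r = 2$, the plan is more delicate. Choose coordinates in $N^1(X)_\R \cong \R^2$ so that all columns of $Q$ lie in the upper half-plane, and order them cyclically by slope as $\q_1,\dots,\q_m$. Each bunch cone $\langle Q^I\rangle = \langle \q_{j(I)}, \q_{k(I)}\rangle$ then corresponds to a closed angular arc $[\theta_{j(I)}, \theta_{k(I)}]$, and $\Nef(X)$ corresponds to the intersection arc $[\max_I \theta_{j(I)},\, \min_I \theta_{k(I)}]$. Assume for contradiction that this arc degenerates to a single ray $\langle\q_{j^\ast}\rangle$. Then two maximal cones $\sigma_{I_1}, \sigma_{I_2} \in \Sigma_{\max}$ exist such that $\q_{j^\ast}$ is the upper generator of $\langle Q^{I_1}\rangle$ and the lower generator of $\langle Q^{I_2}\rangle$. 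Translating back to $N_\R$ via Gale duality, $\sigma_{I_1}$ and $\sigma_{I_2}$ must share the facet indexed by $I_1 \cap I_2$, with their ``opposite'' rays in Gale dual pinned to the same ray $\q_{j^\ast}$; a direct computation shows that the configuration of $\sigma_{I_1}\cup\sigma_{I_2}$ together with the simpliciality of $\Sigma$ leaves an open wedge of $N_\R$ uncovered by $|\Sigma|$, contradicting completeness.

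The main obstacle is this final combinatorial step: making precise how the angular collapse in $\R^2$ forces a genuine covering failure in $N_\R \cong \R^n$. One executes it by exploiting the cyclic rigidity of rank-2 bunches---namely, that the Gale dual of a wall shared between two maximal cones of $\Sigma$ is exactly an edge shared between the corresponding 2-dimensional bunch cones---so that a 1-dimensional $\Nef(X)$ forces the bunch $\mathcal{B}$ to be ``too small'' to correspond, via Gale duality, to a complete simplicial fan. It is precisely this rigidity that breaks down as soon as $r \geq 3$, as already exhibited by Example \ref{ex:cells}.
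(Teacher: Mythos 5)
The paper does not actually reprove this statement: Theorem~\ref{thm:r2} is imported verbatim from \cite{RT-r2proj}, and the only place its argument surfaces here is in the proof of Theorem~\ref{thm:r<3fillable}, where the key fact is recorded as: \emph{the $2$-dimensional GKZ-cone contained in $\Nef(W)$ is the minimal $2$-cone contained in every cone of the bunch $\B=\{\langle Q^I\rangle\,|\,I\in\I_\Si\}$}. Your overall strategy --- reduce projectivity to full-dimensionality of $\Nef(X)=\bigcap_{I\in\I_{\Si_{\max}}}\langle Q^I\rangle$ via Remark~\ref{rem:corrispondenze}(b), Propositions~\ref{prop:camera}, \ref{prop:cell=camera} and \ref{prop:quasiproiettivo}(3), using that a complete $\Q$-factorial toric variety is its own canonical ambient variety --- is exactly the strategy of the cited proof, and your $r=1$ case and the reduction itself are sound.

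For $r=2$, however, there is a genuine gap at precisely the decisive step, and it consists of two unproven assertions. First, from the fact that $\langle Q^{I_1}\rangle$ and $\langle Q^{I_2}\rangle$ meet only along the ray $\langle\q_{j^*}\rangle$ you conclude that $\s_{I_1}$ and $\s_{I_2}$ ``must share the facet indexed by $I_1\cap I_2$'': this does not follow. Two maximal cones of $\Si$ whose Gale-dual bunch cones share an edge need not be adjacent in $N_\R$, and $I_1\cap I_2$ need not have cardinality $n-1$; the chosen extremal pair $(I_1,I_2)$ carries no adjacency information. Second, the contradiction itself --- ``a direct computation shows that \dots an open wedge of $N_\R$ is uncovered by $|\Si|$'' --- is exactly the content of the theorem and is nowhere carried out; your closing paragraph concedes that this is ``the main obstacle'' and replaces the computation with a description of what it should achieve. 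Since the statement genuinely fails for $r=3$ (Example~\ref{ex:cells} and Remark~\ref{rem:polar_noMDS} exhibit complete, non-projective, $\Q$-factorial toric varieties of Picard number $3$), all of the $r=2$-specific work lives in this missing step; as written, the proposal is a correct reduction followed by an unproven claim rather than a proof. To complete it you would need the analogues of Lemmas~1.2 and~2.1 of \cite{RT-r2proj}, which, for a positive rank-$2$ $W$-matrix $Q$ (Corollary~\ref{cor:Qpositive}), exhibit a distinguished pair of columns spanning a $2$-cone contained in $\langle Q^I\rangle$ for every $I\in\I_\Si$.
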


First of all let us observe that this result allows us to prove a converse of Corollary~\ref{cor:MDS=>polarizzato} for a complete fillable wMDS of Picard number $r\leq 2$. In fact Theorem~\ref{thm:complete-emb} ensures that a complete fillable wMDS $X$ can be embedded in a complete $\Q$-factorial variety $Z$ ha\-ving the same Picard number of $X$. Hence Theorem~\ref{thm:r2} shows that $Z$ is projective, so giving a projective embedding of $X$, too. This gives that

\begin{corollary}
  Every complete wMDS of Picard number $r\leq 2$ is fillable if and only if it is a MDS.
\end{corollary}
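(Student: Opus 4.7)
My plan is to prove the two directions separately, noting that one direction is already handed to us by Corollary~\ref{cor:MDS=>polarizzato} and only the converse requires work, where the rank hypothesis $r \leq 2$ enters crucially through Theorem~\ref{thm:r2}.

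For the easy direction, let $X$ be a complete wMDS of Picard number $r \leq 2$ that is also a MDS. Then by Corollary~\ref{cor:MDS=>polarizzato} (which does not need any assumption on $r$), $X$ is fillable. So this direction is immediate.

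For the nontrivial direction, assume $X$ is a complete fillable wMDS with $r := \rk\Cl(X) \leq 2$, and fix a Cox basis $\X$. Let $\g \subseteq \Nef(X) \cong \Nef(W)$ be a filling cell, where $i : X \hookrightarrow W$ is the $\X$-canonical toric embedding. By Theorem~\ref{thm:complete-emb}~(1), since $X$ is complete the associated filling fan $\Si'$ must satisfy $\langle V \rangle = \R^n$ (otherwise $X$ could not embed as a closed, hence complete, subvariety of $Z$); thus $Z = Z(\g,\Si')$ is a complete $\Q$-factorial toric variety, and the composed embedding $j = \iota \circ i : X \hookrightarrow Z$ is a neat closed embedding. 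By Theorem~\ref{thm:complete-emb}~(2), the Picard number is preserved: $\rk\Cl(Z) = \rk\Cl(W) = \rk\Cl(X) \leq 2$.

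Now the key point: by Theorem~\ref{thm:r2}, every $\Q$-factorial complete toric variety of Picard number at most $2$ is projective. Hence $Z$ is projective, and since $X$ embeds in $Z$ as a closed subvariety via $j$, $X$ is itself projective. A projective wMDS is, by Definition~\ref{def:wMDS}, a MDS, which completes the proof.

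The main (and essentially the only) obstacle is recognizing which previously established results to combine: Theorem~\ref{thm:complete-emb} is what converts fillability into the existence of a sharp toric completion preserving the Picard number, and Theorem~\ref{thm:r2} is what converts the low Picard number hypothesis into projectivity of that toric completion. No additional work seems required beyond chaining these together with Corollary~\ref{cor:MDS=>polarizzato}.
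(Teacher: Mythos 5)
Your proposal is correct and follows essentially the same route as the paper: the forward direction is Corollary~\ref{cor:MDS=>polarizzato}, and the converse chains Theorem~\ref{thm:complete-emb} (fillability gives a sharp, Picard-number-preserving toric completion $Z$ of $W$) with Theorem~\ref{thm:r2} ($Z$ is projective since $r\leq 2$), so that $X$ embeds closedly in a projective variety and is therefore a MDS. One small correction: your parenthetical justification for why $\langle V\rangle=\R^n$ does not work as stated, since a complete variety can perfectly well sit as a closed subvariety of a non-complete one; the correct reason that $V$ is an $F$-matrix (hence $\langle V\rangle=\R^n$ and $Z$ is complete) when $X$ is a complete wMDS is Corollary~\ref{cor:Qpositive}, which the paper's argument implicitly invokes at the same point.
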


Consequently, examples given in Example~\ref{ex:cells} and recalled in Remark~\ref{rem:polar_noMDS}, turn out to be sharp counterexamples \wrt the Picard number.

Now we are going to show that

\begin{theorem}\label{thm:r<3fillable}
  Every complete wMDS of Picard number $r\leq 2$ is fillable.
\end{theorem}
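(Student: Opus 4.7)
The plan is to reduce fillability of $X$ to showing that the Nef cone $\Nef(W)\cong\Nef(X)$ (via the isomorphism $i^*_\R$ of Proposition~\ref{prop:Qfactorial}), viewed in $N^1(W)\cong\R^r$, contains a geometric chamber of the GKZ decomposition of $\Mov(Q)$: such a chamber is automatically a filling cell by Proposition~\ref{prop:camera-filling}, making $X$ fillable. Crucially, completeness of $X$ ensures via Corollary~\ref{cor:Qpositive} that $V$ is an $F$-matrix, whence for $r\leq 2$ the set $\SF(V)$ is non-empty and consists of complete simplicial fans. By Theorem~\ref{thm:r2}, each $\Sigma'\in\SF(V)$ yields a projective $Z(\Sigma')$, whose Nef cone is therefore full-dimensional in $\R^r$ and hence a g-chamber of the GKZ decomposition of $\Mov(Q)$ by Proposition~\ref{prop:camera}.

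The case $r=1$ is straightforward. Corollary~\ref{cor:Qpositive} forces the weight matrix $Q$ to be a strictly positive $1\times(n+1)$ row vector. Each cone $\langle Q^I\rangle$ for $|I^c|=1$ reduces to $\R_{\geq 0}\cdot\q_i=\R_{\geq 0}$, so that $\Nef(W)=\bigcap_{I\in\I_\Sigma}\langle Q^I\rangle=\R_{\geq 0}$ coincides with the unique chamber of the GKZ decomposition of $\Mov(Q)=\R_{\geq 0}$. This chamber is the sought filling cell.

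For $r=2$, the task reduces to showing that $\Nef(W)=\bigcap_{\tau\in\B}\tau$ is $2$-dimensional in $\R^2$, where $\B$ denotes the bunch of cones Gale dual to the fan $\Sigma$ of $W$ (Remark~\ref{rem:corrispondenze}). Indeed, once the $2$-dimensionality is established, the intersection must contain at least one of the $2$-dimensional chambers of the GKZ decomposition of $\Mov(Q)$, providing the required filling cell.

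The hard part is precisely this last step: proving $\dim\Nef(W)=2$ from the completeness of $X$ together with $r=2$. I would attempt a contradiction argument: if $\dim\Nef(W)\leq 1$, then two cones $\tau_1,\tau_2\in\B$ would meet only along a ray or at the origin, and Gale duality would translate this into a combinatorial configuration of two maximal cones $\sigma_1,\sigma_2\in\Sigma$ that cannot be simultaneously contained in any complete fan $\Sigma'\in\SF(V)$. Since $V$ is an $F$-matrix and $r=2$ makes $\SF(V)$ abundantly populated by projective completions (Theorem~\ref{thm:r2}), one would exploit the completeness of $X$ as a closed subvariety of $W$ to rule out this configuration---for example by producing a complete irreducible curve on $X$ whose intersection numbers with the pullbacks of the torus-invariant prime divisors of $W$ force $\Nef(W)$ to carry a non-trivial interior class, yielding the required contradiction.
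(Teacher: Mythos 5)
Your overall framing coincides with the paper's: reduce to showing that $\Nef(X)\cong\Nef(W)$ contains a full-dimensional GKZ-cone, which is a g-chamber by Proposition~\ref{prop:camera} and hence a filling cell by Proposition~\ref{prop:camera-filling}; the $r=1$ case is handled correctly, and the reduction of the $r=2$ case to the claim $\dim\Nef(W)=2$ is also correct. The problem is that this last claim \emph{is} the theorem, and you leave it as a plan rather than a proof. The paper closes the gap combinatorially: by Corollary~\ref{cor:Qpositive}, completeness of $X$ forces $Q$ to be a $W$-matrix and $V$ to be an $F$-matrix, and this is exactly what is needed for Lemmas~1.2 and 2.1 of \cite{RT-r2proj} to apply to the (generally non-complete) ambient variety $W$; the argument of \cite[Thm.~2.2]{RT-r2proj} then produces a minimal $2$-dimensional cone contained in every $\langle Q^I\rangle$ with $I\in\I_\Si$, i.e.\ in every member of the bunch $\B$, so that $\Nef(W)=\bigcap_{I\in\I_\Si}\langle Q^I\rangle$ is full-dimensional. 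Nothing about curves on $X$ or about completions $Z(\Si')$ enters; the statement is purely about the weight matrix $Q$ and the simplicial fan $\Si$ of $W$.

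Your proposed route to the contradiction is the part that would fail. First, you cannot invoke Theorem~\ref{thm:r2} for $W$ itself, since $W$ need not be complete; the whole point of Corollary~\ref{cor:Qpositive} is to transfer just enough of the completeness of $X$ into the combinatorics of $Q$ and $V$. Second, the suggestion of ``producing a complete irreducible curve on $X$ whose intersection numbers\ldots force $\Nef(W)$ to carry a non-trivial interior class'' is not developed and does not obviously work: the existence of complete curves meeting $X$ says nothing about full-dimensionality of $\Nef(W)$ (in Example~\ref{ex:noncompletabile} the ambient $W$ has $\Nef(W)$ equal to a single ray even though each completion $Z_k$ carries plenty of complete curves), and, tellingly, your sketch never uses the hypothesis $r=2$ at the decisive moment --- yet the statement is false for $r=3$, so any valid argument must use it there. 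The missing ingredient is precisely the rank-two combinatorial lemma of \cite{RT-r2proj} (two cones of the bunch cannot meet only along a ray when $Q$ is a $W$-matrix of rank $2$), which you neither prove nor cite.
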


\begin{proof} We will actually prove a stronger statement:
\begin{itemize}
\item[(*)] \emph{if $X$ is a complete wMDS of Picard number $r\leq2$ then $\Nef(X)$ contains a full-dimensional GKZ-cone}.
\end{itemize}
Then the thesis follows by recalling Propositions~\ref{prop:camera} and \ref{prop:camera-filling}.

To prove (*), notice that Remark\,\ref{rem:corrispondenze}\,(b) shows that
$$\Nef(X)\cong\Nef(W)=\bigcap_{I\in\I_\Si}\langle Q^I\rangle$$
being $\Si$ a fan of the canonical ambient toric variety $W$. Corollary~\ref{cor:WèQ-f} shows that $W$ is $\Q$-factorial, that is $\Si$ is simplicial. Then $\langle Q^I\rangle$ is a 2-dimensional cone, for every $I\in\I_\Si$. Moreover Corollary~\ref{cor:Qpositive} shows that the weight matrix $Q$ is a $W$-matrix and a fan matrix $V$ of $W$ is an $F$-matrix. This is enough to ensure that lemmas~1.2 and 2.1 in \cite{RT-r2proj} still hold for the toric variety $W$. Then the proof of (*) goes on exactly as the proof of \cite[Thm.~2.2]{RT-r2proj}, giving that the 2-dimensional GKZ-cone contained in $\Nef(W)$ is the minimal 2-cone contained in every cone of the bunch of cones $\B=\{\langle Q^I\rangle\,|\,I\in\I_\Si\}$.
\end{proof}

Recalling Proposition~\ref{prop:cellinNef} and Theorem~\ref{thm:complete-emb}, the previous result gives im\-me\-dia\-te\-ly the following

\begin{corollary}\label{cor:r<3}
  Every complete wMDS of Picard number $r\leq 2$ is a MDS.
\end{corollary}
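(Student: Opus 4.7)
The plan is to combine the fillability result of Theorem \ref{thm:r<3fillable} with the sharp completion construction of Theorem \ref{thm:complete-emb}, and then invoke Theorem \ref{thm:r2} to upgrade the ambient toric completion from merely complete to projective. Once $X$ embeds into a projective variety, $X$ is itself projective, and by Definition \ref{def:wMDS} a projective wMDS is a MDS.

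More concretely, I would first apply Theorem \ref{thm:r<3fillable} to the given complete wMDS $X$ of Picard number $r\le 2$, producing a filling cell $\g\subseteq\Nef(X)\cong\Nef(W)$, where $i:X\hookrightarrow W$ is the $\X$-canonical toric embedding for a chosen Cox basis $\X$ of $\Cox(X)$. Since $X$ is complete, Corollary \ref{cor:Qpositive} ensures that a fan matrix $V$ of $W$ is an $F$-matrix, so $\langle V\rangle = N_\R\cong\R^n$. Feeding $\g$ into Theorem \ref{thm:complete-emb} then yields a genuinely \emph{complete} sharp completion $Z$ of $W$ together with a neat closed embedding $\iota\circ i:X\hookrightarrow Z$ of the same Picard number $r$, using the fact that $X'\cong X$ when $X$ is complete.

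The final step is to apply Theorem \ref{thm:r2}: the complete $\Q$-factorial toric variety $Z$ has Picard number $r\le 2$, hence is projective. Therefore $X$ is a closed subvariety of a projective variety, hence projective itself, and thus a MDS.

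I expect no genuine obstacle; the three ingredients (Theorems \ref{thm:r<3fillable}, \ref{thm:complete-emb}, and \ref{thm:r2}) dovetail exactly. The only verification worth making explicit is that the sharp completion $Z$ produced by Theorem \ref{thm:complete-emb} is indeed complete (rather than only a partial completion of $W$), which is where the $F$-matrix property from Corollary \ref{cor:Qpositive} enters, and that the induced embedding is neat and Picard-number preserving, both of which are built into the statement of Theorem \ref{thm:complete-emb} under the hypothesis that $X$ is complete and fillable. As an alternative proof route one could argue directly via Proposition \ref{prop:cellinNef}: the stronger statement $(*)$ inside the proof of Theorem \ref{thm:r<3fillable} combined with Proposition \ref{prop:camera} shows that $\Nef(X)$ contains a full-dimensional g-chamber; in rank $r\le 2$, convexity of $\Nef(X)$ together with the GKZ-fan structure on $\overline{\Mov}(X)$ would then force $\Nef(X)$ to be a finite union of g-chambers, and Proposition \ref{prop:cellinNef} would conclude. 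The embedding route above seems the most economical, and matches the author's remark that the corollary follows from the previous result together with Proposition \ref{prop:cellinNef} and Theorem \ref{thm:complete-emb}.
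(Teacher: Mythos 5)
Your proposal is correct and follows essentially the same route as the paper: the author derives the corollary by combining Theorem~\ref{thm:r<3fillable} with the sharp completion of Theorem~\ref{thm:complete-emb} (made complete via the $F$-matrix property from Corollary~\ref{cor:Qpositive}) and the projectivity of complete $\Q$-factorial toric varieties of Picard number $\le 2$ from Theorem~\ref{thm:r2}, exactly as you do; the alternative reading via Proposition~\ref{prop:cellinNef} that you mention is also the one the author cites. No gaps.
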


Let us conclude the present section by recalling that every smooth and complete toric variety of Picard number $r\leq 3$ is projective, by a well known result of Kleinschmidt and Sturmfels \cite{Kleinschmidt-Sturmfels}. This clearly gives the following

\begin{corollary}\label{cor:KSforwMDS}
Every smoothly fillable and complete wMDS of Picard number $r\leq 3$ is a MDS.
\end{corollary}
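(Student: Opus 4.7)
The plan is to mimic the argument already sketched just above the statement for the $r\leq 2$ case (based on Theorem~\ref{thm:r2}), replacing the Berchtold-Rossi-Terracini projectivity result with the classical Kleinschmidt-Sturmfels theorem for smooth complete toric varieties of Picard number at most $3$. The key observation is that the smooth fillability hypothesis is exactly what makes the sharp completion produced by Theorem~\ref{thm:complete-emb} a \emph{smooth} complete toric variety, so the Kleinschmidt-Sturmfels theorem directly applies.

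Concretely, I would start by unpacking Definitions~\ref{def:filling} and~\ref{def:fillable}: a smoothly fillable complete wMDS $X$ comes equipped with a Cox basis $\X$, a smooth canonical toric ambient variety $W=W(\Si)$ and a smooth filling cell $\g\subseteq\Nef(X)\cong\Nef(W)$ associated with a regular filling fan $\Si'\in\SF(V)$ containing $\Si$, where $V$ is a fan matrix of $W$. Feeding $\Si'$ into Theorem~\ref{thm:complete-emb}(1) yields a $\Q$-factorial partial completion $Z=Z(\g,\Si')$ of $W$ with $\Nef(Z)=\g$. Since $X$ is complete, Corollary~\ref{cor:Qpositive} guarantees that $V$ is an $F$-matrix, so $|\Si'|=\langle V\rangle=\R^n$ and $Z$ is therefore complete; because $\Si'$ is regular, $Z$ is moreover smooth. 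Theorem~\ref{thm:complete-emb}(2) identifies $\Cl(Z)\cong\Cl(X)$, whence $\rk\Cl(Z)=r\leq 3$.

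At this point the theorem of Kleinschmidt-Sturmfels \cite{Kleinschmidt-Sturmfels} forces $Z$ to be projective, so the fan $\Si'$ admits a strictly convex support function and $\Nef(Z)=\g$ is full-dimensional in $N^1(Z)_\R\cong N^1(X)_\R$. Combining Propositions~\ref{prop:cell=camera} and~\ref{prop:camera-filling}, a full-dimensional g-cell inside $\Mov(Q)\supseteq\Nef(X)$ is a g-chamber; hence $\g$ is a g-chamber contained in $\Nef(X)$. Finally, by Proposition~\ref{prop:cellinNef}, the existence of a g-chamber in $\Nef(X)$ is equivalent to $X$ being a MDS, and the proof is complete.

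There is no real obstacle here beyond checking that every hypothesis needed to invoke the above-quoted results is indeed in force; the only mildly delicate point is ensuring that the sharp completion $Z$ really is smooth and complete (not merely $\Q$-factorial and partial), which is exactly what the two assumptions of smooth fillability and completeness of $X$ provide via Corollary~\ref{cor:Qpositive} and Definition~\ref{def:filling}.
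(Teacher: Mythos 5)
Your proposal is correct and follows essentially the same route as the paper, which likewise obtains the smooth complete sharp completion $Z$ from Theorem~\ref{thm:complete-emb} and invokes the Kleinschmidt--Sturmfels theorem to conclude projectivity. The only cosmetic difference is that you close the argument by identifying $\g$ as a g-chamber and appealing to Proposition~\ref{prop:cellinNef}, whereas the paper concludes more directly that the neat embedding of $X$ into the projective $Z$ makes $X$ itself projective, hence a MDS; both endgames are valid.
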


Notice that the Oda's example recalled in Remark~\ref{rem:polar_noMDS} is then a sharp counterexample \wrt an extension of Corollary~\ref{cor:KSforwMDS} for higher values of the Picard number $r$: in fact it is a smoothly fillable wMDS with $r=4$ and non-projective.

\section{Birational geometry of complete weak Mori Dream Spaces}\label{sez:birazionale}

In the present section, a wMDS is considered by the birational point of view, aiming to a possible extension, beyond the projective setup, of Hu-Keel results about the termination of a MMP for every divisor and the classification of rational contractions \cite[Prop.~1.11,~2.11]{Hu-Keel}. As a direct consequence of \cite[Thm.~4.3.3.1]{ADHL}, here recalled by the following Lemma~\ref{lem:sQm}, these results extend quite naturally to a complete wMDS.

\begin{definition}[Small $\Q$-factorial modification]\label{def:sQm}
  A birational map $f:X\dashrightarrow Y$, between irreducible, complete and $\Q$-factorial algebraic varieties, is called a \emph{small $\Q$-factorial modification} (s$\Q$m) if it is biregular in codimension 1 i.e. there exist Zariski open subsets $U\subseteq X$ and $V\subseteq Y$ such that $f|_U:U\stackrel{\cong}{\rightarrow} V$ is biregular and $\codim (X\setminus U)\geq 2$\,, $\codim(Y\setminus V)\geq 2$\,.
\end{definition}

\begin{lemma}[\cite{ADHL}, Thm.~4.3.3.1]\label{lem:sQm}
A $\Q$-factorial and complete algebraic variety $X$ is a wMDS if and only if there exists a \sqm $f:X\dashrightarrow X'$ such that $X'$ is a MDS.
\end{lemma}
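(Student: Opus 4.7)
The plan is to treat each implication separately; the forward direction is the substantive one.

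\textbf{Backward direction ($\Leftarrow$).} Suppose $f : X \dashrightarrow X'$ is a small $\Q$-factorial modification with $X'$ a MDS. By hypothesis $X$ is already $\Q$-factorial and complete. Since $f$ is biregular in codimension $1$ and both $X, X'$ are normal, restriction to the common open subset induces an isomorphism $\Cl(X) \cong \Cl(X')$; moreover, for every Weil divisor $D$ the reflexive rank-one sheaves $\cO_X(D)$ and $\cO_{X'}(f_* D)$ have equal global sections, compatibly with multiplication. Hence $\Cox(X) \cong \Cox(X')$ is finitely generated, and $X$ is a wMDS.

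\textbf{Forward direction ($\Rightarrow$).} Assume $X$ is a complete wMDS; fix a Cox basis $\X$ and let $i : X \hookrightarrow W$ be the $\X$-canonical toric embedding of Theorem~\ref{thm:canonic-emb}, with fan matrix $V$ and weight matrix $Q = \G(V)$. Corollary~\ref{cor:Qpositive} guarantees that $V$ is an $F$-matrix and $Q$ a $W$-matrix, so $\Mov(Q) = \overline{\Mov}(W)$ is full-dimensional inside the strongly convex cone $\overline{\Eff}(W)$. Consequently its GKZ decomposition $\Ga(Q)|_{\Mov}$ contains at least one full-dimensional cone $\g$. By Proposition~\ref{prop:camera}, $\g$ is a geometric chamber, and Proposition~\ref{prop:quasiproiettivo} attaches to it a fan $\Si_\g \in \SF(V)$ whose associated toric variety $Z_\g := Z(\Si_\g)$ is $\Q$-factorial, complete (because $|\Si_\g| = \langle V \rangle = \R^n$, $V$ being an $F$-matrix) and projective, with $\Nef(Z_\g) = \g$.

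Now imitate the construction of Theorem~\ref{thm:complete-emb} with $\Si_\g$ playing the role of a filling fan, but without assuming $\g \subseteq \Nef(W)$. Set $\widehat{Z} := \overline{W} \setminus \widetilde{B}$ with $\widetilde{B} = \mathcal{V}(\irr(Z_\g))$, define $\widehat{X}' := \overline{X} \cap \widehat{Z}$ and $X' := \widehat{X}'/G$. Since $\overline{X}$ is $G$-stable and closed in $\overline{W}$ and the relations presenting it are unchanged, one obtains a neat closed embedding $X' \hookrightarrow Z_\g$; hence $X'$ is projective; an isomorphism $\Cox(X') \cong \Cox(X)$, so $\Cox(X')$ is finitely generated; and the $\Q$-factoriality of $X'$ transfers from that of $Z_\g$ through the neat embedding, just as in Corollary~\ref{cor:WèQ-f}. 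Therefore $X'$ is a MDS. It remains to check that the natural birational map $f : X \dashrightarrow X'$ is a s$\Q$m: both $X$ and $X'$ are geometric $G$-quotients of open subsets $\widehat{X}, \widehat{X}' \subseteq \overline{X}$ whose complements have codimension $\geq 2$ (by the $1$-freeness recalled in Theorem~\ref{thm:K-emb}, applied to $X$ and to the ambient $\widehat{Z}$); hence $\widehat{X} \cap \widehat{X}'$ is $G$-stable with complement of codimension $\geq 2$ in $\overline{X}$, and its image in $X$ and in $X'$ provides open subsets, with complements of codimension $\geq 2$, on which $f$ restricts to an isomorphism.

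\textbf{Main obstacle.} The technical core is the verification that $\widehat{X}' \to X'$ is indeed a geometric quotient and that $X'$ inherits $\Q$-factoriality from $Z_\g$; these transfer the 1-free, codimension-controlled $G$-stability from the ambient toric Cox construction to the closed subscheme $\overline{X} \subseteq \overline{W}$, along the lines of the proof of Theorem~\ref{thm:complete-emb} but applied to an arbitrary full-dimensional chamber of $\Mov(Q)$ instead of to a filling cell inside $\Nef(W)$. The crucial point is precisely that passing to a different GIT chamber does not alter the defining ideal $I$ of $\overline{X}$, so the total coordinate space is the same and the failure of $X$ to be projective is absorbed into the s$\Q$m comparing the two quotients.
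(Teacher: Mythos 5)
Your proposal is correct and follows essentially the same route as the paper's own (sketch of) proof: the backward direction via the isomorphisms $\Cl(X)\cong\Cl(X')$ and $\Cox(X)\cong\Cox(X')$ induced by an isomorphism in codimension $1$, and the forward direction by choosing a full-dimensional GKZ chamber in $\Mov(Q)$, passing to the associated projective toric variety $Z_\g$, and comparing the two GIT quotients of the common total coordinate space $\overline{X}$. The only cosmetic difference is that the paper selects the chamber $\g'$ with $\Nef(X)\preceq\g'$ while you take an arbitrary full-dimensional chamber of $\Mov(Q)$, which is immaterial for the existence statement; both arguments defer the same codimension-two verification to \cite[Thm.~4.3.3.1]{ADHL}.
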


\begin{proof}[Sketch of proof] The necessary condition is trivial. In fact, let  $f:X\dashrightarrow X'$ be a \sqm with $X'$ given by a MDS.
  Then $f$ induces isomorphisms $\Cl(X)\cong \Cl(X')$ and $\Cox(X)\cong\Cox(X')$, as $f$ is an isomorphism in codimension 1, so giving that $X$ has to be a wMDS.

The sufficient condition is precisely \cite[Thm.~4.3.3.1]{ADHL}: here just a rough idea, of how constructing the \sqm $f$, is given,  referring the interested reader to \cite[\S~4.3.3]{ADHL} and \cite[\S~15.4]{CLS}, for any further detail.

First of all recall that, given the $\X$-canonical toric embedding $i:X\hookrightarrow W$, the cone $\Mov(X)\cong\Mov(W)$ is a full dimensional convex cone inside $\Eff(X)\cong\Eff(W)$ and the support of the secondary fan $\Ga(Q)|_{\Mov(W)}$, being $Q$ a weight matrix of $W$. Then, there certainly exists  a full dimensional GKZ-cone $\g'$ such that $\g:=\Nef(X)\preceq\g'$. Since $X$ is complete, hypotheses of Prop.~\ref{prop:camera} are satisfied and $\g'$ turns out to be a g-chamber. Calling $V$ a Gale dual matrix of $Q$, let $\Si'\in\P\SF(V)$ be the fan associated with $\g'$ and $Z=Z(\Si')$ be the related $\Q$-factorial and projective toric variety. Then, recalling notation introduced in Rem.~\ref{rems:wMDS}~(4),
\begin{equation*}
    \overline{Z}\cong\Spec\K[\X]\cong\overline{W}
\end{equation*}
and $\Cox(X)\cong\K[\X]/I$. In particular, $X=\widehat{X}/G$\,, where $G=\Hom(\Cl(X),\K^*)$ is the characteristic quasi-torus of $X$, and by setting
\begin{eqnarray*}
\irr(X')&:=&\pi_\X(\irr(Z))=\irr(Z)/\irr(Z)\cap I\\
B_{X'}&:=&\mathcal{V}(\irr(X'))\\
\widehat{X}'&:=&\overline{X}\setminus B_{X'}\\
X'&:=&\widehat{X'}/G
\end{eqnarray*}
one gets the definition of $f$ by the following diagram
\begin{equation*}
    \xymatrix{&\widehat{X}\cap \widehat{X}'\ar[dl]_-{p_{X}|_{\widehat{X}\cap \widehat{X}'}}\ar[dr]^-{p_{X'}|_{\widehat{X}\cap \widehat{X}'}}\\
                X\ar@{-->}^f[rr]&&X'}
\end{equation*}
 that is $f(x):=p_{X'}(p_{X}^{-1}(x))$, for every $x\in p_{X}(\widehat{X}\cap \widehat{X}')$. It remains to check that:
 \begin{itemize}
     \item $X'$ is a well defined MDS,
     \item $p_{X}(\widehat{X}\cap \widehat{X}')$ and $ p_{X'}(\widehat{X}\cap \widehat{X}')$ are open Zariski subsets of $X$ and $X'$, respectively, whose complementary sets have both codimension at least 2\,,
     \item $f:p_{X}(\widehat{X}\cap \widehat{X}')\cong p_{X'}(\widehat{X}\cap \widehat{X}')$\,, so giving an isomorphism in codimension 1 between $X$ and $X'$\,.
 \end{itemize}
For all the necessary details, the interested reader is referred to the extensive treatments cited above.
\end{proof}

\subsection{The Minimal Model Program (MMP)}\label{ssez:MMP}

Mori Dream Spaces have been introduced to give a class of varieties on which a MMP can be carried out for any divisor \cite[Prop.~1.11\,(1)]{Hu-Keel}. By Lemma~\ref{lem:sQm}, this fact extends immediately to complete weak Mori Dream Spaces.

Let us recall some standard notation.

\begin{definition}[MMP for a divisor]\label{def:MMP}
  Let $X$ be a $\Q$-factorial and complete algebraic variety and consider $D\in\Div(X)$. A MMP for $D$, or $D$-MMP, is a finite sequence of birational transformations
  \begin{equation*}
    \xymatrix{X=:X_0\ar@{-->}[r]^-{f_1}&X_1\ar@{-->}[r]^-{f_2}&X_2\ar@{-->}[r]^-{f_3}&
    \cdots\ar@{-->}[r]^-{f_{l-1}}&X_{l-1}\ar@{-->}[r]^-{f_l}&X_l=:X^*}
  \end{equation*}
  such that:
  \begin{enumerate}
    \item $X_i$ is a $\Q$-factorial and complete algebraic variety,  for every $0<i\leq l$;
    \item setting $D_0=D$ and $D_i:=(f_i)_*D_{i-1}:=\overline{f_i(D_i)}\in\Div(X_i)$ be the birational transform of $D_{i-1}\in\Div(X_{i-1})$, the birational map $f_i$ is a $D_{i-1}$-negative contraction (i.e. $D_{i-1}\cdot C<0$ for every complete and irreducible curve $C\subset X_{i-1}$ such that $f_{i-1}(C)$ is a point); in particular $f_i$ is either a divisorial contraction or a flip of a $D_{i-1}$-negative small contraction, for every $0<i\leq l$;
    \item either $D^*:=D_l$ is nef and $X^*:=X_l$ is called a \emph{$D$-minimal model of $X$}, or $X^*$ admits a $D^*$-negative contraction over a lower dimensional variety, giving rise to a Mori fibration structure for $X^*$;
    \item if $l=0$ then only one of the following occurs:
    \begin{itemize}
      \item either $D$ is nef and $X=X^*$ is already a $D$-minimal model,
      \item or $X$ admits a $D$-negative contraction over a lower dimensional variety, giving rise to a Mori fibration structure for $X$.
    \end{itemize}
    \end{enumerate}
\end{definition}

We are then in a position to state the following

\begin{theorem}\label{thm:MMP}
  Let $X$ be a complete wMDS and consider a divisor $D\in \Div(X)$. Then there exists a MMP for $D$ and
  \begin{enumerate}
    \item $X^*$ is a $D$-minimal model, if and only if $[D]\in\overline{\Eff}(X)$; in this case $D^*$ is \emph{semiample} (i.e. $|mD^*|$ is base-point-free for $m\gg 0$);
    \item $X^*$ is a \sqm of $X$ if and only if $[D]\in\overline{\Mov}(X)$.
  \end{enumerate}
  This $D$-MMP is not unique and, if $D$ is not nef, the terminal model $X^*$ can always be assumed to be a MDS.
\end{theorem}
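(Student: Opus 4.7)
The strategy is to invoke Lemma~\ref{lem:sQm} to reduce to Hu-Keel's MMP for a MDS \cite[Prop.~1.11(1)]{Hu-Keel}. By that lemma, pick a sQm $\vf\colon X\dashrightarrow X'$ with $X'$ a MDS. Since $\vf$ is biregular in codimension~$1$, it induces canonical isomorphisms $\Cl(X)\cong\Cl(X')$ and $\Cox(X)\cong\Cox(X')$, hence compatible identifications of the pseudo-effective and movable cones and of the GKZ-decomposition supported on them. Setting $D':=\vf_{\ast}D$, the classes $[D]$ and $[D']$ correspond under $N^1(X)\cong N^1(X')$; the only piece of structure that differs between the two sides is which GKZ-chamber is singled out as the $\Nef$ cone.

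I would then apply Hu-Keel's MMP to the pair $(X',D')$: this yields a finite chain of $\Q$-factorial complete varieties connected by $D'_i$-negative flips and divisorial contractions, terminating either in a $D'$-minimal model with $(D')^*$ semiample --- which occurs precisely when $[D']\in\overline{\Eff}(X')$, since on a MDS the nef cone coincides with the semiample cone --- or in a Mori fibration otherwise; moreover $(X')^*$ is a sQm of $X'$ iff $[D']\in\overline{\Mov}(X')$. To transport this into a MMP starting at $X$ rather than at $X'$, I would rerun the underlying ``chamber walk'' from $\Nef(X)$ towards $[D]$ inside the common GKZ-decomposition of $\overline{\Eff}(X)$: wall crossings within $\overline{\Mov}$ are $D_i$-negative flips and wall crossings from $\overline{\Mov}$ into $\overline{\Eff}\setminus\overline{\Mov}$ are $D_i$-negative divisorial contractions, by the same combinatorics used in the MDS case. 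Each intermediate model is a $\Q$-factorial complete sQm of $X$ (and hence again a wMDS, by composing sQms with $\vf$ and invoking Lemma~\ref{lem:sQm}), and the process terminates by finiteness of chambers.

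The criteria (1) and (2), and the semiampleness of $D^*$ when it is nef, then transfer verbatim from Hu-Keel via the isomorphism $N^1(X)\cong N^1(X')$. Non-uniqueness of the MMP reflects the freedom in ordering wall crossings when $[D]$ lies on a face of positive codimension of the GKZ-decomposition. For the last clause, if $D$ is not nef the walk must traverse at least one full-dimensional GKZ-cell, and one can steer it so that the terminal model $X^*$ is attached to a g-chamber of $\overline{\Mov}(X)$; Proposition~\ref{prop:cellinNef} then guarantees that such an $X^*$ is a MDS. The main technical obstacle I expect in carrying out this plan rigorously is verifying that each wall crossing in the walk on $X$ is genuinely a $D_i$-negative extremal flip or divisorial contraction between $\Q$-factorial complete varieties, with the birational transform of $D_i$ behaving as prescribed; this reduces to a standard toric/GKZ computation in the style of \cite[\S~15.4]{CLS}, transferable to the wMDS setting through the neat toric embedding of Theorem~\ref{thm:canonic-emb}.
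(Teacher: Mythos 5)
Your strategy coincides with the paper's: both proofs dispose of the nef case immediately, then use Lemma~\ref{lem:sQm} to produce a s$\Q$m $f:X\dashrightarrow X'$ onto a MDS $X'$, invoke \cite[Prop.~1.11]{Hu-Keel} for $(X',D':=f_*D)$, and transfer criteria (1)--(2) and the semiampleness of $D^*$ through the identifications $\overline{\Eff}(X)\cong\overline{\Eff}(X')$ and $\overline{\Mov}(X)\cong\overline{\Mov}(X')$ coming from Theorem~\ref{thm:complete-emb}. Where you diverge is in how the Hu--Keel output is converted into a MMP that \emph{starts at} $X$: you propose to re-execute the whole chamber walk from $\Nef(X)$, and you correctly single out as the main obstacle the first step out of $\Nef(X)$, which for non-projective $X$ is a g-cell of positive codimension in $\overline{\Mov}(X)$ rather than a wall between adjacent full-dimensional chambers, so the ``same combinatorics as the MDS case'' does not literally apply there. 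The paper avoids the re-walk altogether: it declares the s$\Q$m $f$ itself to be the first step of the $D$-MMP, arguing it is $D$-negative because $f$ contracts precisely the curves lying in the dual face $\g^\vee\preceq\overline{NE}(X)$ of $\g=\Nef(X)\preceq\Nef(X')$ while $[D]\notin\g$, and then simply concatenates Hu--Keel's $D'$-MMP on $X'$; since that MMP never leaves the projective world, its terminal model is automatically a MDS, which gives the final clause without your appeal to Proposition~\ref{prop:cellinNef} (though that appeal is also valid, as $\Nef(X^*)$ is a single g-chamber). So the one point you leave as an acknowledged technical obstacle is exactly the point the paper settles in a single sentence, and everything downstream of it requires no transport back to $X$ at all; your converse implications in (1)--(2), via choosing a g-chamber $\g^*$ containing $[D]$ and taking the corresponding birational transform of $X'$, match the paper's argument.
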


\begin{proof} If $D$ is nef, then $X=X^*$ is already a minimal model. Then one can assume $D$ is not nef, that is $[D]\not\in\g=\Nef(X)$. Apply Lemma~\ref{lem:sQm}. Then one has a \sqm $f:X\dashrightarrow X'$ such that $X'$ is a MDS.
Let $D':=f_*D$ be the birational transform of $D$.
If $D'$ is nef, then $f|_X:X\dashrightarrow X'$ is a $D$-MMP and the MDS $X'$ is a $D$-minimal model of $X$. In fact, $f$ turns out to be $D$-negative as $f$ contracts precisely all the curves whose classes are in the face $\g^\vee\preceq\overline{NE}(X):=\Nef(X)^\vee$, which is the dual face of $\g\preceq\g':=\Nef(X')$\,. Every such curve is $D$-negative since $[D]\not\in \g$.

Then assume $D'$ is not nef and run a $D'$-MMP for $X'$, which exists by \cite[Prop.~1.11\,(1)]{Hu-Keel}. This gives a $D$-MMP with terminal model $X^*$ which is a MDS.

In particular $D^*$ is nef if and only if it is semiample. By \cite[Prop.~1.11\,(2)]{Hu-Keel} and Theorem~\ref{thm:complete-emb}\,(3), the birational map $g:X'\dashrightarrow X^*$ is induced by an inclusion $$\Nef(X^*)\stackrel{g^*}{\hookrightarrow}\overline{\Eff}(X')\stackrel{f^*}{\cong}\overline{\Eff}(X)$$
and it is a \sqm if and only if
$$\Nef(X^*)\stackrel{g^*}{\hookrightarrow}\overline{\Mov}(X')\stackrel{f^*}{\cong}\overline{\Mov}(X)\,.$$
The first inclusion proves that if $X^*$ is a $D$-minimal model then $[D]\in\overline{\Eff}(X)$ and the second one proves that if $X^*$ is also a \sqm of $X$ then $[D]\in\overline{\Mov}(X)$.

For the converse observe that if $[D]\in\overline{\Eff}(X)$ then there exists a g-chamber $\g^*\subseteq\overline{\Eff}(X)$ such that $[D]\in\g^*$. The choice of $\g^*$ determines a projective toric variety $Z^*(\g^*,\Si_{\g^*})$. On the other hand, $X'$ is projective, implying that $\g'=\Nef(X')$ is a g-chamber, whose choice determines the projective toric variety $Z(\g',\Si_{\g'})$ and a birational map $G:Z\dashrightarrow Z^*$. Then the birational transform $X^*:=G_*(X')$ gives a $D$-minimal model of $X$, so ending up the proof of (1). Moreover, if $[D]\in\overline{\Mov}(X)$ then we can choose $\g^*\subseteq\overline{\Mov}(X)$, meaning that $g:=G|_{X'}:X'\dashrightarrow X^*$ is a \sqm and proving (2).
\end{proof}

\subsection{Rational contraction of a complete wMDS}\label{ssez:contrraz}
 Here the goal is proposing an extension, to the non-projective setup, of Hu-Keel results \cite[Prop.~1.11\,(3) and Prop.~2.11\,(4)]{Hu-Keel}. The key ingredient is still Lemma~\ref{lem:sQm}.

 Recalling \cite[Def.~1.0]{Hu-Keel} and \cite[Def.~2.1]{CasagrandeMDS} let us give the following

\begin{definition}[Rational contraction]\label{def:rat-contr}
A rational dominant map $f:X\dashrightarrow Y$ from a $\Q$-factorial and complete algebraic variety $X$ to a normal and complete algebraic variety $Y$ is called a \emph{rational contraction} if there exists a resolution of $f$, that is a morphism
$$\xymatrix{\phi:\widehat{\Ga}\ar[r]& \overline{\Ga}(f)\subseteq X\times Y}$$
on the closure of the graph $\Ga(f)$ of $f$, such that:
\begin{enumerate}
\item $\widehat{\Ga}$ is a smooth and complete algebraic variety,
\item the following diagram commutes
\begin{equation*}
  \xymatrix{\widehat{\Ga}\ar[ddr]_-\mu\ar[drr]^-{f'}\ar[dr]^>>>\phi&&\\
            &X\times Y\ar[d]^-p\ar[r]^q&Y\\
            &X\ar@{-->}[ur]^-f&}
\end{equation*}
  \item $\mu$ is a birational morphism,
  \item $f'$ has connected fibers,
  \item every $\mu$-exceptional divisor $E\subset\widehat{\Ga}$ is an $f'$-exceptional divisor.
\end{enumerate}
In particular, if $f$ is birational then it is called a \emph{birational contraction}. The latter happens if and only if $f'$ is a birational morphism. Otherwise $\dim Y<\dim X$ and $f$ is called \emph{of fiber type}.
\end{definition}

\begin{remark}
  Let $f:X\dashrightarrow Y$ be a birational map, with $X$ $\Q$-factorial and complete and $Y$ normal and complete. Then $f$ is a birational contraction if and only if there exist open Zariski subsets $U\subseteq X$ and $V\subseteq Y$ such that $f|_U$ is an isomorphism between $U$ and $V$ and $\codim(Y\setminus V)\geq 2$ (this fact follows immediately by item (5) in the Definition~\ref{def:rat-contr}: see \cite[Rem.~2.2]{CasagrandeMDS} for the details).

  Consequently, a birational map $g:X\dashrightarrow X'$ between $\Q$-factorial and complete algebraic varieties is a \sqm if and only both $g$ and $g^{-1}$ are birational contractions.
\end{remark}

\begin{theorem}
Let $X$ be a complete wMDS with Cox basis $\X$ and $i:X\hookrightarrow W$ its $\X$-canonical toric embedding. Then:
\begin{enumerate}
\item up to isomorphisms, $X$ admits a finite number $s>0$ of distinct \sqm
$$g_i:X\dashrightarrow X_i\quad ,\quad i=1,\ldots,s$$
one of which is given by the identity $id_X$;
\item $\overline{\Mov}(X)=\bigcup_{i=1}^s g_i^*(\Nef(X_i))$\,;
\item the collection of convex cones given by
$$\mathcal{M}_X:=\{\s\,|\,\s\ \text{is a face of}\ g_i^*(\Nef(X_i))\,,\ i=1,\ldots,s \}$$
is a fan whose support is $|\mathcal{M}_X|=\overline{\Mov(X)}$\,;
\item up to isomorphisms, $X$ admits a finite number of rational contractions which are in $1:1$ correspondence with the cones of the fan $\mathcal{M}_X$, via the association:
$$\xymatrix{(f:X\dashrightarrow Y)\ar@{|->}[r]& f^*\Nef(Y)}$$
\item for every rational contraction $f:X\dashrightarrow Y$ there exists a toric rational contraction $\tilde{f}:W\dashrightarrow Y$, such that $f=\tilde{f}|_X$\,.
\end{enumerate}
\end{theorem}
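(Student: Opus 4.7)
My strategy is to reduce to the Mori dream space case via Lemma~\ref{lem:sQm} and then transfer Hu-Keel's analogous statements (Prop.~1.11(2)-(3) and Prop.~2.11(4) in \cite{Hu-Keel}) back along the \sqm. Fix a \sqm $f:X\dashrightarrow X'$ to a MDS $X'$, provided by Lemma~\ref{lem:sQm}. Since compositions and inverses of \sqm's are again \sqm's, the \sqm-equivalence classes of $X$ and $X'$ coincide; moreover $f$ induces an isomorphism $f^{*}:\Cl(X')\stackrel{\cong}{\longrightarrow}\Cl(X)$ that extends to identify $\overline{\Eff}$, $\overline{\Mov}$ and sends $\Nef(X')$ (a g-chamber of the secondary fan $\Ga(Q)$) to the g-cell $\Nef(X)\subseteq\overline{\Mov}(X)$, via Theorem~\ref{thm:complete-emb}(3) and Proposition~\ref{prop:Qfactorial} applied to the common Cox ring (the canonical ambient toric varieties of $X$ and $X'$ share the same fan and weight matrices).

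For items (1)--(3), apply Hu-Keel to the MDS $X'$: one gets a finite family of \sqm's $h_{j}:X'\dashrightarrow Y_{j}$ to MDS's $Y_{j}$, in bijection with the g-chambers of $\overline{\Mov}(X')\cong\overline{\Mov}(X)$, together with $\overline{\Mov}(X')=\bigcup_{j}h_{j}^{*}(\Nef(Y_{j}))$ and the induced chamber fan structure. Setting $g_{j}:=h_{j}\circ f:X\dashrightarrow X_{j}:=Y_{j}$ and then adjoining $X$ itself to the list (it corresponds to the \sqm $f^{-1}:X'\dashrightarrow X$, which lies in the \sqm-class of $X'$ but need not appear in Hu-Keel's enumeration when $X$ is not a MDS) yields (1) with $id_{X}$ included. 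Item (2) follows from $f^{*}\overline{\Mov}(X')=\overline{\Mov}(X)$. For (3), each $g_{j}^{*}(\Nef(X_{j}))$ is a g-cell of $\Ga(Q)|_{\Mov(Q)}$ by Proposition~\ref{prop:nefècella} applied to the canonical toric ambient variety of $X_{j}$ (which shares the weight matrix $Q$ with $W$); since these cells cover $\overline{\Mov}(Q)$ by (2), the collection of all their faces forms a subfan of $\Ga(Q)|_{\Mov(Q)}$ with support $\overline{\Mov}(X)$, which is exactly $\mathcal{M}_{X}$.

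For (4), a rational contraction $f:X\dashrightarrow Y$ with $Y$ normal and complete determines the cone $f^{*}\Nef(Y)\subseteq\overline{\Mov}(X)$ via pullback of nef classes; conversely, every $\s\in\mathcal{M}_{X}$ lies as a face of a maximal cone $g_{i}^{*}(\Nef(X_{i}))$, and the corresponding face of $\Nef(X_{i})$ on the MDS $X_{i}$ yields, by Hu-Keel, a rational contraction $\varphi_{\s}:X_{i}\dashrightarrow Y_{\s}$, so that $\varphi_{\s}\circ g_{i}:X\dashrightarrow Y_{\s}$ is the required map. Well-definedness up to iso of target, independent of the ambient chamber chosen, follows from the face-compatibility of the GKZ fan. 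For (5), the toric extension $\widetilde f$ is produced by the Gale-dual construction underlying Theorem~\ref{thm:complete-emb}: picking a filling fan $\Si'\in\SF(V)$ associated with a g-chamber containing $\s$ gives a sharp partial completion $Z(\Si')$ of $W$ carrying a toric rational contraction $Z(\Si')\dashrightarrow Y$ that restricts to $f$ via diagram~(\ref{embediag}); precomposition with the open embedding $W\hookrightarrow Z(\Si')$ gives the toric $\widetilde f:W\dashrightarrow Y$.

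\textbf{Main obstacle.} The delicate point is item (5) in the case when the canonical ambient $W$ fails to be complete (cf.~Example~\ref{ex:noncompletabile}): no a priori sharp global completion of $W$ exists unless $\Nef(X)$ contains a filling cell, so Theorem~\ref{thm:complete-emb} cannot be directly invoked. The plan to circumvent this is to construct $\widetilde f$ at the level of characteristic spaces: the face $\s$ prescribes an open subset $\widehat W_{\s}\subseteq\overline{W}$ via its associated irrelevant ideal (as in the proof of Lemma~\ref{lem:sQm}), giving a toric quotient $W_{\s}:=\widehat W_{\s}/G$, and $\widetilde f$ arises as the composite toric rational map $W\dashrightarrow W_{\s}\to Y$, with $W_{\s}\to Y$ extending $\varphi_{\s}\circ g_{i}$ from $X$ to the whole toric quotient, thereby avoiding any global completion of $W$.
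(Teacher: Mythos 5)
Your proposal follows essentially the same route as the paper: reduce to a MDS $X'$ via the small $\Q$-factorial modification supplied by Lemma~\ref{lem:sQm}, transfer Hu--Keel's Prop.~1.11(2)--(3) and Prop.~2.11(4) back to $X$ by composing with that modification, and obtain the toric extension in item (5) through the shared total coordinate space. The completion worry you raise for item (5) is not an obstacle in the paper's argument either: no sharp completion of $W$ is ever needed, only the toric small modification $\tilde g:W\dashrightarrow W'$ onto the canonical ambient toric variety of the MDS $X'$ (both being $G$-quotients of open subsets of the same $\overline W$), which is precisely the characteristic-space construction you describe as your workaround.
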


\begin{proof}
By Lemma \ref{lem:sQm} there exists a \sqm $g:X\dashrightarrow X'$ such that $X'$ is a MDS. In particular, if $Q$ is a weight matrix of $W$,
\begin{equation*}
 \Mov(Q)=\overline{\Mov}(W)\stackrel{i^*}{\cong}\overline{\Mov(X)}\stackrel{(g^{-1})^*}
 {\cong}\overline{\Mov}(X')\,.
\end{equation*}
Items from (1) to (5) hold for $X'$ by Prop.~2.9 and Def.~1.10\,(3), Prop.~1.11\,(2)-(3), Prop.~2.11\,(4) in \cite{Hu-Keel}. Moreover there is a bijection between rational contractions on $X'$ and rational contraction on $X$ by associating
\begin{equation*}
\xymatrix{(f':X'\dashrightarrow Y)\ar@{|->}[r]&(f'\circ g:X\dashrightarrow Y)}
\end{equation*}
Clearly such a $1:1$ correspondence sends a \sqm in a \sqm. This suffices to prove item from (1) to (4).

For (5), let $W'$ be the canonical ambient toric variety of $X'$. By \cite[Prop.~2.11\,(4)]{Hu-Keel} the \sqm $g$ is induced by a \sqm $\tilde{g}:W\dashrightarrow W'$ such that $\tilde{g}|_X=g$. Then the rational contraction
$$f=f'\circ g= (\tilde{f'}\circ\tilde{g})|_X:X\dashrightarrow Y$$
is the restriction to $X$ of the toric rational contraction $\tilde{f'}\circ\tilde{g}:W\dashrightarrow Y$\,.
\end{proof}

\section{Weak Mori Dream Spaces and weak Fano varieties}\label{sez:weakFano}
In the following an irreducible, normal and complete algebraic variety $X$ will be called \emph{(weak) $\Q$-Fano} if it is $\Q$-Gorenstein and there exists an integer $k>0$ such that $-kK_X$ is an ample (resp. nef and big) Cartier divisor (recall that a \emph{big} divisor is one admitting maximal Iitaka dimension, see e.g. \cite[Def.~2.2.1]{Lazarsfeld}). As a consequence of a well known result of Birkar, Cascini, Hacon and McKernan \cite[\S~1.3]{BCHMcK} a $\Q$-factorial $\Q$-Fano variety turns out to be a MDS. It seems then natural asking if a similar result extends to a weak $\Q$-Fano variety, implying that it is also a wMDS. Unfortunately the latter cannot be established in such a generality, but one has to impose the existence of a projective \sqm.

\begin{definition}[log pair, see \cite{KM}]\label{def:log}
    Let $X$ be an irreducible and  normal algebraic variety. An effective $\Q$-divisor $\De$ such that $K_X+\De$ is $\Q$-Cartier is called a \emph{boundary divisor} of $X$ and the pair $(X,\De)$ is called a \emph{log pair}.
  \end{definition}

   Let $(X,\De)$ be a log pair and $f:Y\longrightarrow X$ be a birational morphism from an irreducible, normal, algebraic variety $Y$. If $\De=\sum_i a_i D_i$, as a sum of prime divisors with rational coefficients, then its birational transform by $f$ is given by
   $$f_*^{-1}\De=\sum_ia_if_*^{-1}D_i$$
   where $f^{-1}_*\De,f^{-1}_*D_i$ denote the birational transforms of $\De$ and $D_i$, respectively (following notation in Koll\'{a}r-Mori \cite[(10) and (11), p.5]{KM}).
   Let $m$ be the Cartier index of $K_X+\De$ (i.e. the least positive integer $m$ such that $m(K_X+\De)$ is Cartier). Then, for every irreducible exceptional divisor $E_j\subseteq\Exc(f)$ there exists a rational number $a(E_j,X,\De)$ such that
   \begin{equation}\label{discrepanza}
     m(K_Y+f_*^{-1}\De)\sim f^*(m(K_X+\De))+m\sum_ja(E_j,X,\De)E_j\,.
   \end{equation}
Set $a(f_*^{-1}D_i,X,\De):=-a_i$ and $a(E,X,\De)=0$ for any further prime divisor not contained in $\Exc(f)\cup\Supp(f_*^{-1}\De)$. Then (\ref{discrepanza}) can be rewritten as follows:
\begin{equation}\label{discrepanza2}
  mK_Y-f^*(m(K_X+\De))\sim m\sum_{E\,\text{prime}}a(E,X,\De)E= m A(X,\De)\,.
\end{equation}
The $\Q$-divisor $A(X,\De)$ is called the \emph{discrepancy divisor} and its coefficient $a(E,X,\De)$ is called the \emph{discrepancy of $E$}, with respect to the log pair $(X,\De)$ \cite[Def.~2.25]{KM}. This is well defined, as discrepancy $a(E,X,\De)$ depends only on the valuation $v(E,Y)$ and not on the particular choice of $f$ (see \cite[Rem.~2.23]{KM}).

\begin{definition}[Kawamata log terminal (Klt) singularities]\label{def:sings} Let $(X,\De)$ be a log pair and $A(X,\De)$ its discrepancy divisor. Let us denote by $\llcorner\,\ \lrcorner$ the integral part of a $\Q$-divisor, obtained by taking the integral parts of the coefficients. Then:
\begin{enumerate}
  \item $(X,\De)$ is a called a \emph{Kawamata log terminal (Klt)} pair if $\llcorner A(X,\De)\lrcorner\geq 0$ and $\llcorner\De\lrcorner =0$,
\end{enumerate}
In particular $X$ is said admitting \emph{Klt singularities} if there exists a boundary divisor $\De$ such that $(X,\De)$ is a Klt pair.\\
  \end{definition}


\begin{definition}[log (weak) Fano, see Def.~2.5 in \cite{PS} and \cite{GOST}]\label{def:Fano} A Klt log pair $(X,\De)$ with $X$ complete is called \emph{log Fano} (resp. \emph{log weak Fano}) if a suitable multiple of $-(K_X+\De)$ is ample (resp. big and nef). Moreover $X$ is called of \emph{Fano type} if it admits a boundary divisor $\De$ such that $(X,\De)$ is a log Fano pair.
\end{definition}
Notice that if $(X,\De)$ is a log Fano pair then $X$ is necessarily a projective variety, but this is no longer true if $(X,\De)$ is a log weak Fano pair.

\begin{proposition}[Lemma-Definition 2.6 in \cite{PS}]\label{prop:wlog->log}
  If $X$ is a normal projective variety admitting a boundary divisor $\De$ such that $(X,\De)$ is a log weak Fano pair then $X$ is of Fano type.
\end{proposition}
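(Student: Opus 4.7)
The strategy is to perturb the boundary divisor $\Delta$ by a small effective $\Q$-divisor in order to turn the nef-and-big class $-(K_X+\Delta)$ into an ample one, while preserving the Klt condition. The key algebraic ingredient is Kodaira's lemma, which says that a big $\Q$-Cartier $\Q$-divisor on a normal projective variety is $\Q$-linearly equivalent to the sum of an ample $\Q$-divisor and an effective $\Q$-divisor.

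First I would apply Kodaira's lemma to the big $\Q$-Cartier divisor $-(K_X+\Delta)$, writing
\begin{equation*}
-(K_X+\Delta)\ \sim_{\Q}\ A+E
\end{equation*}
with $A$ an ample $\Q$-divisor and $E$ an effective $\Q$-divisor. For a rational parameter $\varepsilon\in(0,1)$, I would then consider the new boundary $\Delta_{\varepsilon}:=\Delta+\varepsilon E$ and rearrange:
\begin{equation*}
-(K_X+\Delta_{\varepsilon})\ =\ -(K_X+\Delta)-\varepsilon E\ \sim_{\Q}\ (1-\varepsilon)\bigl(-(K_X+\Delta)\bigr)+\varepsilon A.
\end{equation*}
Since $-(K_X+\Delta)$ is nef and $A$ is ample, the right-hand side is a strictly positive combination of a nef class and an ample class, hence ample for every $\varepsilon\in(0,1)$. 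This shows that $-(K_X+\Delta_{\varepsilon})$ is ample.

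The remaining step is to verify that $(X,\Delta_{\varepsilon})$ is still a Klt pair for $\varepsilon$ small enough. The plan here is to invoke the openness of the Klt condition: take a common log resolution $f:Y\to X$ of $(X,\Delta)$ and of $\Supp(E)$; then the discrepancy of every prime divisor $F$ on $Y$ with respect to $(X,\Delta_{\varepsilon})$ depends linearly on $\varepsilon$, namely
\begin{equation*}
a(F,X,\Delta_{\varepsilon})\ =\ a(F,X,\Delta)-\varepsilon\cdot\mathrm{mult}_{F}(f^{*}E).
\end{equation*}
Since there are only finitely many prime divisors involved (the $f$-exceptional ones together with the strict transforms of components of $\Delta$ and $E$), and $a(F,X,\Delta)>-1$ for all of them by the Klt hypothesis on $(X,\Delta)$, one can choose $\varepsilon>0$ sufficiently small so that $a(F,X,\Delta_{\varepsilon})>-1$ for every such $F$ and simultaneously $\lfloor\Delta_{\varepsilon}\rfloor=0$. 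For such an $\varepsilon$, the pair $(X,\Delta_{\varepsilon})$ is Klt with $-(K_X+\Delta_{\varepsilon})$ ample, i.e.\ a log Fano pair, which by Definition~\ref{def:Fano} shows that $X$ is of Fano type.

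The main obstacle is not really conceptual but rather bookkeeping: one must be careful that the finite set of discrepancies controlled on a single log resolution is enough (the coefficients may jump only along the finitely many components on $Y$ that appear in $f^{*}\Delta$ or $f^{*}E$), and that the rational rearrangement above is legitimate as a relation in $N^{1}(X)_{\Q}$. Both are standard, and the upgrade from nef-and-big to ample is entirely driven by the Kodaira-lemma splitting combined with the convex-combination trick.
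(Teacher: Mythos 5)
Your argument is correct: the Kodaira-lemma decomposition $-(K_X+\Delta)\sim_{\Q}A+E$, the convex-combination upgrade from nef-and-big to ample, and the openness of the Klt condition on a fixed log resolution of $(X,\Delta+E)$ together give exactly the standard proof of this statement. The paper itself supplies no proof here --- it quotes the result verbatim as Lemma--Definition~2.6 of Prokhorov--Shokurov \cite{PS} --- and your perturbation argument is precisely the one underlying that reference, so there is nothing to correct.
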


\subsection{Log weak Fano vs wMDS} As already mentioned above, \cite[Cor.~1.3.2]{BCHMcK} gua\-ran\-tees that
a $\Q$-factorial variety of Fano type is a MDS. This result has been later improved by the following result

\begin{theorem}[Theorem 1.1 in \cite{GOST}]\label{thm:FT<->MDS} A $\Q$-factorial variety $X$ is of Fano type if and only $X$ is a MDS and the total coordinate space $\overline{X}$ has at worst Klt singularities.
  \end{theorem}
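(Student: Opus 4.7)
The plan is to prove the two implications separately, exploiting the GIT-quotient structure $p_X: \widehat{X} \twoheadrightarrow X$ from the Cox construction (Theorem~\ref{thm:K-emb}) as the main bridge between geometric properties of $X$ and those of $\overline{X}$.

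For the ``only if'' direction, assume $(X,\De)$ is a Klt log Fano pair. That $X$ is a MDS is essentially the content of \cite[Cor.~1.3.2]{BCHMcK}: the Klt log Fano hypothesis implies finite generation of the section ring of any divisor, hence of $\Cox(X)$. To transfer Klt-ness from $X$ to $\overline{X}$, I would pull the boundary back along $p_X$ and extend across the codimension $\geq 2$ complement $B_X = \overline{X}\setminus\widehat{X}$ to obtain a boundary $\overline{\De}$ on $\overline{X}$. A comparison of canonical divisors under the quasi-torus quotient (analogous to the ramification formula, and coherent with the natural $\Cl(X)$-grading of $\omega_{\Cox(X)}$) then gives $K_{\overline{X}} + \overline{\De} \sim_\Q p_X^\#(K_X+\De)$ in a suitable sense, so that discrepancies compare faithfully. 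Since $(X,\De)$ is Klt, so is $(\overline{X},\overline{\De})$, and discarding the effective boundary $\overline{\De}$ preserves Klt-ness.

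For the ``if'' direction, assume $X$ is a MDS with $\overline{X}$ at worst Klt. In particular $X$ is projective, so by Proposition~\ref{prop:cellinNef}, $\Nef(X)\cong\Nef(W)$ is a union of g-chambers, and we may choose an ample class $H\in\Relint\Nef(X)$. The task is to construct an effective $\Q$-boundary $\De$ with $-(K_X+\De)$ ample and $(X,\De)$ Klt. Working via the canonical toric embedding $i:X\hookrightarrow W$ of Theorem~\ref{thm:canonic-emb} and writing divisors in Cox coordinates as $D_j:=i^{-1}(D_{\rho_j})$, one looks for $\De = \sum_j a_j D_j$ with $a_j\in[0,1)\cap\Q$ such that $-K_X-\De\sim_\Q H$. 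Existence of such coefficients is a linear-algebraic question controlled by the weight matrix $Q$ (diagram (\ref{div-diagram-embedding})) and the placement of $H$ inside $\overline{\Eff}(X)$; it is solvable provided the cone of Klt-admissible boundaries meets the line segment from $-K_X$ to $H$, which is where the Klt hypothesis on $\overline{X}$ enters.

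The main obstacle will be the reverse direction, and specifically the Klt-ness of the constructed pair $(X,\De)$. This reduces to comparing log discrepancies on $X$ and on $\overline{X}$ via a chosen simultaneous log resolution compatible with the quasi-torus action; the key technical input is that, for effective $G$-invariant $\widehat{\De}$ on $\overline{X}$ descending to $\De$ on $X$, the relation between discrepancies of $(X,\De)$ and of $(\overline{X},\widehat{\De})$ is controlled by the difference $K_{\overline{X}} - p_X^\#K_X$ (which is trivial on $\widehat{X}$ but may have support on $B_X$). Combined with the Klt assumption on $\overline{X}$ and a careful choice of coefficients $a_j$ strictly less than $1$, this comparison delivers Klt-ness of $(X,\De)$ and completes the proof.
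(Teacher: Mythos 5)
The first thing to say is that the paper does not prove this statement at all: it is Theorem~1.1 of Gongyo--Okawa--Sannai--Takagi, quoted verbatim as an external input and used only in Corollary~\ref{cor:proj&wF->MDS} and Corollary~\ref{cor:wMDS}. So there is no internal proof to compare yours with, and your sketch has to be judged on its own terms; on those terms it has a genuine gap.

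The decisive problem is in the ``only if'' direction, at the step where you pull the boundary back along $p_X$, ``extend across the codimension $\geq 2$ complement $B_X$'', and conclude that Klt-ness of $(X,\De)$ gives Klt-ness of $(\overline{X},\overline{\De})$. The discrepancy comparison you invoke is only valid over $\widehat{X}=\overline{X}\setminus B_X$, where $p_X$ is a $1$-free geometric quotient; it says nothing about divisors over $\overline{X}$ whose center lies inside the irrelevant locus $B_X$, and Klt-ness is not a condition that can be checked in codimension one or extended across a closed subset of codimension $\geq 2$. The singularities of $\overline{X}$ along $B_X$ are precisely the content of the theorem, and they are governed by the positivity of $-(K_X+\De)$, not by the Klt-ness of the pair alone. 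Your argument for this step nowhere uses that $-(K_X+\De)$ is ample, so it would equally show that the total coordinate space of \emph{any} Klt MDS is Klt; this is false. A K3 surface of Picard number one is a smooth MDS whose total coordinate space is the affine cone over the K3 in its polarized embedding, and that cone is log canonical but not Klt: the cone over a polarized pair is Klt exactly when the pair is log Fano and the polarization is $\Q$-proportional to $-(K+\De)$ with positive ratio. The correct argument (Brown; GOST) is essentially such a cone computation at $B_X$ in which the log Fano hypothesis enters in an essential and quantitative way.

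The ``if'' direction is also thinner than you present it. Producing $\De=\sum_j a_jD_j$ with $-(K_X+\De)\sim_\Q H$ ample is indeed a weight-matrix computation, but the real issue is certifying that the resulting pair is Klt, and ``a careful choice of coefficients $a_j$ strictly less than $1$'' does not explain where the hypothesis on $\overline{X}$ is consumed. The route in GOST is different: since $\Cox(X)$ is factorial (or factorially graded), $K_{\overline{X}}$ is trivial, so the hypothesis makes $\overline{X}$ an affine Klt variety of Calabi--Yau type; this structure is then descended through the quasi-torus quotient using results on quotients and images of varieties of Fano and Calabi--Yau type. Both implications hinge on what happens at the irrelevant locus $B_X$, and the sketch does not engage with that locus.
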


  This result, joint with the previous Proposition~\ref{prop:wlog->log}, gives the following

\begin{corollary}\label{cor:proj&wF->MDS}
  A $\Q$-factorial and projective variety $X$ admitting a boundary divisor $\De$ such that $(X,\De)$ is a log weak Fano pair is a MDS whose total space $\overline{X}$ has at worst Klt singularities. In particular a $\Q$-factorial and projective weak $\Q$-Fano variety is a MDS whose total space has at worst Klt singularities.
\end{corollary}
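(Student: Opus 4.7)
The proof is essentially a straightforward combination of the two results just stated. The plan is to first invoke Proposition~\ref{prop:wlog->log} to upgrade the log weak Fano structure to a log Fano structure (possibly with a different boundary), and then invoke Theorem~\ref{thm:FT<->MDS} to deduce both the MDS property and the bound on the singularities of $\overline{X}$ simultaneously.

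More precisely, let $(X,\Delta)$ be the given log weak Fano pair, with $X$ $\Q$-factorial and projective. Since $X$ is normal and projective, Proposition~\ref{prop:wlog->log} (Lemma--Definition~2.6 in \cite{PS}) provides a possibly different boundary divisor $\Delta'$ such that $(X,\Delta')$ is a log Fano pair; in other words, $X$ is of Fano type in the sense of Definition~\ref{def:Fano}. Now $X$ is $\Q$-factorial by hypothesis, so Theorem~\ref{thm:FT<->MDS} applies and yields at once that $X$ is a MDS and that the total coordinate space $\overline{X}=\Spec(\Cox(X))$ has at worst Klt singularities. This proves the first assertion.

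For the ``in particular'' part, let $X$ be a $\Q$-factorial projective weak $\Q$-Fano variety. The plan is to check that the trivial choice $\Delta=0$ already produces a log weak Fano pair, so that the first part of the corollary applies. Indeed, by Definition of a weak $\Q$-Fano variety there exists $k>0$ such that $-kK_X$ is a nef and big Cartier divisor, so a suitable multiple of $-(K_X+0)$ is big and nef. The only subtlety is that Definition~\ref{def:Fano} requires $(X,0)$ to be a Klt pair, i.e.\ requires $X$ to have at worst Klt singularities; this is the customary hypothesis tacitly attached to the weak $\Q$-Fano terminology, and under it the previous step applies directly to conclude.

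There is no real obstacle here: both Proposition~\ref{prop:wlog->log} and Theorem~\ref{thm:FT<->MDS} are quoted as black boxes, and the argument is just a concatenation of implications log weak Fano $\Rightarrow$ Fano type $\Rightarrow$ (MDS + Klt total coordinate space). The only point requiring a remark is the Klt hypothesis on $X$ itself needed in the ``in particular'' statement, which should be made explicit to guarantee that $(X,0)$ indeed qualifies as a log weak Fano pair.
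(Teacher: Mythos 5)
Your proof is correct and follows exactly the route the paper intends: the corollary is stated as an immediate consequence of Proposition~\ref{prop:wlog->log} (log weak Fano plus projective implies Fano type) combined with Theorem~\ref{thm:FT<->MDS}, and the paper gives no further argument. Your remark that the ``in particular'' part tacitly assumes $X$ has Klt singularities (so that $(X,0)$ qualifies as a Klt pair) is a legitimate point the paper leaves implicit, and making it explicit only improves the statement.
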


\begin{corollary}\label{cor:wMDS}
   Let $X$ be a $\Q$-factorial and complete algebraic variety admitting a projective small $\Q$-factorial modification, i.e.
   \begin{equation}\label{proj-sQm}
     \exists\,f,N:\quad\xymatrix{X\ar@{-->}[r]^-f_{\text{\sqm}}&Y\ar@{^(->}[r]&\P^N}\,.
   \end{equation}
   Then the following are equivalent:
   \begin{enumerate}
     \item $X$ is a wMDS and the total space $\overline{X}$ has at worst Klt singularities,
     \item $Y$ is a MDS and the total space $\overline{Y}$ has at worst Klt singularities,
     \item $Y$ is of Fano type,
     \item there exists a boundary divisor $\De$ such that $(Y,\De)$ is a log weak Fano pair.
   \end{enumerate}
    \end{corollary}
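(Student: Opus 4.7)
The plan is to route all four conditions through the projective model $Y$, so that the only genuinely new work is the passage (1) $\Leftrightarrow$ (2) across the \sqm $f:X\dashrightarrow Y$; then (2) $\Leftrightarrow$ (3) becomes a direct citation of Theorem~\ref{thm:FT<->MDS} and (3) $\Leftrightarrow$ (4) combines Proposition~\ref{prop:wlog->log} with the trivial observation that an ample divisor is big and nef. For (1) $\Leftrightarrow$ (2), the key observation is that, by Definition~\ref{def:sQm}, $f$ restricts to a biregular isomorphism on open subsets whose complements have codimension at least $2$; in particular $f$ and $f^{-1}$ induce mutually inverse bijections on prime divisors, hence a canonical isomorphism $f_*:\Cl(X)\stackrel{\cong}{\longrightarrow}\Cl(Y)$, and by normality $H^0(X,\cO_X(D))\cong H^0(Y,\cO_Y(f_*D))$ for every Weil divisor $D\in\Div(X)$. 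These isomorphisms assemble (compatibly with choices \ref{ssez:K} and \ref{ssez:chi} and Definition~\ref{def:CoxRings}) into a graded isomorphism of Cox algebras $\Cox(X)\cong\Cox(Y)$, whence $\overline{X}\cong\overline{Y}$ as affine schemes. Consequently $X$ is a wMDS with Klt total coordinate space iff $Y$ is a wMDS with Klt total coordinate space; and since $Y$ is projective, being a wMDS is the same as being a MDS by Definition~\ref{def:wMDS}.

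For (2) $\Leftrightarrow$ (3) I would simply apply Theorem~\ref{thm:FT<->MDS} to the $\Q$-factorial projective variety $Y$, noting that $\Q$-factoriality of $Y$ is built into the notion of \sqm. For (3) $\Rightarrow$ (4) it suffices to take as witness for log weak Fano the same boundary $\Delta$ exhibiting $Y$ as of Fano type, since $-(K_Y+\Delta)$ ample implies $-(K_Y+\Delta)$ big and nef. For the converse (4) $\Rightarrow$ (3), Proposition~\ref{prop:wlog->log} applied to the projective $Y$ promotes any log weak Fano boundary into a log Fano one, closing the cycle of equivalences.

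The main obstacle, and essentially the only technical point not already cited, is the rigorous verification that a \sqm induces an isomorphism between total coordinate spaces. This hinges on the fact that the Cox algebra of a normal, $\Q$-factorial variety is determined by invariants of its codimension-one open complement: the Weil class group and reflexive global sections, both of which Hartogs-extend from the biregular locus of $f$. This is exactly the ingredient already implicitly invoked in the sketch of Lemma~\ref{lem:sQm} and its necessary direction, so once it is made explicit, the transfer of Klt singularities between $\overline{X}$ and $\overline{Y}$ is automatic, and the statement reduces to the cited structural theorems of Birkar--Cascini--Hacon--McKernan, Gongyo--Okawa--Sannai--Takagi, and Prokhorov--Shokurov.
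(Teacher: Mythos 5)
Your proposal is correct and follows essentially the same route as the paper: the cycle of implications is run through the projective model $Y$, with (1)$\Leftrightarrow$(2) obtained from the fact that a \sqm{} induces isomorphisms $\Cl(X)\cong\Cl(Y)$ and $\Cox(X)\cong\Cox(Y)$ (hence $\overline{X}\cong\overline{Y}$), (2)$\Leftrightarrow$(3) cited from Theorem~\ref{thm:FT<->MDS}, and (3)$\Leftrightarrow$(4) from the trivial direction plus Proposition~\ref{prop:wlog->log}. Your extra detail on why the Cox rings agree (bijection on prime divisors and Hartogs extension of reflexive sections across the codimension-$\geq 2$ loci) only makes explicit what the paper asserts in one line.
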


\begin{proof}
  (1)$\Rightarrow$(2): The \sqm $f$ induces isomorphisms $\Cl(X)\cong\Cl(Y)$ and $\Cox(X)\cong\Cox(Y)$, as it is an isomorphism in codimension 1. Then $Y$ is a projective wMDS, that is a MDS. Then $\overline{X}=\Spec(\Cox(X))\cong\Spec(\Cox(Y))=\overline{Y}$ have the same type of singularities. In particular $\overline{Y}\cong\overline{X}$ admits at worst Klt singularities.

  (2)$\Rightarrow$(3): It is the necessary condition in Theorem~\ref{thm:FT<->MDS}.

  (3)$\Rightarrow$(4): This is obvious by Definition~\ref{def:Fano}.

  (4)$\Rightarrow$(1): By Proposition~\ref{prop:wlog->log}, $Y$ is of Fano type. The sufficient condition in Theorem~\ref{thm:FT<->MDS} implies that $Y$ is a MDS and $\overline{Y}$ has at worst Klt singularities. Finally by same argument applied when proving (1)$\Rightarrow$(2), which is  the necessary condition in Lemma~\ref{lem:sQm},  $X$ turns out to be a wMDS: in particular $\overline{X}$ admits at worst Klt singularities.
\end{proof}

\begin{remark}
  If one of the equivalent conditions listed in Corollary~\ref{cor:wMDS} holds then  hypothesis (\ref{proj-sQm}) can be obtained by asking that
  \begin{equation}\label{mov}
    \exists\,m\in\N\,:\, -mK_X\ \text{is a movable Cartier divisor.}
  \end{equation}

  \noindent In fact, by assuming item (1), there certainly exists a chamber $\g\subseteq\Mov(X)$ such that $[-K_X]\in\g$. By the same argument proving the sufficient condition in Lemma~\ref{lem:sQm}, the choice of $\g$ uniquely determines a MDS $Y$ with a \sqm $f:X\dashrightarrow Y$ such that $\g\subseteq f^*\Nef(Y)$. This is enough to get (\ref{proj-sQm}).
  Moreover, this means that $$[-K_X]=f^*[-K_Y]\in\g\subseteq f^*\Nef(Y)\subseteq f^*\Mov(Y)=\Mov(X)\,.$$
  In particular $-mK_Y$ is a nef Cartier divisor (we are actually running a MMP for $-K_X$). If we now further strengthen (\ref{mov}) to ask that
  \begin{equation}\label{big&mov}
    \exists\,m\in\N\,:\, -mK_X\ \text{is a big and movable Cartier divisor}
  \end{equation}
  then $-mK_Y$ is big and $Y$ turns out to be a projective weak Fano variety. Conversely, if $Y$ in (\ref{proj-sQm}) is assumed to be a weak $\Q$-Fano variety, then Corollary~\ref{cor:proj&wF->MDS} gives that $Y$ is a MDS whose total space $\overline{Y}$ has at worst Klt singularities. This gives item (2) in Corollary~\ref{cor:wMDS} so implying item (1) and (\ref{big&mov}).
  \end{remark}

  This discussion shows that Corollary~\ref{cor:wMDS} can be thought of as a generalization of \cite[Lemma~4.10]{McK}. In particular it proves that

  \begin{proposition}\label{prop:wFsQm}
    The following assertions are equivalent:
    \begin{enumerate}
      \item $X$ is a wMDS whose total space $\overline{X}$ admits at worst Klt singularities and such that $-K_X$ is a big and movable $\Q$-divisor,
      \item $X$ is a $\Q$-factorial and complete algebraic variety admitting a projective weak $\Q$-Fano \sqm $Y$.
    \end{enumerate}
  \end{proposition}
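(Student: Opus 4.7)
The plan is to deduce the proposition essentially as an enhancement of Corollary~\ref{cor:wMDS}: the four equivalent conditions listed there characterize those $X$ admitting some projective sQm $Y$ with $Y$ of Fano type, so I only need to upgrade ``log weak Fano pair'' to ``weak $\Q$-Fano'' under the extra hypothesis that the anticanonical class of $X$ is big and movable. The discussion already sketched in the paragraph following Corollary~\ref{cor:wMDS} provides essentially the road map; I would organize it formally as follows.

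For the implication (1)$\Rightarrow$(2), first I would apply Lemma~\ref{lem:sQm} to obtain the existence of some sQm from $X$ to a MDS, but rather than taking any such model I would choose it by running a $(-K_X)$-MMP via Theorem~\ref{thm:MMP}. Indeed, since $-K_X$ is movable, its class lies in $\overline{\Mov}(X)$, and by Theorem~\ref{thm:MMP}\,(2) this MMP terminates with a $\Q$-factorial complete variety $Y$ which is an sQm $f:X\dashrightarrow Y$ and which can moreover be assumed to be a MDS (in particular projective); by the same theorem the birational transform $-K_Y=f_\ast(-K_X)$ is nef, and some positive multiple is actually semiample and Cartier. Bigness is an invariant of the birational class, so $-K_Y$ remains big, giving that $Y$ is a projective weak $\Q$-Fano variety.

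For (2)$\Rightarrow$(1), I would first apply Corollary~\ref{cor:proj&wF->MDS} to the projective weak $\Q$-Fano variety $Y$ to conclude that $Y$ is a MDS whose total coordinate space $\overline{Y}$ has at worst Klt singularities; then the equivalence (2)$\Leftrightarrow$(1) of Corollary~\ref{cor:wMDS}, applied to the sQm $f:X\dashrightarrow Y$ postulated in assertion (2), yields that $X$ itself is a wMDS with $\overline{X}\cong\overline{Y}$ Klt. It then remains to verify that $-K_X$ is big and movable. Bigness transfers from $-K_Y$ back to $-K_X$ because $f$ is an isomorphism in codimension one and big is a birational invariant for $\Q$-Cartier divisors. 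Movability follows from the fact, exploited throughout \S\ref{sez:wMDS}, that under the canonical isomorphism $f^\ast:N^1(Y)\stackrel{\cong}{\to} N^1(X)$ of Theorem~\ref{thm:complete-emb}\,(3) one has $f^\ast\Nef(Y)\subseteq\overline{\Mov}(X)$, so $[-K_X]=f^\ast[-K_Y]\in\overline{\Mov}(X)$.

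The only genuinely delicate point, which I expect to be the main obstacle to a clean writeup, is checking that $-K_X$ is truly $\Q$-Cartier and that the pull-back equality $[-K_X]=f^\ast[-K_Y]$ makes sense, so that bigness and movability can be legitimately transferred along $f$; this is where one uses that $X$ is $\Q$-factorial and that $f$ is an isomorphism in codimension one, hence identifies $\Cl(X)$ with $\Cl(Y)$ and sends canonical class to canonical class. All other steps amount to careful bookkeeping with Corollary~\ref{cor:wMDS}, Corollary~\ref{cor:proj&wF->MDS}, Lemma~\ref{lem:sQm} and Theorem~\ref{thm:MMP}, with no new ideas required beyond what the paper has already established.
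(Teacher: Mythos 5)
Your proposal is correct and follows essentially the same route as the paper, whose proof is contained in the remark preceding the proposition: for (1)$\Rightarrow$(2) one selects a \sqm to a MDS adapted to $-K_X$ (the paper picks a full-dimensional chamber $\g\subseteq\Mov(X)$ with $[-K_X]\in\g$ and invokes the construction from the proof of Lemma~\ref{lem:sQm}, noting that this is precisely a $(-K_X)$-MMP), and for (2)$\Rightarrow$(1) one combines Corollary~\ref{cor:proj&wF->MDS} with Corollary~\ref{cor:wMDS} and the inclusion $f^*\Nef(Y)\subseteq\overline{\Mov}(X)$. The only caveat is that Theorem~\ref{thm:MMP} as stated guarantees a projective terminal model only when $-K_X$ is not already nef, so in that degenerate case (a complete non-projective $X$ with $-K_X$ nef) you should fall back on the chamber argument, which produces a projective $Y$ with $[-K_Y]\in\Nef(Y)$ uniformly.
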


  \begin{acknowledgements}
First of all I would like to thank Lea Terracini, with whom  several parts of  the present  work have been discussed. Her patience, criticism and encouragement have been an invaluable help in realizing this paper.

\noindent Moreover, I would like to thank Cinzia Casagrande for pointing me out her precious notes \cite{CasagrandeMDS} and for some useful remark.

\noindent Finally, many thanks are due to Antonio Laface for several fruitful hints.
\end{acknowledgements}

\end{document}